\providecommand{\set}[1]{\lbrace #1 \rbrace}
\providecommand{\dom}{{\rm{dom}}}
\DeclareMathOperator{\orb}{orb}
\DeclareMathOperator{\Id}{Id}
\DeclareMathOperator{\col}{coll}
\DeclareMathOperator{\card}{card}
\newtheorem{question}{Question}
\newtheorem{thm}{Theorem}[section]
\newtheorem{definition}[thm]{Definition}
\newtheorem{remark}[thm]{Remark}
\newtheorem{lemma}[thm]{Lemma}
\newtheorem{prop}[thm]{Proposition}
\newtheorem{fact}[thm]{Fact}
\newenvironment{defn}{\begin{definition} \rm}{ \end{definition}}
\renewcommand{\phi}{\varphi}
\DeclareMathOperator{\range}{range}
\newcommand{\rel}[1]{\mathrel{#1}}
\title[Equivalence relations in the Ershov hierarchy]{Classifying equivalence
relations\\ in the Ershov hierarchy}
\author[N.~Bazhenov]{Nikolay Bazhenov}
\address{Sobolev Institute of Mathematics, pr. Akad. Koptyuga 4, Novosibirsk,
630090 Russia;\
Novosibirsk State University, ul. Pirogova 2, Novosibirsk, 630090 Russia
}
\email{bazhenov@math.nsc.ru}
\author[M.~Mustafa]{Manat Mustafa}
\address{Department of Mathematics,
School of Science and Technology, Nazarbayev University,
53, Kabanbay Batyr Avenue,
Astana, 010000, Republic of Kazakhstan}
\email{manat.mustafa@nu.edu.kz}
\author[L.~San Mauro]{Luca San Mauro}
\address{Institute of Discrete Mathematics and Geometry, Vienna University
of Technology, Vienna, Austria}
\email{luca.san.mauro@tuwien.ac.at}
\author[A.~Sorbi]{Andrea Sorbi}
\address{Dipartimento di Ingegneria Informatica e Scienze Matematiche\\
Universit\`a Degli Studi di Siena\\
I-53100 Siena, Italy}
\email{andrea.sorbi@unisi.it}
\author[M.~Yamaleev]{Mars Yamaleev}
\address{Kazan Federal University, 18 Kremlyovskaya str., Kazan, 420008,
Russia}
\email{mars.yamaleev@kpfu.ru}
\urladdr{}
\thanks{
Bazhenov, Mustafa, and Yamaleev were supported by Nazarbayev University Faculty
Development Competitive Research Grants N090118FD5342.
San Mauro was supported by the Austrian Science Fund FWF, project~M~2461.
 Sorbi is a member of INDAM-GNSAGA and his research was supported by
the PSR program of the University of Siena}
\keywords{Computability theory, Ershov hierarchy, $\Delta^0_2$ equivalence
relations, computably enumerable equivalence relations}
\begin{document}

\maketitle

\begin{abstract}
Computably enumerable equivalence relations (ceers) received a lot of
attention in the literature. The standard tool to classify ceers  is
provided by the computable reducibility $\leq_c$. This gives rise to a rich
degree-structure. In this paper, we lift the study of $c$-degrees to the
$\Delta^0_2$ case. In doing so, we rely on the Ershov hierarchy. For any
notation $a$ for a non-zero computable ordinal, we prove several algebraic
properties of the degree-structure induced by $\leq_c$ on the
$\Sigma^{-1}_{a}\smallsetminus \Pi^{-1}_a$ equivalence relations. A special
focus of our work is on the (non)existence of infima and suprema of
$c$-degrees.
\end{abstract}

\section{Introduction}
Computable reducibility is a longstanding notion that allows classifying
equivalence relations on natural numbers   according to  their complexity.

\begin{defn}
Let $R,S$ be equivalence relations with domain $\omega$. $R$ is \emph{computably
reducible} to $S$, denoted $R \leq_c S$, if there is a total computable
function $f$ such that, for all $x,y \in \omega$,
\[
xRy \Leftrightarrow f(x)\rel{S}f(y).
\]
\end{defn}
We write $f: R \leq_c S$ to denote that $f$ is a computable function that
reduces $R$ to $S$; $c$-degrees are introduced in the standard way.

\smallskip

The history of computable reducibility has many roots, being often
rediscovered and explored in connection with different fields. Its study
dates back to the fundamental  work of  Ershov in the theory of numberings,
where the reducibility is introduced in a category-theoretic fashion (see
Ershov's monograph~\cite{Ershov:77} in Russian, or \cite{Ershov:survey} for
an English survey). In the 1980s, computable reducibility proved to be a
fruitful tool for calibrating the complexity of provable equivalence of
formal systems  and scholars focused mostly on the $\Sigma^0_1$ case (see,
e.g., \cite{Visser:80,Montagna-Sorbi:Universal,Bernardi:83}). Following Gao
and Gerdes~\cite{Gao:01}, we adopt the acronym ``ceers'' to refer to
computably enumerable equivalence relations.  The interested reader can
consult Andrews, Badaev, and Sorbi~\cite{andrews2017survey} for a nice and
up-to-date survey on ceers, with a special focus on \emph{universal ceers},
i.e., ceers to which all other ceers are computably reducible. The degree of
universal ceers is by now significantly explored: for instance, in
\cite{Andrews:14} the authors proved that all uniformly effectively
inseparable ceers are universal. A complementary line of research aims at
providing concrete examples of universal ceers. To this end, Nies and Sorbi
\cite{nies2018calibrating} constructed a finitely presented group whose word
problem  is a universal ceer.

Far from being limited to ceers, computable reducibility has been also
applied to equivalence relations of much higher complexity. Fokina, Friedman,
Harizanov, Knight, McCoy, and Montalb\'an~\cite{Fokina:12b} showed that all
$\Sigma^1_1$ equivalence relations are computably reducible to the
isomorphism relations on several classes of computable structures (e.g.,
graphs, trees, torsion abelian groups, fields of characteristic 0 or $p$,
linear orderings). This study was fueled by the observation that computable
reducibility  represents a nice effective counterpart of \emph{Borel
reducibility}, i.e., a key notion of modern descriptive set theory (see
~\cite{friedman1989borel}). The analogy between Borel and computable
reducibility has been explored, for instance, by Coskey, Hamkins, and
Miller~\cite{Coskey:12}, that investigated equivalence relations on c.e. sets
mirroring classical combinatorial equivalence relations of fundamental
importance for Borel theory.

Additional motivation for dealing with computable reducibility comes from the
study  of c.e.\ presentations of structures, as is shown for instance in
\cite{fokina2016linear,gavruskin2014graphs} (for a nice survey about c.e.\
structures, see \cite{Selivanov03}).

\smallskip

The goal of the present paper is to contribute to this vast (yet somehow
unsystematic) research program by making use of computable reducibility to
initiate a throughout classification of the complexity of $\Delta^0_2$
equivalence relations. In this endeavour, we follow and extend the work of
Andrews and Sorbi~\cite{andrewssorbi2}, that provides a very extensive
analysis of the degree structure induced by computable reducibility on ceers.
Ng and Yu~\cite{ngyu} broadened the perspective by discussing some structural
aspects of the $c$-degrees of $n$-c.e., $\omega$-c.e., and
$\Pi^0_1$-equivalence relations. We similarly rely on the Ershov hierarchy to
pursue our analysis.

Although our motivation is rather abstract (and to some extent corresponds to
the desire of exporting the guiding questions of classical degree theory to
the case of equivalence relations), our object of study shall not be regarded
as too much artificial. The following example might convince the reader that
 $\Delta^0_2$ equivalence relations occur quite naturally.

Consider the following $\Pi^0_2$ equivalence relation $R$:
\[
	i \rel{R} j \ \Leftrightarrow\ \card(W_i) = \card(W_j).
\]
Then one can define a ``bounded'' version of $R$:
\[
\langle i,s\rangle \rel{R^b} \langle j,t\rangle \text{~if~and~only~if~}
\card(W_i \cap \{ 0,1,\dots,s\}) = \card (W_j \cap \{ 0,1,\dots,t\}).
\]
It is not hard to show that the relation $R^b$ is $\omega$-c.e. Furthermore,
the relation $R^b$ admits an interpretation via algebraic structures: One can
interpret a number $\langle i,s\rangle$ as the index of a finite linear
ordering. Indeed, define the ordering $\mathcal{L}_{ i,s}$ as follows. The
domain of $\mathcal{L}_{i,s}$ is equal to $W_{i}\cap \{ 0,1,\dots,s\}$, and
the ordering on the domain is induced by the standard ordering of natural
numbers. Notice that here we assume that $\mathcal{L}_{i,s}$ may be empty.
The list $(\mathcal{L}_{i,s})_{i,s\in\omega}$ gives an enumeration of all
finite linear orderings, up to permutations of the domains. It is easy to see
that
\[
\langle i,s\rangle \rel{R^b} \langle j,t\rangle \ \Leftrightarrow
\mathcal{L}_{i,s} \cong \mathcal{L}_{j,t},
\]
thus the relation $R^b$ can be treated as (one of the possible formalizations
of) the relation of isomorphism on the class of finite linear orderings.

\subsection{Organization of the paper}
In Section $2$, we set up the stage by offering some disanalogies between the
degree-structure of ceers and that of $\Delta^0_2$ equivalence relations. We
also prove that infinitely many levels of the Ershov hierarchy contain
minimal $c$-degrees. In Section $3$, we focus on dark degrees, i.e.,
$c$-degrees  not being above the identity on $\omega$: we show that all
levels of the Ershov hierarchy, with the exception of $\Pi^{-1}_1$, contain
dark equivalence relations. Section $4$ and Section $5$ are devoted to the
existence of infima and suprema of $c$-degrees of $\Delta^0_2$-equivalence
relations: we introduce the notion of mutual darkness and prove that, if $R,S
\in \Sigma^{-1}_a\smallsetminus \Pi^{-1}_a$ are mutually dark, then $R,S$
have no infimum in $\Sigma^{-1}_a\smallsetminus \Pi^{-1}_a$ and no supremum
in $\Delta^0_2$. It follows that none of the degree-structures considered in
this paper is neither upper- or a lower-semilattice.

\subsection{Notation and terminology}
All our equivalence relations have domain $\omega$. Given  a number $n$, we
denote by $[x]_R$ its $R$-equivalence class. We say that $R$ is
\emph{infinite} if $R$ has infinitely many equivalence classes (otherwise, it
is of course \emph{finite}). The following basic equivalence relations will
appear many times:

\begin{itemize}
\item $\Id_n$ is the computable equivalence relation consisting of $n$
    equivalence classes, i.e.
\[
x \rel{\Id_n} y \Leftrightarrow x\equiv y \mbox{ (mod }  n),
\]
for all $x,y\in \omega$.
\item $\Id$ is the identity on $\omega$, i.e.,
    $x \rel{\Id} y$ if and only if $x=y$.
\end{itemize}

The following definition is due to Gao and Gerdes~\cite{Gao:01} (but
analogous ways of coding sets of numbers by equivalence relations occur
frequently in the literature, see for instance the definition of a
\emph{set-induced} $c$-degree in \cite{ngyu}).

\begin{defn}
An equivalence relation $R$ is \emph{$n$-dimensional} if there are pairwise
disjoint sets $A_0,\ldots,A_{n-1}\subseteq \omega$ such that
\[
x Ry \Leftrightarrow x=y \lor (\exists i)(x,y \in A_i).
\]
We denote such $R$ by $R_{A_0,\ldots,A_{n-1}}$.

An equivalence relation $R$ is \emph{essentially $n$-dimensional} if it has
exactly $n$ noncomputable equivalence class.
\end{defn}

The next definition appears in \cite{andrewssorbi2} and  reflects a
fundamental distinction in how much information one can effectively extract
from a given equivalence relation $R$.

\begin{defn}\label{defn:light}
An equivalence relation $R$ is \emph{light} if $\Id\leq_c R$; it is
\emph{dark} if it is not light and has infinitely many equivalence classes.
\end{defn}

It is often convenient to think of a given light equivalence relation $R$ in
terms of some computable listing of pairwise nonequivalent numbers witnessing
its lightness. More formally, a \emph{transversal} of an equivalence relation
$R$ with infinitely many $R$-classes is an infinite set $A$ such that
$[x]_R\neq [y]_R$, for all distinct $x,y \in A$. It is immediate to see that
$R$ is light if and only it has a c.e.\ transversal.

\smallskip

Our computability theoretic notions are standard,  see for
instance~\cite{Soare:Book}. The basic notions regarding the Ershov hierarchy
can be found in \cite{ershov-i, ershov-ii, ershov-iii}, see also
\cite{ash2000computable}: in particular, recall the following.

\begin{defn}\label{defn:Ershov}
Let $a$ be a notation for a computable ordinal. A set $A \subseteq \omega$ of
numbers is said to be $\Sigma^{-1}_a$ (or $A \in \Sigma^{-1}_a$) if there are
computable functions $f(z,t)$ and $\gamma(z,t)$ such that, for all $z$,
\begin{enumerate}
  \item\label{1} $A(z)=\lim_t f(z, t)$, with $f(z,0)=0$;
  \item\label{2} $\gamma(z,0) =a$, and
       \begin{enumerate}
           \item $\gamma(z,t+1)\le_\mathcal{O}
               \gamma(z,t)\leq_\mathcal{O} a$;
           \item $f(z,t+1) \ne f(z,t) \Rightarrow \gamma(z,t+1)
               \ne \gamma(z,t)$.
        \end{enumerate}
  \item\label{3} for every $s$ there is at most one $x$ such that $f(x,s+1)
      \ne f(x,s)$.
\end{enumerate}
Item~\ref{3}. usually is not required in the literature, but it is clear
that it can be safely assumed without loss of generality: it implies that for
every $s$ we have $f(x,s)=0$ for cofinitely many $x$. We call the partial
function $\gamma$ the \emph{mind--change function} for $A$, relatively to
$f$.
\end{defn}

A \emph{$\Sigma^{-1}_a$--approximating pair} to a $\Sigma^{-1}_a$--set $A$,
is a pair $\langle f, \gamma\rangle$, where $f$ and $\gamma$ are computable
functions satisfying \ref{1}., \ref{2}., and \ref{3}. above, for $A$. As is
known (see, e.g., \cite{Seliv1985,ospichev2015friedberg} for more details),
one can effectively list all approximations $\langle f_e, \gamma_e \rangle$
to $\Sigma^{-1}_a$-sets. Thus we can refer to the listing $(E_e)_{e \in
\omega}$ of the $\Sigma^{-1}_a$-sets, where $E_e$ is the set of which
$\langle f_e, \gamma_e \rangle$ is a $\Sigma^{-1}_a$--approximating pair.

Dually, we say that a set $A$ is $\Pi^{-1}_a$, if $\overline{A} \in
\Sigma^{-1}_a$, or equivalently there is a \emph{$\Pi^{-1}_a$-approximating
pair} $\langle f, \gamma\rangle$, i.e. a pair  as above but starting with
$f(z,0)=1$. If $\mathcal{X} \in \{\Sigma^{-1}_a: a \in \mathcal{O}\}\cup
\{\Pi^{-1}_a: a \in \mathcal{O}\}$ let us call $\mathcal{X}^d=\{\overline{A}:
A \in \mathcal{X}\}$ the \emph{dual class} of $\mathcal{X}$.

We say that a set $A$ is \emph{properly $\mathcal{X}$} if $A \in \mathcal{X}
\smallsetminus \mathcal{X}^d$.

Since any finite ordinal has only one notation, one usually writes
$\Sigma^{-1}_n$  instead of $\Sigma^{-1}_a$, if $a$ is the notation of $n \in
\omega$. In analogy with the terminology used for sets, we say that $R$ is a
$n$-ceer if $R\in \Sigma^{-1}_n$.

\section{A first comparison with ceers}
At first sight, one might expect that the structural properties of the
$c$-degrees of ceers are reflected smoothly on the upper levels of the Ershov
hierarchy. In this section we show that the parallel is much more delicate.

\subsection{Equivalence relations with finitely many classes}
Recall that any ceer with finitely many classes is computable. Surely, this
is not the case for relations in the Ershov hierarchy:

\begin{lemma}\label{lemma:F_X}
For every non-zero natural number $n$, there is a $\Pi^{-1}_{2n}$ equivalence
relation $R$ such that $R$ is noncomputable and has finitely many equivalence
classes.
\end{lemma}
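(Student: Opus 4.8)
The plan is to build $R$ directly as an equivalence relation with exactly two classes, one of which is a noncomputable $\Pi^{-1}_{2n}$ set, and argue that such an $R$ lives at the level $\Pi^{-1}_{2n}$ of the Ershov hierarchy on relations. Concretely, let $A$ be any set that is properly $\Pi^{-1}_{2n}$ as a set of numbers (such sets exist: the Ershov hierarchy on sets is proper at every finite level, so pick $A \in \Pi^{-1}_{2n} \smallsetminus \Sigma^{-1}_{2n}$, which in particular is noncomputable). Define $R$ by
\[
x \rel{R} y \Leftrightarrow (x \in A \Leftrightarrow y \in A),
\]
so $R$ has exactly the two classes $A$ and $\overline{A}$, and $R$ is noncomputable because from an $R$-decision procedure, testing against a fixed element of $A$ and a fixed element of $\overline{A}$ would decide $A$.

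The first real step is to check that $R \in \Pi^{-1}_{2n}$ as an equivalence relation, i.e. that the set $\{\langle x,y\rangle : x \rel R y\}$ is a $\Pi^{-1}_{2n}$ subset of $\omega$. Here I would use the standard closure properties of the Ershov levels: if $A \in \Pi^{-1}_{2n}$ then $\overline{A} \in \Sigma^{-1}_{2n}$, and $x \rel R y$ iff $(x,y \in A) \lor (x,y \in \overline A)$. Taking a $\Pi^{-1}_{2n}$-approximating pair for $A$ (equivalently a $\Sigma^{-1}_{2n}$-approximating pair for $\overline A$) and running the approximations for $x$ and $y$ in parallel, I get an approximation to $R(\langle x,y\rangle)$ whose mind-changes are bounded by the $2n+2n = $ (appropriately combined) budget; the key arithmetic is that $2n + 2n$, organized correctly via the behavior of the mind-change function on the notation for $2n$, still keeps us inside $\Pi^{-1}_{2n}$. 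The cleanest way to see this is: "$x$ and $y$ are on the same side of $A$" is a Boolean combination of the two $\Pi^{-1}_{2n}$ events $x\in A$, $y\in A$, and $\Pi^{-1}_{2n}$ (for $2n$ a finite level) is closed under finite Boolean combinations in the relevant direction — this is exactly the classical fact that $n$-c.e. and co-$n$-c.e. sets behave well under the finite Boolean operations that preserve the number of mind-changes, so I would cite \cite{ershov-i,ershov-ii,ershov-iii}.

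The second step is to make sure the level is tight, i.e. that $R$ genuinely requires $2n$ and is not already, say, $\Sigma^{-1}_{2n}$ — though re-reading the statement, it only asks for $R \in \Pi^{-1}_{2n}$ together with noncomputability and finitely many classes, so properness is not actually demanded and this step can be dropped; it suffices to take $A$ merely noncomputable and $\Pi^{-1}_{2n}$, e.g. any properly $\Delta^0_2$ set sitting at that level. I expect the main obstacle to be purely bookkeeping: verifying that the combined approximation for the pair $\langle x,y\rangle$ satisfies clause (3) of Definition~\ref{defn:Ershov} (at most one coordinate changes at a time) and that the mind-change function can be defined to start at the notation for $2n$ and decrease appropriately. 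This is handled by dovetailing the two single-coordinate approximations so that at each stage at most one underlying change is processed, and by choosing the mind-change notation for the pair to be (a notation for) $2n$ with the two streams of changes interleaved — since $2n+2n$ worth of changes, suitably interleaved against a fixed element, still collapses to the $\Pi^{-1}_{2n}$ pattern because one of the two coordinates is pinned. So the whole proof reduces to this elementary closure computation, and the substantive content is just the observation that two classes suffice and that the witnessing set lives at level $2n$.
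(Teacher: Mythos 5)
Your overall shape is the same as the paper's (a two--class relation whose classes are a noncomputable set and its complement, noncomputability by comparing against fixed representatives of the two classes), but the key quantitative step is wrong. If $A\in\Pi^{-1}_{2n}$, then for each argument the approximation to $A$ starts at $1$ and changes at most $2n$ times; the agreement value $[A(x)=A(y)]$, which is the characteristic function of your $R$ at $\langle x,y\rangle$, starts at $1$ but can flip once for \emph{every} change of \emph{either} coordinate, so the straightforward count gives at most $2n+2n=4n$ mind--changes, i.e.\ only $R\in\Pi^{-1}_{4n}$. This bound is in general sharp (alternate the changes of $A(x)$ and $A(y)$), and the finite levels of the Ershov hierarchy are \emph{not} closed under the Boolean combination you invoke -- levels add under conjunction/biconditional, which is exactly why the difference hierarchy does not collapse. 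Your remark that the count ``collapses to the $\Pi^{-1}_{2n}$ pattern because one of the two coordinates is pinned'' has no basis: both $x$ and $y$ range over $\omega$ and both approximations move. So, as written, your argument proves the lemma with $\Pi^{-1}_{4n}$ in place of $\Pi^{-1}_{2n}$, which is not what is claimed.

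The repair is to take the witnessing set one level \emph{down}, and this is what the paper does: fix $X\in\Sigma^{-1}_n\smallsetminus\Pi^{-1}_n$ (a properly $\Pi^{-1}_n$ set, or even just a noncomputable c.e.\ set, would do, since the lemma only asks for membership in $\Pi^{-1}_{2n}$) and let $R=F_X$ be the relation with classes $X,\overline X$. Now each coordinate's approximation changes at most $n$ times, the agreement starts at $1$ (both approximations start with the same value), and it flips at most $n+n=2n$ times, so $F_X\in\Pi^{-1}_{2n}$ exactly as required; your dovetailing to respect clause (3) of Definition~\ref{defn:Ershov} and the noncomputability argument then go through unchanged.
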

\begin{proof}
Consider a set $X\in \Sigma^{-1}_n\smallsetminus \Pi^{-1}_n$ and define the
relation
\begin{equation} \label{equ:F_X}
		F_X := \{ (x,y) \,\colon x,y\in X\ \text{or}\ x,y \in \overline{X}\}.
\end{equation}
It is straightforward to check that $F_X$ is a noncomputable equivalence
relation with two equivalence classes and is in $\Pi^{-1}_{2n}$.
\end{proof}

Relations of the form~(\ref{equ:F_X}) already allow us to demonstrate some
simple differences concerning elementary theories:

\begin{prop}
Suppose that $\mathcal{X} \in \{ \Pi^{-1}_a \,\colon |a|_{\mathcal{O}} \geq
2\} \cup \{ \Sigma^{-1}_{a}\,\colon |a|_{\mathcal{O}}\geq 3\}$. Then the
structure of $\mathcal{X}$-equivalence relations is elementarily equivalent
to neither ceers nor co-ceers.
\end{prop}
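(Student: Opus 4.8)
The plan is to exhibit, for each of the listed classes $\mathcal{X}$, a first-order property in the language of equivalence relations (equipped with the reducibility preorder, or perhaps simply in the pure language where the domain consists of $c$-degrees / equivalence relations and $\leq_c$ is a binary relation) that distinguishes $\mathcal{X}$ from both the structure of ceers and the structure of co-ceers. The natural candidate is a statement about the existence of noncomputable elements with finitely many classes. Recall that every ceer with finitely many classes is computable, and dually every co-ceer with finitely many classes is computable; hence in the structure of ceers (or of co-ceers), the class of the computable relation $\Id_1$ — a bottom element, or more carefully the class of all relations $R$ with $R\leq_c\Id_n$ for some $n$ — coincides with the class of relations having finitely many classes. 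By contrast, Lemma~\ref{lemma:F_X} produces, whenever $|a|_{\mathcal{O}}$ is large enough, a noncomputable $\mathcal{X}$-relation with only two classes, namely $F_X$ for a suitable $X$. So the plan is to write a sentence $\psi$ in the language of $\{\leq_c\}$ asserting roughly: ``there exist $R$ and a finite chain $\Id_1 <_c \Id_2 <_c \cdots$ above which sits an element $S$ that is not below any $\Id_n$ in the chain, yet $S$ has the combinatorial shape of a finite equivalence relation.''

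The delicate point is that ``has finitely many classes'' is not literally first-order expressible via $\leq_c$ alone, so I would instead isolate the finitely-many-classes relations order-theoretically. In the structure of ceers, the finite ceers are exactly $\Id_1,\Id_2,\dots$ and these form a definable initial segment: $R$ is (equivalent to) a finite ceer iff the interval $\{T : T\leq_c R\}$ is finite and linearly ordered (indeed each $\Id_n$ has exactly $n$ predecessors up to $\equiv_c$, forming a chain $\Id_1<_c\cdots<_c\Id_n$). The first step is therefore to verify that in the ceers and in the co-ceers, every $\equiv_c$-class with finitely many, linearly ordered predecessors is the class of some $\Id_n$, hence computable. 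The second step is to pick, for $\mathcal{X}=\Pi^{-1}_a$ with $|a|_{\mathcal{O}}\geq 2$, the relation $F_X$ from Lemma~\ref{lemma:F_X} with $X\in\Sigma^{-1}_1\smallsetminus\Pi^{-1}_1$ (so $F_X\in\Pi^{-1}_2\subseteq\Pi^{-1}_a$), and for $\mathcal{X}=\Sigma^{-1}_a$ with $|a|_{\mathcal{O}}\geq 3$, the relation $F_X$ with $X\in\Sigma^{-1}_1\smallsetminus\Pi^{-1}_1$ again, noting $F_X\in\Pi^{-1}_2\subseteq\Sigma^{-1}_3\subseteq\Sigma^{-1}_a$; in each case $F_X$ is noncomputable with exactly two classes. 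The third step is to check that inside $\mathcal{X}$ the downward cone of $F_X$ is still finite and linearly ordered — its only strict predecessor is $\Id_1$, since any $S<_c F_X$ with $S$ noncomputable would have to have two noncomputable-looking classes, contradicting $S<_c F_X$, so the cone is $\{\Id_1, F_X\}$ — yet $F_X\not\equiv_c\Id_2$ because $F_X$ is noncomputable. Hence $\mathcal{X}$ satisfies ``$\exists S$ with exactly one strict predecessor which is not $\equiv_c$ to the computable relation with two classes,'' a sentence failing in both ceers and co-ceers.

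The main obstacle I anticipate is making ``$\equiv_c$ to $\Id_2$'' — equivalently ``computable with two classes'' — first-order in $\leq_c$ without circularity. The way around it: $\Id_n$ is characterized up to $\equiv_c$ as the unique relation having exactly $n-1$ strict predecessors (all forming a chain), among relations whose cone of predecessors is finite; and being computable is then ``$\equiv_c$ to some $\Id_n$.'' So I would express the distinguishing sentence as: there is an element $S$ whose set of $\leq_c$-predecessors (mod $\equiv_c$) is finite and linearly ordered and has exactly two elements, but $S$ is not a maximum of any such $n$-element chain that is ``complete'' in the sense of being closed downward — the subtlety being that in ceers/co-ceers the unique $2$-element chain-cone relation is $\Id_2$, which is itself top of that cone, whereas $F_X$ is strictly above $\Id_2$... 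Actually cleaner: in ceers, any relation with exactly one strict predecessor $\equiv_c \Id_1$ must itself be $\equiv_c \Id_2$ (two classes, computable). So the sentence ``$\exists S\, \exists S'$ with $S$ the only strict predecessor of $S'$, $S$ is a minimum, and $\exists T$ with $S <_c T <_c$ nothing-above-$S'$...'' — I'll phrase it as: \emph{there exist incomparable $\equiv_c$-classes $A, B$ both with downward-cone of size exactly $2$}, which fails in ceers/co-ceers (there is only one such class, $\Id_2$) but holds in $\mathcal{X}$ (take $\Id_2$ and $F_X$). Verifying this incomparability — that $\Id_2\not\leq_c F_X$ (fails since actually $\Id_2\leq_c F_X$!) — forces a correction: instead use two \emph{different} noncomputable two-class relations $F_X, F_Y$ with $X\leq_c$-incomparable... this is the genuinely technical heart, and I expect it to require Lemma~\ref{lemma:F_X} together with a pair of $\leq_c$-incomparable sets in $\Sigma^{-1}_n\smallsetminus\Pi^{-1}_n$, whose existence is standard, to produce $F_X,F_Y$ that are $\leq_c$-incomparable in $\mathcal{X}$ while each has a two-element downward cone, thereby witnessing a first-order sentence true in $\mathcal{X}$ and false in both ceers and co-ceers.
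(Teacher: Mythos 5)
Your overall strategy --- find a first-order sentence about the $\leq_c$-ordering that is witnessed by the noncomputable two-class relations $F_X$ of Lemma~\ref{lemma:F_X} and cannot be witnessed among ceers or co-ceers --- is the same as the paper's, but the proposal stops exactly where the real work lies, and the final fix you sketch does not close the gap. First, the sentence you settle on is not verified (and in one formulation is false): every relation with at least two classes lies $\leq_c$-above $\Id_2$ via a constant-style reduction, so $\Id_1 <_c \Id_2 <_c F_X$ and $F_X$ never has a ``two-element downward cone''; in fact, in \emph{all three} structures the unique degree with exactly one strict predecessor is $\Id_2$, so your intermediate candidate sentence distinguishes nothing. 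If what you mean is ``exactly two strict predecessors'' (i.e., $F_X$ \emph{covers} $\Id_2$ and nothing else sits below), then the crucial point is precisely to guarantee that no relation lies strictly between $\Id_2$ and $F_X$, and taking an arbitrary $X\in\Sigma^{-1}_1\smallsetminus\Pi^{-1}_1$, or a pair of $\leq_c$-incomparable such sets, gives no control over this: any noncomputable $V\leq_m X$ with $F_V <_c F_X$ produces an intermediate degree, and a priori the cone below $F_X$ need not even be finite. So ``incomparable sets exist, which is standard'' does not deliver the property your sentence asserts.

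The paper's missing ingredient is a c.e.\ set $W$ of \emph{minimal $m$-degree}: if $\Id_2\leq_c E\leq_c F_W$ then $E=F_V$ for the c.e.\ set $V=g^{-1}(W)\leq_m W$, and minimality forces $V$ computable or $V\equiv_m W$, hence $E\equiv_c\Id_2$ or $E\equiv_c F_W$. Thus $F_W$ covers $\Id_2$, and the distinguishing sentence is: $\Id_1$ is least, $\Id_2$ is the unique minimal degree above it, and above $\Id_2$ there are \emph{two incomparable covers} --- namely $\Id_3$ and $F_W$ (incomparable because $F_W$ has only two classes and is noncomputable) --- whereas in ceers and in co-ceers every relation with at least three classes bounds $\Id_3$ and every relation with finitely many classes is computable, so $\Id_3$ is the only cover of $\Id_2$ there. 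Note also that the paper needs only one suitably minimal set, using $\Id_3$ as the second incomparable witness, rather than a pair $F_X,F_Y$, each of which would separately need the covering property. Your proposal identifies the right kind of sentence but lacks the minimality argument that makes it true in $\mathcal{X}$, which is the heart of the proof.
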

\begin{proof}
Fix a c.e.\ set $W$ such that $\deg_m(W)$ is a minimal $m$-degree. Note that
$\Id_2 <_c F_W$. Suppose that $E$ is an equivalence relation such that
$\Id_2\leq_c E\leq_c F_W$. Then $E$ is equal to $F_{V}$ for some c.e. set $V$
and the minimality of $\deg_m(W)$ implies that either $V$ is computable or
$V\equiv_m W$. Thus, we have $E\equiv_c \Id_2$ or $E\equiv_c F_{W}$. 	

Hence, the desired elementary difference can be witnessed by the following
argument:
\begin{enumerate}	
    \item The ($c$-degree of the) relation $\Id_1$ is the least element
        under computable reducibility (in ceers, co-ceers, and
        $\mathcal{X}$).
		
	\item $\Id_2$ is the unique minimal $c$-degree over $\Id_1$ (in ceers,
co-ceers, and $\mathcal{X}$).
		
    \item Inside $\mathcal{X}$, one can find two incomparable elements
        $x_0$ and $x_1$ (namely, the $c$-degrees of $\Id_3$ and $F_W$) such
        that $\Id_2 < x_i$ and $\neg \exists z ( \Id_2 < z < x_i )$. Note
        that this property fails for ceers and co-ceers.
\end{enumerate}
\end{proof}

\subsection{Finite minimality}
We move now to equivalence relations with infinitely many equivalence classes.

Notice that in the context of ceers, explored in \cite{andrewssorbi2},
equivalence relations with finitely many classes are computable, whereas this
is not so in higher levels of the Ershov hierarchy as witnessed by the
two-classes equivalence relations of the form $F_X$ introduced in the proof
of Lemma~\ref{lemma:F_X}. This suggests the following notion of minimality
(called finite minimality) when we work in the Ershov hierarchy.

\begin{defn}
An equivalence relation $R$ is \emph{finitely minimal} if $S<_c R$ implies that
$S$ has only finitely many equivalence classes.
\end{defn}

Observe also that if $R$ is light and $R \not\leq_c \Id$, then $R$ is not
finitely minimal.

The previous definition does perfect justice to the notion of minimality for
dark equivalence relations, as it is easy to see (see \cite{andrewssorbi2}
where the property is stated for ceers, but it clearly holds of all
equivalence relations) that if $E$ is dark, $R \leq_c E$ and $R$ has
infinitely many equivalence classes, then $R$ is dark as well. So a finitely
minimal dark equivalence relation is exactly a minimal dark equivalence
relation, i.e. a dark equivalence relation for which there is no dark
equivalence relation strictly below it.

One of the tools that will be useful to us is the \emph{collapse technique},
extensively used for ceers in \cite{andrewssorbi2}. If $R$ is an equivalence
relation and $x \cancel{\rel{R}} y$, then the \emph{collapse} $R_{\col(x,y)}$
(denoted by $R_{/(x,y)}$ in \cite{andrewssorbi2}) is defined as
\[
R_{\col(x,y)} := R \cup \{ (u,v) \,\colon u,v \in [x]_R \cup [y]_R\}.
\]
We illustrate the technique by obtaining the next two results about finite
minimality.

\begin{lemma}\label{lemma:finite-minimal}
Suppose that $R$ is dark with a computable class. Then $R$ cannot be finitely
minimal.
\end{lemma}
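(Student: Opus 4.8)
The plan is to take a dark equivalence relation $R$ with a computable equivalence class $[c]_R$ and to produce a strictly smaller equivalence relation $S$ that still has infinitely many equivalence classes, thereby contradicting finite minimality. The natural candidate is to \emph{split} the computable class $[c]_R$ into two infinite computable halves. Since $[c]_R$ is a computable infinite set, we can write $[c]_R = C_0 \sqcup C_1$ with $C_0, C_1$ both infinite and computable. Define $S$ to be the equivalence relation obtained from $R$ by replacing the single class $[c]_R$ with the two classes $C_0$ and $C_1$, and leaving all other $R$-classes untouched: formally, $x \rel{S} y$ iff ($x \rel{R} y$ and not both $x,y \in [c]_R$) or ($x, y \in C_0$) or ($x,y \in C_1$). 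Then $R = S_{\col(a,b)}$ for any $a \in C_0$, $b \in C_1$, so $S$ is, morally, one collapse below $R$.

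First I would check that $S$ lies in the right complexity class: since $C_0, C_1, [c]_R$ are all computable, $S$ differs from $R$ only by a computable modification, so $S \in \Sigma^{-1}_a$ whenever $R$ is (indeed $S$ is computable in $R$ uniformly, and the Ershov-hierarchy level is preserved under such finite computable surgery). Next, $S \leq_c R$ via the identity-like map that sends $C_1$ into $[c]_R$ in a way collapsing it with $C_0$: more precisely, pick a computable injection $g$ with $g \restriction (\omega \setminus C_1) = \mathrm{id}$ and $g(C_1) \subseteq C_0$ arranged so that $g$ reduces $S$ to $R$; since $S$-classes other than $C_0, C_1$ are exactly $R$-classes, and $g$ merges $C_0, C_1$ into $C_0 \subseteq [c]_R$, one verifies $x \rel{S} y \Leftrightarrow g(x) \rel{R} g(y)$. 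Also $S$ has infinitely many classes (it has one more class than $R$ among the computable-class region, and $R$ already had infinitely many since it is dark).

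The crux is to show $R \not\leq_c S$, i.e. the reduction is \emph{strict}. Suppose toward a contradiction $h \colon R \leq_c S$. Since $R$ is dark and $S \leq_c R$, $S$ is dark as well (using the fact quoted in the excerpt that darkness is inherited downward among relations with infinitely many classes). Now consider the $R$-class $[c]_R$: under $h$ it maps into a single $S$-class, which is either one of $C_0, C_1$ (both infinite computable sets) or some other $S$-class $[z]_S = [z]_R$. In the first case, $h^{-1}$ of an infinite computable set being a single $R$-class, composed appropriately, would let us build a c.e.\ transversal of $R$ — contradicting darkness of $R$; here I would use that $h$ restricted to $\overline{[c]_R}$ is still a reduction and that the $S$-classes $C_0, C_1$ are computable, so membership is decidable, giving an effective way to pick out infinitely many pairwise $R$-inequivalent points. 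In the second case $h$ would actually witness $R \leq_c R$ in a way that never uses the splitting, and a counting/pigeonhole argument on the computable class shows this forces $[c]_R$ to be split, again a contradiction — more carefully, one shows that no reduction from $R$ can avoid landing the class $[c]_R$ inside $C_0 \cup C_1$, because the $S$-structure outside $C_0, C_1$ is isomorphic to $R$-structure outside $[c]_R$ and the single extra merge cannot be undone computably on a dark relation.

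The main obstacle I anticipate is precisely this last step — ruling out $R \leq_c S$ cleanly. The subtlety is that $S$ is not literally ``$R$ with a class split'': a clever reduction $h$ from $R$ to $S$ might exploit other computable $R$-classes (if $R$ has more than one) or route things around. I expect the clean way to close this is to invoke the collapse machinery of \cite{andrewssorbi2}: since $R = S_{\col(a,b)}$ is obtained from $S$ by a single collapse of two $S$-inequivalent elements, and $S$ is dark, the standard lemma on collapses of dark ceers (which, as the excerpt emphasizes, holds for all equivalence relations, not just ceers) gives $S <_c S_{\col(a,b)} = R$ — i.e. a single collapse of a dark relation strictly increases the $c$-degree. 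That lemma, applied here, is exactly what yields strictness, so the real content of the proof is reducing to it: verifying that $a \in C_0$ and $b \in C_1$ are $S$-inequivalent (clear by construction), that $S$ is dark (from $S \leq_c R$ and darkness inheritance), and that $S$ has infinitely many classes. With those in hand, $S <_c R$ and $S$ has infinitely many classes, so $R$ is not finitely minimal.
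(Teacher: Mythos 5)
Your construction runs in the wrong direction. Splitting the computable class $[c]_R$ into two computable pieces $C_0,C_1$ produces a relation $S$ that sits strictly \emph{above} $R$, not below it: since $C_0,C_1,[c]_R$ are computable, one checks $S\equiv_c R\oplus\Id_1$, and darkness of $R$ gives self-fullness, hence $R<_c R\oplus\Id_1\equiv_c S$ (this is exactly Lemma~\ref{lem:singleton} together with the $R_{[z]}$ computation in Section~4). Concretely, your claimed reduction $g\colon S\leq_c R$ is not a reduction: for $x\in C_0$ and $y\in C_1$ we have $x\cancel{\rel{S}}y$, yet $g(x)=x$ and $g(y)$ both lie in $[c]_R$, so $g(x)\rel{R}g(y)$; any map that ``merges $C_0,C_1$ back'' identifies $S$-inequivalent elements, and in fact no reduction $S\leq_c R$ exists at all. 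Likewise the ``standard lemma'' you invoke at the end --- that a single collapse of a dark relation strictly increases the $c$-degree, i.e.\ $S<_c S_{\col(a,b)}$ --- does not exist in \cite{andrewssorbi2} and is false in precisely your situation: when the collapsed class is computable and the relation is dark, the collapse strictly \emph{decreases} the degree, which is the content of the very lemma you are proving. A smaller, separate problem: you assume the computable class is infinite, but a computable class may be finite (even a singleton) --- indeed the dark relations built in this paper have all classes finite --- so the split into two infinite halves need not even be available.

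The correct move is to collapse rather than split. Take $a,b$ with $[a]_R\neq[b]_R$ and $[b]_R$ computable, and set $S:=R_{\col(a,b)}$; then $S\leq_c R$ via the computable map $f$ which is the identity off $[b]_R$ and sends all of $[b]_R$ to $a$ (this is the one place where computability of the class is used), and $S$ still has infinitely many classes because $R$, being dark, has infinitely many. Strictness comes from an orbit argument: if $g\colon R\leq_c S$, then $h:=f\circ g$ reduces $R$ to $R$ and $\range(f)$ misses $[b]_R$, so the orbit $\orb_h(b)=\{h^k(b)\,\colon k\in\omega\}$ consists of pairwise $R$-inequivalent numbers and is an infinite c.e.\ transversal of $R$, contradicting darkness. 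Hence $S<_c R$, and $R$ is not finitely minimal.
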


\begin{proof}
Suppose that classes $[a]_R$ and $[b]_R$ are distinct, and $[b]_R$ is
computable. Consider the collapse $S := R_{\col(a,b)}$. Then (by essentially
the same argument as in the proof of \cite[Lemma~2.6]{andrewssorbi2}) $S$ is
reducible to $R$ by the function
\[
		f(x) = \left\{
			\begin{array}{ll}
				x, & \text{if~} x\not\in [b]_R,\\
				a, & \text{otherwise}.
			\end{array}
		\right.
\]
Assume that $g\colon R\leq_c S$. Consider the map $h := f\circ g$ and
the \emph{$h$-orbit} of $b$, i.e. the set
\[
\orb_h(b) = \{ h^k(b) \,\colon k\in\omega\}.
\]
It is easy to see that $h\colon R\leq_c R$. We claim that the orbit $\orb_h(b)$
consists of pairwise non-$R$-equivalent elements: indeed, if $h^k(b) \rel{R}
h^l(b)$ for  $k<l$, then we have $b \rel{R} h^{l-k}(b)$, and
$\range(f) \cap [b]_R\neq \emptyset$, which contradicts the choice of the map
$f$. Hence, $\orb_h(b)$ is an infinite c.e. transversal of $R$, and we obtain
a contradiction with the darkness of $R$. Therefore, we deduce that $S<_c R$
and $R$ is not finitely minimal.
\end{proof}

Now we show that there are infinitely many levels of the Ershov hierarchy
which contain finitely minimal, dark equivalence relations \emph{properly}
belonging to the level:

\begin{thm}\label{theo:dark-minimal}
Suppose that $a\in \mathcal{O}$ and $|a|_{\mathcal{O}} \geq 1$. Then there
exists a finitely minimal, dark equivalence relation  $R \in \Delta^0_2
\smallsetminus \Sigma^{-1}_a$.
\end{thm}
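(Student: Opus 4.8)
The plan is to build a dark, finitely minimal equivalence relation $R$ by a direct priority construction, ensuring simultaneously that $R \notin \Sigma^{-1}_a$. Darkness means we must diagonalize against all potential c.e.\ transversals; finite minimality means we must ensure that any equivalence relation $S$ with infinitely many classes that reduces to $R$ in fact reduces back, so that $S \equiv_c R$; and the non-membership in $\Sigma^{-1}_a$ asks that no $\Sigma^{-1}_a$-approximating pair $\langle f_e, \gamma_e\rangle$ from the listing $(E_e)_{e\in\omega}$ can correctly compute (the characteristic function of) $R$ viewed as a subset of $\omega$ under a standard pairing. First I would set up $R$ as the limit of a computable sequence of finite equivalence relations $R_s$, all of whose classes will eventually be built to be of a uniform, highly symmetric shape — the natural template is to make $R$ resemble an ``$\Id$-like'' relation with all classes computable except for a controlled coding region, or, following the dark-minimal constructions in \cite{andrewssorbi2}, to make $R$ have the self-reducibility property that forces minimality: every infinite-class $S \leq_c R$ can be pulled back using the structural homogeneity of $R$.

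The key steps, in order, would be: (1) Fix the three families of requirements — $\mathcal{T}_i$ (the $i$-th c.e.\ set $W_i$ is not a transversal of $R$), $\mathcal{M}_j$ (if $\varphi_j : S_j \leq_c R$ with $S_j$ having infinitely many classes then we produce a reduction $R \leq_c S_j$), and $\mathcal{N}_e$ ($E_e \neq R$ as sets) — and arrange them in a priority list. (2) For $\mathcal{N}_e$, reserve a fresh pair of numbers and exploit the mind-change counting: a $\Sigma^{-1}_a$-approximation to $R(z)$ can change value on $z$ only finitely often with mind-changes descending through $\mathcal{O}$ below $a$, so if $|a|_{\mathcal{O}} \geq 1$ we can force more mind-changes than the approximation $\langle f_e,\gamma_e\rangle$ is permitted — this is exactly the kind of argument that shows $\Delta^0_2 \smallsetminus \Sigma^{-1}_a \neq \emptyset$ for every notation $a$, and it is implemented here by toggling membership of the reserved pair into/out of a single $R$-class. (3) For darkness, whenever $W_i$ enumerates a fresh candidate element, wait and then collapse it with an already-present class, killing the transversal — the collapse technique from the excerpt is the right tool and it is compatible with keeping $R$ in $\Delta^0_2$. (4) For finite minimality, use that $R$ is dark (so any infinite-class $S \leq_c R$ is automatically dark by the cited fact), together with the homogeneous structure of $R$, to run the orbit argument of Lemma~\ref{lemma:finite-minimal}-style in reverse: a reduction $\varphi_j : S_j \leq_c R$ can be inverted on the homogeneous part because $R$ has no computable classes that could ``absorb'' infinitely much of $S_j$ — here I would design $R$ to have \emph{no} computable classes at all among its infinite-class portion, precisely so that Lemma~\ref{lemma:finite-minimal} does not apply and minimality is not obstructed.

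The main obstacle, as usual in such constructions, is the interaction between the $\mathcal{N}_e$-requirements (which want to keep moving a reserved pair around, generating mind-changes) and the $\mathcal{M}_j$-requirements (which need long stretches of stability in $R$ to build and verify a reduction back from $R$), all while the $\mathcal{T}_i$-collapses are permanently coarsening $R$. The resolution I would pursue is the standard one: give the $\mathcal{N}_e$ their own disjoint coding columns so their activity is invisible to the structural part governed by $\mathcal{M}_j$, let each $\mathcal{M}_j$ act only on numbers of higher priority and treat lower-priority coding as ``noise'' that its reduction simply maps into a garbage class, and use a finite-injury layout so that each requirement is eventually left alone. A subtlety worth flagging is bookkeeping the ordinal notations: each $\mathcal{N}_e$ must respect that its own toggling is itself $\Delta^0_2$ but may need to live arbitrarily high in the Ershov hierarchy (it will, since the union must land outside $\Sigma^{-1}_a$ — indeed outside \emph{every} $\Sigma^{-1}_b$ would be too much, but landing outside the single fixed $\Sigma^{-1}_a$ only requires one column generating more than $|a|_\mathcal{O}$-many mind-changes in the appropriate descending-notation sense), so the construction of that column is a small sub-construction in its own right, analogous to the classical proof that the Ershov hierarchy is proper.
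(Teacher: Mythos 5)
There is a genuine gap, and it lies in the finite-minimality part of your plan. Your $\mathcal{M}_j$-requirements are stated as goals, but no workable strategy is given for them. Finite minimality needs two specific ingredients that the sketch does not supply: (i) a structural \emph{hitting} property --- every c.e.\ set that intersects infinitely many classes intersects \emph{every} class (or at least enough classes to invert any reduction) --- which is exactly what the finitely minimal dark ceers of \cite[Thm.~3.3]{andrewssorbi2} provide, and which is itself the outcome of a substantial construction; it is not a by-product of ``homogeneity,'' and merely arranging that $R$ has no computable classes only removes the obstruction of Lemma~\ref{lemma:finite-minimal}, it does not produce minimality; and (ii) \emph{effectivity of the inverse}: given $g\colon Q\leq_c R$ with $Q$ infinite and $\range(g)$ meeting every class, one must produce a \emph{computable} $h$ with $g(h(x))\rel{R}x$, and when $R$ is only $\Delta^0_2$ the search ``find $y$ with $g(y)\rel{R}x$'' is not effective. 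The paper's proof solves both problems at once by taking $R$ to be a coarsening $R\supseteq S$ of an Andrews--Sorbi finitely minimal dark \emph{ceer} $S$ with the hitting property, so that $\range(g)$ meets every $S$-class and the inverse $h$ is computed by searching the c.e.\ approximation of $S$; a direct priority construction of a merely-$\Delta^0_2$ relation, as you propose, offers no substitute for this c.e.\ skeleton, and completing your route would essentially amount to re-proving the Andrews--Sorbi theorem in a $\Delta^0_2$ setting plus solving the inversion-effectivity problem.

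Two further points. First, your resolution of the $\mathcal{N}_e$/$\mathcal{M}_j$ conflict via ``disjoint coding columns invisible to the structural part'' is self-defeating: if the $\mathcal{N}_e$-witness pairs are picked computably in columns the rest of the construction never touches, then their first coordinates form an infinite c.e.\ set of pairwise non-$R$-equivalent numbers, i.e.\ a transversal, contradicting darkness; allowing the $\mathcal{T}_i$-collapses to invade the coding columns reintroduces precisely the interference you were trying to quarantine. Second, the toggling in your step (2) is unnecessary and complicates matters: the paper's construction is $\mathbf{0}'$-effective, so for each $E_k$ it simply consults the \emph{limit} value of $E_k$ on a pair of representatives of distinct $S$-classes and either collapses them or refrains, once and for all; this keeps $R$ a coarsening of $S$ (each $R$-class a union of at most two $S$-classes), which is what makes darkness, the hitting property, and hence finite minimality carry over for free, while $R\neq E_k$ for all $k$ gives $R\notin\Sigma^{-1}_a$.
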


\begin{proof}
In the proof of \cite[Thm. 3.3]{andrewssorbi2}, Andrews and Sorbi constructed
infinitely many pairwise incomparable, finitely minimal, dark ceers $S_l$,
$l\in \omega$, with the following property: for any $e$ and $l$, if the c.e.
set $W_e$ intersects infinitely many $S_l$-classes, then it intersects
\emph{every} $S_l$-class. We choose only one such a ceer $S := S_0$. 	As
explained in the introduction, fix a $\Sigma^{-1}_a$ list
$(E_e)_{e\in\omega}$ of all $\Sigma^{-1}_a$ sets
$(E_e)_{e\in\omega}$. We build a $\Delta^0_2$ equivalence relation $R$ with
the following properties:
\begin{itemize}
\item $R\supseteq S$, and
		
\item $R\neq E_e$, for every $e\in\omega$.
\end{itemize}
	The construction proceeds in a straightforward $\mathbf{0}'$-effective
manner. We choose a $\mathbf{0}'$-effective list $\{ x_e\}_{e\in\omega}$
which enumerates representatives of all $S$-clas\-ses, without repetitions
(i.e. $x_i \cancel{S} x_j$ for $i\neq j$). We start with $R[0]=S$, i.e. $S$
is the approximation $R[0]$ of $R$ at stage $0$.

Consider stage $k$. If $x_{2k} \cancel{E_k} x_{2k+1}$, then $R[k+1]$ is equal
to the collapse $R[k]_{\col(x_{2k},x_{2k+1})}$. Otherwise, $R[k+1] = R[k]$.

As per usual, set $R = \bigcup_{k\in\omega} R[k]$. First notice that $R$ is
$\Delta^0_2$: to see if $x \rel{R} y$ use oracle $\mathbf{0}'$ to search for
the unique $h,k$ such that $x \rel{S} x_h$ and $y \rel{S} x_k$, and then
check if $x_h=x_k$ or $x_h$ and $x_k$ have been collapsed at stage $\max(h,k)$.
Finally, it is not hard to show that $R$ has infinitely many classes, and
$R\not\in \Sigma^{-1}_a$.

Assume now that $f\colon Id\leq_c R$. Since $R\supseteq S$, the map $f$ is
also a reduction from $Id$ to $S$, which contradicts the darkness of $S$.
Thus, $R$ is also dark.

Suppose that $g\colon Q\leq_c R$ and $Q$ has infinitely many classes. Since
$Q$ contains infinitely many classes, the set $W := \range(g)$ is a c.e. set
which intersects infinitely many $R$-classes. Recall that $R\supseteq S$,
hence, $W$ intersects infinitely many $S$-classes. The choice of the ceer $S$
implies that $W$ intersects every $S$-class.

Now we build a map $h$ as follows: Fix an effective approximation $\{
S[t]\}_{t\in\omega}$ of the ceer $S$ and for a number $x$, define $h(x)$ to
be the first seen $y$ such that $g(y) \rel{S} x$: more formally,
\begin{gather*}
	t(x) := \mu t [ (\exists y\leq t) ( (x, g(y)) \in S[t] ) ],\\
	h(x) := \mu y [(x, g(y)) \in S[t(x)]].
\end{gather*}
Since $\range(g)$ intersects every $S$-class, $h$ is a total computable
function. We show that $h\colon R\leq_c Q$. Note that for any $x,y\in
\omega$, we have $x \rel{S} gh(x)$, and the following conditions are
equivalent:
\[
	h(x) \rel{Q} h(y)  \Leftrightarrow g(h(x)) \rel{R} g(h(y))
\Leftrightarrow\ x \rel{R} y.
\]
Thus, we have $Q\equiv_c R$, and $R$ is finitely minimal.
Theorem~\ref{theo:dark-minimal} is proved.
\end{proof}

Note that in the relation $R$ from the theorem above, every $R$-class is a
c.e. set. Thus, $R$ has the following curious property: If $Q\leq_c R$ and
$Q$ has only finitely many classes, then $Q\equiv_c Id_n$ for some
$n\in\omega$.

\subsection{Failure of Inversion Lemma}
A fundamental technique when studying ceers is provided by the following
result: see for instance \cite[Lemma~1.1]{andrewssorbi2}.

\begin{lemma}[Inversion Lemma]
Suppose that $R,S$ are ceers and $R\leq_c S$ via $f$. If $f$ hits all the
equivalence classes of $S$ (i.e. $\range(f)$ intersects with \emph{every}
$S$-class), then $S\leq_c R$.
\end{lemma}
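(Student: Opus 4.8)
The plan is to construct a reduction $g \colon S \leq_c R$ by inverting $f$ "up to $S$-equivalence". First I would fix a computable enumeration $(S[t])_{t\in\omega}$ of the c.e.\ relation $S$. Then, for each $u$, I search for the first witness that $\range(f)$ meets the $S$-class of $u$: put
\[
t(u) := \mu t\, [\, (\exists x \leq t)\, ((u, f(x)) \in S[t])\, ], \qquad g(u) := \mu x\, [\, (u, f(x)) \in S[t(u)]\, ].
\]
Since $\range(f)$ intersects every $S$-class, for every $u$ there is some $x$ with $f(x) \rel{S} u$, so the outer search halts; hence $g$ is a total computable function, and by construction $f(g(u)) \rel{S} u$ for all $u$.

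It then remains to check that $g$ reduces $S$ to $R$. Fix $u, v \in \omega$. Using $f(g(u)) \rel{S} u$ and $f(g(v)) \rel{S} v$ together with symmetry and transitivity of $S$, one gets $u \rel{S} v \Leftrightarrow f(g(u)) \rel{S} f(g(v))$. Since $f \colon R \leq_c S$, the right-hand side is in turn equivalent to $g(u) \rel{R} g(v)$. Chaining these,
\[
u \rel{S} v \ \Leftrightarrow\ f(g(u)) \rel{S} f(g(v)) \ \Leftrightarrow\ g(u) \rel{R} g(v),
\]
so $g \colon S \leq_c R$, as desired.

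The argument is essentially routine; the only point deserving attention is the totality of $g$, and this is exactly where the two hypotheses enter: surjectivity of $f$ onto $S$-classes guarantees that a witness exists, while the computable enumerability of $S$ guarantees that the search for it is effective. It is precisely this last feature that is lost once $S$ is allowed to lie properly higher in the Ershov hierarchy, which is what we will exploit afterwards to show that the Inversion Lemma fails there.
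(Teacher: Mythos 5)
Your proof is correct, and it is exactly the standard inversion argument the paper relies on: the same search over an enumeration $(S[t])_{t\in\omega}$ (with the auxiliary stage $t(u)$ and the minimization defining $g$) appears verbatim in the proof of Theorem~\ref{theo:dark-minimal}, and the lemma itself is cited to \cite[Lemma~1.1]{andrewssorbi2}, whose proof proceeds the same way. Your closing remark correctly identifies that effectivity of the search is precisely what breaks down above the c.e.\ level, which is the point of the subsequent counterexample.
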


The Inversion Lemma does not hold, in general, for the Ershov hierarchy. In
fact, it fails already for $2$-ceers:

\begin{lemma}
There is an equivalence relation $R\in \Sigma^{-1}_{2} \smallsetminus
\Sigma^{-1}_{1}$ such that  $Id\leq_c R$ via a function $f$ which is
surjective on the equivalence classes of $R$.
\end{lemma}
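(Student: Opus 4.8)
The plan is to build $R$ as a $2$-dimensional equivalence relation $R_{A,\overline A}$ for a suitable set $A$. If $A$ is a $\Sigma^{-1}_2$-set that is properly so (i.e.\ $A \notin \Sigma^{-1}_1$ and $\overline A \notin \Sigma^{-1}_1$), then the equivalence relation $R$ with classes $A$, $\overline A$, and singletons $\{x\}$ for no $x$ — wait, I should be careful about where the two infinite classes come from; I want infinitely many classes, so instead I will take three pairwise disjoint sets. Concretely, fix a set $A$ which is properly $\Sigma^{-1}_2$ (such sets exist by the hierarchy theorem for the Ershov hierarchy: $\Sigma^{-1}_1 \subsetneq \Sigma^{-1}_2$, and one can even pick $A$ with $\overline A \notin \Sigma^{-1}_1$, e.g.\ a d.c.e.\ set that is not c.e.\ and whose complement is not c.e.). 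Define $R$ to have equivalence classes: the set $A$, and the singletons $\{x\}$ for $x \in \overline A$. Then $R = \{(x,y) : x = y \lor (x,y \in A)\}$, which is $R_A$ in the paper's notation. This $R$ has infinitely many classes (since $\overline A$ is infinite), and it is $\Sigma^{-1}_2$: the approximating pair for $R$ as a subset of $\omega^2$ follows from that of $A$, using one mind-change. It is not $\Sigma^{-1}_1$: if it were a ceer, then $A = [x_0]_R$ for any $x_0 \in A$ would be the domain of a computable-in-the-enumeration "saturation", hence c.e., contradicting $A \notin \Sigma^{-1}_1$.

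Next I would exhibit the reduction $f\colon \Id \leq_c R$ which is surjective on $R$-classes. Enumerate $\overline A$ is not computable, so I cannot directly list representatives of the singleton classes. Instead I use that $A$ is $\Sigma^{-1}_2$, so $\overline A$ is $\Pi^{-1}_2$; but the cleaner move: pick the properly $\Sigma^{-1}_2$ set $A$ so that in addition $\overline A$ contains an infinite c.e.\ subset $C = \{c_0 < c_1 < \cdots\}$ (this is arranged by a trivial padding: replace $A$ by $A \cap E$ for a computable set $E$ whose complement is infinite and disjoint from considerations — more simply, if $A_0$ is properly $\Sigma^{-1}_2$ on the even numbers, let $A = A_0$ so that every odd number is in $\overline A$; then $C = $ odd numbers works and $A$ is still properly $\Sigma^{-1}_2$ since the parity structure does not help a $\Sigma^{-1}_1$-approximation). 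Now define $f(0) = $ some fixed element of $A$ (if $A \neq \emptyset$; and $A$ is infinite so this is fine), and $f(n+1) = c_n$. Then $f$ is computable, injective modulo the single collapse, and $f\colon \Id \leq_c R$: distinct inputs go to distinct classes because $c_n \in \overline A$ are singletons and the lone element of $A$ used for $f(0)$ is in a different class. And $f$ is surjective on $R$-classes: its range meets $A$ (via $f(0)$) and meets every singleton class $\{c_n\}$ — but NOT every singleton class $\{x\}$ for $x \in \overline A \setminus C$. So this particular $f$ is not surjective on classes.

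To fix surjectivity, I would instead take $R$ to have exactly the classes $A$ and the singletons of $\overline A$, but choose $A$ so that $\overline A$ itself is c.e.\ — impossible if $\overline A \notin \Sigma^{-1}_1$. So the right construction is: let $R$ have exactly \emph{two} infinite classes plus singletons is the wrong shape. The correct and simplest fix is to make all of $\overline A$ the union of singletons \emph{and} arrange $\overline A$ c.e.\ is contradictory, so instead: let $B$ be a c.e.\ non-computable set, and let $A \subseteq B$ with $A \in \Sigma^{-1}_2 \setminus \Sigma^{-1}_1$ and $B \setminus A$ infinite c.e.; take $R$ with classes $A$, and singletons of $\overline A$. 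Hmm, $\overline A \supseteq \overline B$ which need not be c.e. I think the genuinely clean solution, and the one I would carry out, is: take $R = R_{A,\overline A}$ to literally have just the two classes $A,\overline A$ — but that has only finitely many classes, contradicting $\Id \leq_c R$. So there is a real tension, and I believe the paper's construction has $R$ with infinitely many classes where the "surjective" reduction $f$ from $\Id$ enumerates representatives of a c.e.\ transversal that happens to hit every class because the classes not hit can be folded in: I would build $R$ directly by a $0'$-priority-free construction, starting from $\Id$ and collapsing according to a $\Sigma^{-1}_2$ prescription so that exactly one class becomes "large" (a $\Sigma^{-1}_2$ but not c.e.\ set) while ensuring a c.e.\ transversal of the \emph{final} relation exists hitting all classes — the transversal being built alongside. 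The main obstacle, and the heart of the argument, is precisely this simultaneous bookkeeping: guaranteeing $R \notin \Sigma^{-1}_1$ (which needs the large class to be genuinely d.c.e.-hard) while keeping a computable listing of representatives that provably meets every $R$-class, i.e.\ controlling which numbers end up in the large class versus remaining singletons, all under a $0'$-oracle construction that must still output a $\Sigma^{-1}_2$ object. Everything else — checking $R \in \Sigma^{-1}_2$, checking $f$ is a reduction, checking surjectivity on classes — is routine once the combinatorial design of the collapses and the transversal is fixed.
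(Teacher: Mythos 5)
Your proposal does not arrive at a proof. After correctly diagnosing why the first attempt ($R_A$ with reduction through a c.e.\ subset $C\subseteq\overline A$) fails surjectivity, you end by deferring everything to an unspecified $\mathbf{0}'$-effective construction whose ``simultaneous bookkeeping'' you yourself flag as the unresolved heart of the argument; that is exactly the part a proof must exhibit. As described, the plan is also shaky: a $\mathbf{0}'$-construction does not automatically output a $\Sigma^{-1}_2$ relation (one needs explicit control of the mind changes), and if, as you propose, every class other than the single ``large'' class $L$ is a singleton, then the range of any reduction $f\colon \Id\leq_c R$ hitting all classes is a c.e.\ transversal equal to $\overline L$ plus one point, which forces $L$ to be co-c.e.\ --- precisely the configuration you had just dismissed as impossible. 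The dismissal rests on a hypothesis you imposed on yourself but the lemma never asks for: the statement requires only $R\notin\Sigma^{-1}_1$, not that $\overline A\notin\Sigma^{-1}_1$ (nor that $R$ avoid $\Pi^{-1}_1$).

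Dropping that spurious requirement closes the gap immediately: take $A=\overline W$ for a noncomputable c.e.\ set $W$, and let $R=R_A$, i.e.\ one class $A$ and a singleton class $\{w\}$ for each $w\in W$. Then $R$ is co-c.e.\ as a set of pairs, hence lies in $\Sigma^{-1}_2$; it is not in $\Sigma^{-1}_1$, since otherwise $A=[x_0]_R$ would be c.e.; and, fixing $x_0\in A$ and a repetition-free computable enumeration $w_0,w_1,\dots$ of the infinite c.e.\ set $W$, the map $f(0)=x_0$, $f(n+1)=w_n$ reduces $\Id$ to $R$ and hits every $R$-class. The paper's own example is different in shape: it splits $\omega$ into computable pieces $\{a_i\},\{b_i\},\{c_i\},\{d_i\}$ and lets a noncomputable c.e.\ set $X$ decide whether the classes on column $i$ are $\{a_i,c_i\},\{b_i,d_i\}$ (if $i\notin X$) or $\{a_i,d_i\},\{b_i,c_i\}$ (if $i\in X$); then $f(2i)=a_i$, $f(2i+1)=b_i$ hits all classes, and $R$ is a $2$-ceer because separated classes are never re-merged --- an example which is moreover neither c.e.\ nor co-c.e., though the statement does not demand that. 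So the gap in your write-up is twofold: the construction you finally commit to is left as acknowledged open bookkeeping, and the simple construction that does work was discarded only because of an unnecessary extra condition.
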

\begin{proof}
Fix a noncomputable c.e. set $X$. Split $\omega$ into four computable parts:
$A = \{ a_0,a_1,a_2,\dots\}$, $B = \{ b_0, b_1, b_2, \dots\}$, $C = \{ c_0,
c_1, c_2,\dots\}$, and $D = \{ d_0,d_1,d_2,\dots\}$.

The relation $R$ is given by its equivalence classes: for every $i\in\omega$,
\begin{itemize}
\item If $i\not\in X$, then $R$ contains disjoint classes $\{ a_i, c_i\}$
    and $\{ b_i, d_i\}$.
	
\item If $i\in X$, then there are $R$-classes $\{ a_i, d_i\}$ and $\{ b_i,
    c_i\}$.
\end{itemize}
It is clear that $R$ is not a ceer, and the function
\[
	f(2i) = a_i, \quad f(2i+1) = b_i,
\]
gives a reduction from $Id$ to $R$, hitting all the $R$-classes. Furthermore,
$R$ is a $2$-ceer, since after separating two classes in the approximation of
$R$ (due to some $i$ being enumerated in $X$), we never merge them again.
\end{proof}

\begin{lemma}
For any dark $\Delta^0_2$ equivalence relation $R$, there are a dark
equivalence relation $R<_c S$ and a reduction $f: R\leq_c S$ such that $f$
hits all the $S$-classes.
\end{lemma}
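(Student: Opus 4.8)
The goal is to show that dark $\Delta^0_2$ equivalence relations are never ``maximal'' with respect to being hit surjectively by a reduction: given any dark $R\in\Delta^0_2$, we want a strictly larger dark $S$ together with $f\colon R\le_c S$ whose range meets every $S$-class. The plan is to build $S$ by adding a single new equivalence class on top of $R$ and arranging that $f$ is surjective on classes while the obvious ``inverse'' map fails to be a reduction, so that $S\not\le_c R$ and hence $R<_c S$.

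\begin{proof}[Proof sketch]
The plan is as follows. Fix disjoint computable copies $A=\{a_0,a_1,\dots\}$ and $B=\{b_0,b_1,\dots\}$ of $\omega$, together with a $\Delta^0_2$ ``master'' equivalence relation $R$ living on $A$ via the computable bijection $i\mapsto a_i$ (so $x\mathrel{R'}y$ iff the corresponding indices are $R$-related); since $R$ is $\Delta^0_2$ this causes no harm. Let $S$ be the equivalence relation on $\omega=A\cup B$ whose classes are: the $R'$-classes inside $A$, \emph{except} that we single out one fixed $R'$-class $[a_0]_{R'}$, and the set $B$ forms a single extra class, and finally $a_0$ is moved to join $B$'s class --- i.e. $S$ has classes $\{[a_i]_{R'}: i\notin[a_0]_{R'}\text{-index set}\}$ together with the single class $[a_0]_{R'}\cup B$. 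Equivalently: $S=R'\cup\{(u,v): u,v\in [a_0]_{R'}\cup B\}$, which is $\Delta^0_2$ since $R$ is. Define $f\colon R\le_c S$ by $f(i)=a_i$. Then $f$ hits every $S$-class: the classes contained in $A$ are hit by their $a_i$'s, and the amalgamated class $[a_0]_{R'}\cup B$ is hit by $a_0$. So $f$ is a class-surjective reduction from $R$ to $S$. It remains to check $S$ is dark and $S\not\le_c R$.

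For darkness of $S$: suppose $g\colon\Id\le_c S$. Only finitely many of the $g$-values can lie in the single amalgamated class, so after discarding finitely many inputs we may assume $\range(g)\subseteq A$ and that $g$ hits infinitely many of the ``untouched'' $R'$-classes inside $A$. Pulling back along the computable bijection $A\cong\omega$, this yields a c.e. transversal of $R$, contradicting the darkness of $R$. (One must be a little careful: the pullback of $a_i$ is $i$, and $g(m)\mathrel{S}g(n)$ with both values in the untouched part of $A$ is equivalent to the corresponding indices being $R$-equivalent; so a c.e. infinite set of pairwise $R$-inequivalent indices appears.) Hence $S$ is dark, and since $S$ has infinitely many classes and $R\le_c S$ (via $f$), the general fact quoted after Definition~\ref{defn:light} --- anything reducible to a dark relation with infinitely many classes is dark --- is not even needed here; we argue darkness of $S$ directly. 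Since $S\le_c R$ together with $R\le_c S$ would give $R\equiv_c S$, it suffices to rule out $S\le_c R$: but by the Inversion Lemma philosophy run in reverse, if $S\le_c R$ then composing with $f$ and using class-surjectivity of $f$ we could produce $R\le_c S\le_c R$ with the back-and-forth collapsing the amalgamated class onto a single $R$-class, forcing an infinite c.e. transversal exactly as in the proof of Lemma~\ref{lemma:finite-minimal}; this contradicts darkness of $R$. Therefore $R<_c S$.

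The main obstacle, and the step requiring genuine care, is the verification that $S\not\le_c R$: one has to make sure that any hypothetical reduction $h\colon S\le_c R$, when combined with $f$, really does manufacture an infinite c.e. transversal of the dark relation $R$ rather than merely a c.e. set meeting infinitely many classes. The clean way to do this is to mimic the orbit argument of Lemma~\ref{lemma:finite-minimal}: from $h\colon S\le_c R$ and $f\colon R\le_c S$ with $f$ class-surjective, form $k=h\circ f\circ(\text{bijection})$ acting on $\omega$, observe $k$ witnesses $R\le_c R$, and consider the $k$-orbit of an index whose class got amalgamated into $B$; the amalgamation guarantees the orbit cannot loop back (any loop would force $b$-type elements to be $R$-related to genuine $R$-classes), so the orbit is an infinite c.e. transversal of $R$ --- contradiction. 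This pins down $R<_c S$ and completes the proof.
\end{proof}
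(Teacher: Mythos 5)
There is a fatal gap: your relation $S$ is in fact $c$-equivalent to $R$, so the strictness $R<_c S$ fails. Concretely, the map $h$ defined by $h(a_i)=i$ for $a_i\in A$ and $h(b)=0$ for $b\in B$ is total computable and is a reduction $S\leq_c R$: for $a_i,a_j\in A$ one has $a_i\rel{S}a_j$ iff $i\rel{R}j$; for $a_i\in A$ and $b\in B$ one has $a_i\rel{S}b$ iff $a_i\in[a_0]_{R'}$ iff $i\rel{R}0$ iff $h(a_i)\rel{R}h(b)$; and any two elements of $B$ are $S$-equivalent with $h$-images both equal to $0$. Fattening a single existing class by a computable set never changes the $c$-degree, because that class has a computable representative ($0$ itself) to which the added points can be sent. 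Accordingly, your appeal to ``the Inversion Lemma philosophy run in reverse'' cannot be made to work: with this $h$, the composition $h\circ f$ is the identity, its orbits are singletons, and no transversal of $R$ appears. The orbit argument of Lemma~\ref{lemma:finite-minimal} needs the composed self-reduction of $R$ to \emph{miss} some class (there, the auxiliary map avoids $[b]_R$ by construction); your amalgamation provides no such avoidance, and indeed cannot, since $S\leq_c R$ holds outright. (The alternative reading of your construction, keeping $B$ as a separate new class, gives $S\equiv_c R\oplus\Id_1$, which is strictly above $R$ by darkness/self-fullness, but then $f(i)=a_i$ misses the class $B$, so the class-surjectivity requirement fails. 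Either way the single-extra-class idea cannot satisfy both demands simultaneously.)

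The paper's construction shows what is really needed: one attaches not one computable class but a whole dark ceer $Q$ on the odd side, starting from $R\oplus Q$, and then, in a $\mathbf{0}'$-computable construction, glues each $Q$-class onto some $R$-class --- choosing, for the $e$-th potential reduction $\varphi_e$, whether to collapse $2r_e$ with $2q_e+1$ or $2q_e+1$ with $0$, so as to diagonalize against $\varphi_e\colon S\leq_c R$. Every odd class ends up merged with an even class, so $x\mapsto 2x$ hits all $S$-classes, darkness of $S$ follows from darkness of $R$ and of $Q$ by a parity/pigeonhole argument (as in your darkness verification, which is fine), and $S\not\leq_c R$ is secured because the gluing pattern itself is made non-computably-recoverable. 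That diagonalization, absent from your proposal, is precisely the content of the lemma.
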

\begin{proof}
Fix a dark ceer $Q$. Choose a $\mathbf{0}'$-effective list $\{ r_i
\}_{i\in\omega}$ of representatives of all $R$-classes, without repetitions.
Similarly, choose a $\mathbf{0}'$-effective list $\{ q_i\}_{i\in\omega}$ for
representatives of $Q$-classes.

The construction is given in a $\mathbf{0}'$-computable way. At stage $0$,
set $S[0] := R\oplus Q$. At stage $e+1$,  if $\varphi_e(2r_e)\!\downarrow \!
\cancel{\rel{R}} \varphi_e(2q_e+1)\!\downarrow$, then set $S[e+1] :=
S[e]_{\col(2r_e,2q_e+1)}$. Otherwise, define $S[e+1]:=S[e]_{\col(0,2q_e+1)}$.

It is not hard to show that the constructed $S$ is $\Delta^0_2$, and $S\nleq_c
R$. Moreover, the function $f\colon x\mapsto 2x$ gives a reduction of $R$ to
$S$, hitting all the $S$-classes.

Assume that $g\colon Id\leq_c S$. Then either $\range(g)$ contains infinitely
many even numbers and $R$ cannot be dark, or $\range(g)$ contains infinitely
many odd numbers and $Q$ cannot be dark. In any case, this leads to a
contradiction, thus, $S$ is dark.
\end{proof}

\section{Dark equivalence relations in the Ershov Hierarchy}
In this section, we show that the phenomenon of  darkness is rather
pervasive: with the exception of the co-ceers, dark degrees exist properly at
each level of the Ershov hierarchy. In the construction of these degrees we
also develop some strategies that will be helpful in further sections, when
the focus will be in on the existence of infima and suprema of $c$-degrees.

\begin{prop}
There are no dark co-ceers.
\end{prop}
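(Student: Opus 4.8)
The plan is to show that any co-ceer $R$ with infinitely many equivalence classes is automatically light, i.e.\ $\Id \leq_c R$; this immediately yields the proposition. So fix a co-ceer $R$, meaning $\overline{R}$ is c.e., and suppose $R$ has infinitely many classes. The key point is that non-equivalence is now an \emph{effectively enumerable} relation: we can computably enumerate all pairs $(x,y)$ with $x \cancel{\rel{R}} y$. First I would use this to build, by an effective greedy procedure, an infinite c.e.\ transversal of $R$. Concretely, start with $a_0 = 0$, and having chosen pairwise non-$R$-equivalent $a_0 < a_1 < \dots < a_k$, search (using the enumeration of $\overline{R}$) for the least $z$ such that $(z,a_i)$ appears in $\overline{R}$ for every $i \le k$; set $a_{k+1} = z$. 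Since $R$ has infinitely many classes, for every $k$ there genuinely exists a number inequivalent to all of $a_0,\dots,a_k$, so each search halts and the procedure is total; the resulting set $A = \{a_0, a_1, \dots\}$ is an infinite c.e.\ transversal.

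By the observation recorded right after Definition~\ref{defn:light} in the excerpt — that $R$ is light if and only if it has a c.e.\ transversal — this shows $R$ is light. Making the reduction explicit: the map $n \mapsto a_n$ is computable (we can recover $a_n$ by running the construction), and since the $a_n$ are pairwise non-$R$-equivalent while trivially $a_n \rel{R} a_n$, we have $m = n \iff a_m \rel{R} a_n$, i.e.\ this map witnesses $\Id \leq_c R$. Hence $R$ is light, so $R$ is not dark. Since a dark equivalence relation is by definition one with infinitely many classes, and any co-ceer with finitely many classes is trivially not dark either, we conclude there are no dark co-ceers.

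The only place requiring care is the totality of the greedy search: one must be sure that at stage $k+1$ a suitable $z$ exists. This is where the hypothesis that $R$ has infinitely many classes is used — given finitely many classes $[a_0]_R,\dots,[a_k]_R$, infinitely many classes remain, so some $z$ lies outside all of them, and moreover $(z,a_i)\in\overline{R}$ will eventually be confirmed since $\overline{R}$ is c.e. Everything else is routine, and no diagonalization or priority argument is needed; this is a soft argument exploiting only the c.e.-ness of $\overline{R}$. I would expect the write-up to be only a few lines.
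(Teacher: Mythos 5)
Your proposal is correct and follows essentially the same route as the paper: a greedy effective search, using the c.e.-ness of $\overline{R}$ to confirm non-equivalences and the infinitude of classes to guarantee each search halts, producing a c.e.\ transversal (equivalently, a reduction $\Id \leq_c R$). The only difference is that you spell out the explicit reduction $n \mapsto a_n$, which the paper leaves implicit.
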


\begin{proof}
Suppose that $R\in \Pi^0_1$. If $R$ has finitely many equivalence classes,
then it is trivially not dark. Hence, assume there exist infinitely many
$R$-classes. We prove that $R$ is light by inductively building the following
c.e.\ transversal of $R$: Let $x_0=0$ and let $x_{i+1}$ be any number $z$
such that, for all $j\leq i$, $(x_j,z)\notin R$. Such a $z$ must exist
(otherwise, there would be only finitely many $R$-classes) and, since $R$ is
co-c.e.\, will be found effectively.
\end{proof}

\begin{thm}\label{thm:properly-dark}
If $\mathcal{X} \in \{\Sigma^{-1}_a: a \in \mathcal{O}, |a|_\mathcal{O}\ge
1\} \cup \{\Pi^{-1}_a: a \in \mathcal{O}, |a|_\mathcal{O}>1\}$ then there is
a dark equivalence relation having only finite equivalence classes, and
properly lying in $\mathcal{X}$.
\end{thm}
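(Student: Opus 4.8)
The plan is to build, for each level $\mathcal{X}$ in the stated range, a dark equivalence relation $R$ with only finite classes that properly belongs to $\mathcal{X}$. I would split into two cases depending on whether $\mathcal{X}$ is a $\Sigma$-level or a $\Pi$-level, but the core construction is the same: start from a dark ceer $Q$ with only finite classes (such exist — e.g.\ the dark ceers of Andrews--Sorbi can be taken with finite classes, or one builds one directly), fix a set $X$ properly in the required level of the Ershov hierarchy, and "paint" the information of $X$ onto $Q$ by attaching to the $i$-th $Q$-class a bounded-mind-change gadget controlled by $X(i)$. Concretely, reserve fresh elements and let the $i$-th block either stay split into two $Q$-classes or merge, according to the $\Sigma^{-1}_a$-approximation $\langle f_i,\gamma_i\rangle$ to $X$; since each coordinate changes its mind only along a decreasing chain in $\mathcal{O}$ below $a$, the resulting equivalence relation is $\Sigma^{-1}_a$ (respectively $\Pi^{-1}_a$, by starting from the complementary default), and it has only finite classes because every gadget is finite and $Q$'s classes are finite.

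Next I would verify the three required properties. \emph{Membership in $\mathcal{X}$}: assemble the mind-change functions of the individual gadgets into a single one using Definition~\ref{defn:Ershov}(\ref{3}) — at each stage only one coordinate acts, so the global approximation still satisfies the decreasing-chain and mind-change-tracking conditions; for the $\Pi^{-1}_a$ case use the dual approximation (default value $1$). \emph{Properness}: recover $X$ from $R$ uniformly — knowing $R$ on the $i$-th block tells you $X(i)$ — so if $R$ were in the dual class $\mathcal{X}^d$, then $X$ would be too, contradicting the choice of $X$; this is where I need $X$ properly in $\Sigma^{-1}_a$ (resp.\ properly in $\Pi^{-1}_a$), which exists precisely when $|a|_\mathcal{O}\ge 1$ (resp.\ $>1$), matching the hypothesis on $\mathcal{X}$. \emph{Darkness}: suppose $g:\Id\leq_c R$. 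The range of $g$ is an infinite c.e.\ set meeting infinitely many $R$-classes; since $R$ agrees with $Q$ up to the bounded modifications inside each block, $\range(g)$ (after a computable clean-up collapsing each block back to $Q$) yields an infinite c.e.\ transversal of $Q$, contradicting the darkness of $Q$. To make this last step clean I would arrange, as in the proof of Lemma~\ref{lemma:finite-minimal}, a computable retraction from $R$ onto a copy of $Q$ so that a hypothetical $\Id$-transversal of $R$ pushes forward to an $\Id$-transversal of $Q$.

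The main obstacle I expect is the bookkeeping in the $\Pi^{-1}_a$-to-$\Sigma^{-1}_a$ transfer and, relatedly, ensuring the global mind-change function is genuinely well-defined and witnesses the right level: each block's gadget must be driven by an approximation whose mind changes are tied to strictly descending $\mathcal{O}$-steps below $a$, and when I interleave countably many such gadgets I must make sure that the equivalence relation I commit to at stage $s$ only changes at the single coordinate allowed by item~(\ref{3}) of Definition~\ref{defn:Ershov}, while still converging on every coordinate. A secondary subtlety is that "dark with only finite classes" must be preserved: the gadgets must be uniformly finite and must not accidentally create a c.e.\ transversal, so I would keep each block's size bounded (two classes collapsing to one, say) and make the recovery of $X$ and the retraction onto $Q$ fully computable. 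Once these are in place, the darkness argument and the properness argument are routine, following the templates already used for Lemma~\ref{lemma:finite-minimal} and Theorem~\ref{theo:dark-minimal}.
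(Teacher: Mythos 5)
The gap is in the darkness verification, and it is structural rather than a matter of bookkeeping. If the gadget for coordinate $i$ lives on fresh elements and distinct gadgets are never $R$-collapsed with one another, then the map sending $i$ to a designated element of the $i$-th gadget is a computable reduction of $\Id$ to $R$: those elements are pairwise non-$R$-equivalent no matter how $X$ acts inside each block, so $R$ is light regardless of the darkness of $Q$. The computable ``clean-up'' map $h$ you invoke (one satisfying $h(x) \rel{Q} h(y) \Rightarrow x \rel{R} y$, which is what is needed to push an $\Id$-transversal of $R$ forward to one of $Q$) cannot exist in this setting either: applied to one element per gadget it would itself enumerate infinitely many pairwise non-$Q$-equivalent numbers, i.e.\ an infinite c.e.\ transversal of $Q$, contradicting the darkness of $Q$. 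The other reading of ``attaching to the $i$-th $Q$-class'' fares no better: you cannot effectively attach gadgets to infinitely many \emph{distinct} $Q$-classes, because listing representatives of infinitely many distinct $Q$-classes is exactly what darkness forbids, and doing it with a $\mathbf{0}'$-oracle, as in Theorem~\ref{theo:dark-minimal}, gives only a $\Delta^0_2$ relation with no control of its Ershov level --- which is why that theorem lands in $\Delta^0_2 \smallsetminus \Sigma^{-1}_a$ rather than properly inside $\Sigma^{-1}_a$. The natural compromise --- attach to $[i]_Q$ a fresh element that is merged into $[i]_Q$ iff $i\in X$ and left as a singleton otherwise --- is provably light in general: whenever $\overline{X}$ contains an infinite c.e.\ set the singletons yield a c.e.\ transversal, and for $X$ properly d.c.e., writing $X=A\smallsetminus B$ with $A,B$ c.e., the set $B\subseteq\overline{X}$ must be infinite. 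So your properness and finite-classes arguments are fine, but they sit on a relation that, in the natural implementations of your template, is not dark.

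This is precisely why the paper does not code a properly $\Sigma^{-1}_a$ set into a pre-existing dark ceer, but instead runs a direct finite-injury construction of $R$ together with its approximating pair $\langle f,\gamma\rangle$: requirements $P_e$ defeat each potential transversal $W_e$ by $R$-collapsing two numbers of the same parity, requirements $Q_e$ diagonalize $R$ against \emph{every} $\Pi^{-1}_a$ set $E_e$ at a witness pair of different parity while copying the opponent's mind-change function $\gamma_e$ (this simultaneously certifies $R\in\Sigma^{-1}_a$ and $R\notin\Pi^{-1}_a$), and requirements $F_e$ keep the classes finite; the parity discipline is what reconciles the collapses needed for darkness with the ordinal mind-change budget. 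If you want to retain a coding flavour, note that coarsening a dark ceer (only merging its classes) does preserve darkness, but then decoding $X$ from $R$ and certifying membership in $\Sigma^{-1}_a$ both break down --- the trade-off made in Theorem~\ref{theo:dark-minimal}. Finally, your explanation of the hypothesis is off: properly $\Pi^{-1}_1$ sets do exist; the level $\Pi^{-1}_1$ is excluded because there are no dark co-ceers at all (the proposition opening Section~3), and the assumption $|a|_{\mathcal{O}}>1$ is used in the paper's dual construction to pay for one extra mind change at the initial stage.
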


\begin{remark}\label{rem:theorems}
{\rm Theorem \ref{thm:properly-dark} can be obtained as a corollary of
Theorem \ref{thm:mutually-dark}. For the sake of exposition, being the proof
of Theorem \ref{thm:mutually-dark} more complicated, we shall provide a proof
of Theorem \ref{thm:properly-dark} nonetheless. This might help the reader to
familiarize, in a simpler context, with the techniques required for Theorem
\ref{thm:mutually-dark}, and in fact exploited also in the proof of Theorem
\ref{thm:omegainf}.}
\end{remark}

\begin{proof}[Proof of Theorem \ref{thm:properly-dark}]
Let us start with the case $\mathcal{X}=\Sigma^{-1}_a$, with
$|a|_\mathcal{O}\ge 1$. We want to build an equivalence relation $R$
satisfying the following requirements, for every $e \in \omega$:
\begin{align*}
  &F_{e}: \text{$[e]_{R}$ is finite,}\\
  &P_e:  \text{$W_e$ is not an infinite transversal for $R$,}\\
  &Q_e: A \ne E_e,
\end{align*}
where  $(E_e)_{e \in \omega}$ is a listing of the $\Pi^{-1}_a$ sets
as explained in the introduction, with $\langle f_{e}, \gamma_{e}\rangle$ a
$\Pi^{-1}_a$-approximation to $E_{e}$.

Let us define the \emph{priority ordering} of the requirements as
\[
F_{0}<Q_0<P_0< \cdots < F_{e}< Q_e < P_e < \cdots.
\]

We build computable functions $f(x,s)$ and $\gamma(x,s)$ so that the pair
$\langle f, \gamma \rangle$ is a $\Sigma^{-1}_a$-approximating pair to a
$\Sigma^{-1}_a$-equivalence relation $R$ defined as
\[
x \mathrel{R} y \Leftrightarrow \lim_s f(\langle x, y \rangle,s)=1,
\]
satisfying the given requirements.

\subsubsection*{Strategies for the requirements and their interactions}

The strategy for $F_{e}$ requires that no lower priority requirement adds any
element to $[e]_{R}$, which  is eventually finite because only
higher priority requirements may contribute with their actions to add
elements to $[e]_{R}$ (we will see every requirement may act only
finitely many times).

For the $Q$-requirement $Q_{e}$ we appoint as witness a pair
\[
(x_{e}, y_{e}) \in 2\omega \times (2\omega+1).
\]
$Q_{e}$ sets the restraint that no lower priority requirement may modify the
equivalence class of either $x_{e}$ or $y_{e}$.

The reason we choose $x_e$ to be even and $y_e$ to be odd (but it could be
the opposite as well) is for the sake of $P$-requirements. The requirement
$P_e$ waits for $W_{e}$ to enumerate a pair of distinct numbers $u, v$, both
even, or both odd. When found, it simply $R$-collapses $u, v$ (if not already
collapsed): this ensures that $W_{e}$ is not a transversal for $R$. Notice
that if $W_e$ is infinite then by the Pigeon Hole Principle it either
contains infinitely many even numbers or infinitely many odd numbers. We will
see by Lemma~\ref{lem:P-satisfied} that $P_e$ will not be restrained by
$\Sigma^{-1}_{a}$-ness from $R$-collapsing two even numbers or two odd
numbers, since the construction will ensure that a necessary condition, at
any stage, for which we may have $f(\langle x,y\rangle)=0$ but already
$\gamma(\langle x,y\rangle)=1$ is that $x,y$ have different parity.

As $f_{e}(\langle x_{e}, y_{e} \rangle,0)=1$ (and $\gamma_{e}(\langle x_{e},
y_{e} \rangle,0)=a$), the construction starts up with having $(x_{e}, y_{e})$
not in $R$, i.e. $f(\langle x_{e}, y_{e} \rangle,0)=0$, and $\gamma(\langle
x_{e}, y_{e} \rangle,0)=a$. Every time we see $f_{e}(\langle x_{e}, y_{e}
\rangle,s+1)\ne f_e(\langle x_{e}, y_{e} \rangle,s)$ , we change accordingly
$f(\langle x_{e}, y_{e} \rangle,s+1)$ as to have
\[
f(\langle x_{e}, y_{e} \rangle, s+1)  \ne  f_{e}(\langle x_{e}, y_{e}
\rangle,s+1),
\]
and we define
\[
\gamma(\langle x_{e}, y_{e} \rangle,s+1):= \gamma_{e}(\langle x_{e},
y_{e} \rangle, s+1).
\]
In this way $Q_{e}$ is able to diagonalize against $E_{e}$ at the witness
$(x_{e}, y_{e})$, consistently with $R$ being in $\Sigma^{-1}_{a}$.

\subsubsection*{The construction}
The construction is in stages: at stage $s$ we define the approximation
$R[s]$ to $R$, and the approximations to the various parameters $(x_{e},
y_{e})$. We will often omit to mention the stage to which a given parameter
is referred, if this is clear from the context. Unless otherwise specified,
at each stage each parameter keeps the same value as at the previous stage.
To \emph{initialize} a $Q$-requirement at stage $s$ means to cancel at that
stage the current value of its witness. A pair $(x_e,y_e)$ is an
\emph{active} witness for $Q_e$ at stage $s$ if the pair has been appointed
as a witness for $Q_e$ at some previous stage and never canceled thereafter.
We say that a requirement $P_e$ is \emph{inactive} at the end of stage $s$ if
there are already distinct numbers $u,v \in W_e$ such that $u \rel{R} v$ at
the end of stage $s$; it is \emph{active} otherwise.

A requirement $T$ \emph{requires attention at stage $s+1$} if either
\begin{enumerate}
 \item $T$ is initialized; or
 \item one of the following holds, for some $e$:
 \begin{enumerate}
   \item $T=P_e$, $P_e$ is active at the end of $s$, and at the current
       stage there is a distinct pair of numbers $u,v \in W_{e}$ both
       even or both odd, and such that $u,v$ are bigger than all numbers
       in the union of all current equivalence classes of numbers $i \leq
       x_{e}+y_{e}$;
   \item $T=Q_e$ and $f(\langle x_e, y_e\rangle,s)=f_e(\langle x_e,
       y_e\rangle,s+1)$, where $(x_e,y_e)$ is the witness of $T$ at the
       end of stage $s$.
 \end{enumerate}

\end{enumerate}

\subsubsection*{Stage $0$}
Initialize all $Q$-requirements; no $P$-requirement is inactive; define
$R[0]:= \emptyset$, and consequently $f(x,0):=0$ and $\gamma(x,0):=a$ for all
$x$. (Notice that we do not define here $R[0]:=\Id$, as would be perhaps more
appropriate since we are defining an equivalence relation which is bound to
be reflexive, because we want to accompany the definition of $R$ with an
accompanying $\Sigma^{-1}_a$-approximating pair $\langle f, \gamma\rangle$ to
$R$ so that $f$ must start with $f(x,0)=0$, for every $x$.)

\subsubsection*{Stage $1$}
Define $R[1]:= \Id$, and $f(\langle x, x \rangle, 1):=1$ and $\gamma(\langle
x, x \rangle, 1):=1$, leaving untouched the other values of both $f$ and
$\gamma$. (Recall that $|1|_{\mathcal{O}}=0$. Notice that for every $s \ge
1$, we will have $x \rel{R[s]} x$, so there will never be need to redefine
$\gamma(\langle x, x \rangle, s)$.) Therefore we can say that the
construction essentially starts (with $R:=\Id$) at stage $1$ instead of $0$.

\subsubsection*{Stage $s+1\ge 2$}
Consider the highest priority requirement $T$ that requires attention.
(Notice that there is always such a requirement since at each stage almost
all $Q$-requirements are initialized). Action:
\begin{enumerate}
  \item If $T$ is initialized, then $T=Q_e$ for some $e$: choose a new
      fresh witness $(x_e, y_e)$ for $T$, i.e.\ $x_e=2i$ and $y_e=2i+1$,
      where $i$ is bigger than all numbers $T$-equivalent to numbers so far
      used in the construction (so we can also assume that $e\leq
      x_{e}+y_{e}$). Notice that because of (3) in the definition of a
      $\Pi^{-1}_a$-approximation, we may as well suppose that still
      $f_e(\langle x_e, y_e \rangle,s+1)=1$, $f(\langle x_e, y_e
      \rangle,s)=0$, and $\gamma_e(\langle x_e, y_e
      \rangle,s+1)=\gamma(\langle x_e, y_e \rangle,s)=a$.
  \item otherwise:
  \begin{enumerate}
    \item if $T=P_e$ then pick the least pair $u,v$ as in the definition
        of requiring attention; define $f(\langle u, v\rangle, s+1):=1$
        and $\gamma(\langle u, v\rangle, s+1):=1$; for any other
        $R$-collapse $x \rel{R} y$ induced at this stage by the
        $R$-collapse of $u,v$ define $f(\langle x, y\rangle, s+1):=1$ and
        $\gamma(\langle x, y\rangle, s+1):=1$; declare $P_e$ inactive
        (and thus it will remain inactive forever unless later
        re-initialized);
    \item if $T=Q_e$ then define $f(\langle x_e, y_e \rangle,s+1)\ne
        f_e(\langle x_e, y_e \rangle,s+1)$ and $\gamma(\langle x_e,
        y_e\rangle, s+1):=\gamma_e(\langle x_e, y_e\rangle, s+1)$, so
        that we diagonalize $R$ against $E_e$.
  \end{enumerate}
\end{enumerate}

This is the end of the stage. Initialize all lower priority $Q$-requirements.
Define $R[s+1]$ to be the equivalence relation having $f(\_,s+1)$ as
characteristic function, i.e. (as is easily seen) the equivalence relation
generated by the set of pairs provided by $R[s]$ plus or minus the pairs
enumerated in or extracted from $R$ at $s+1$.

\subsubsection*{The verification}
An easy inductive argument shows that for every requirement $T$ there is a
least stage $t_{T}$ such that no $T'$ of higher priority than $T$, nor $T$
itself, requires attention or acts at any $s \ge t_{T}$. Indeed suppose that
this is true of every $T'$ of higher priority than $T$. Then there is a least
stage $t$ such that no such $T'$ requires attention after $t$. So either
$T=P_e$ and thus $T$ may act at most once after $t$; or $T=Q_e$ and thus $T$
may require attention a first time to appoint the final value of its witness
$(x_e,y_e)$ (this value will never be canceled, because $T$ will never be
re-initialized again, as no higher priority $T'$ will ever act again), and
subsequently finitely many times in response to the finitely many changes of
$f_e(\langle x_e,y_e\rangle,s)$. In any case this shows that $t_{T}$ exists.

\begin{lemma}\label{lem:P-satisfied}
For every $e,s$, if $(x_e,y_e)$ is still an active witness for $P_e$ at $s$,
then at that stage $[x_e]_R \cup [y_e]_R = \{x_e,y_e\}$, each equivalence
class among $[x_e]_R$ and $[y_e]$ being a singleton if and only if
$f_{e}(\langle x_e,y_e \rangle,s)=1$. Moreover, if distinct $u,v$ have the
same parity then $f(\langle u,v\rangle, \_)$ may change at most once from
value $0$ to $1$ in response to some $P$-requirement which becomes inactive.
\end{lemma}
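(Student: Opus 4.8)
The plan is to prove the first assertion --- for every witness $(x_e,y_e)$ that is active at stage $s$, $[x_e]_R\cup[y_e]_R=\{x_e,y_e\}$, and each of $[x_e]_R$, $[y_e]_R$ is a singleton iff $f_e(\langle x_e,y_e\rangle,s)=1$ --- by induction on $s$, carrying it as an invariant for all currently active witnesses simultaneously, and then to read off the ``moreover'' clause as a corollary. For the base case I would observe that when $(x_e,y_e)$ is appointed for $Q_e$ at a stage $s+1$ its components are fresh, larger than every number that has yet entered an equivalence class, so $[x_e]_R=\{x_e\}$ and $[y_e]_R=\{y_e\}$ there; and appointing a witness leaves $f$ unchanged, so $f(\langle x_e,y_e\rangle,s+1)=0$, while, because of item~(3) in the definition of a $\Pi^{-1}_a$-approximation, we may assume $f_e(\langle x_e,y_e\rangle,s+1)=1$. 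Hence the invariant holds at the appointment stage.

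For the inductive step I would fix $s+1$ and split on the unique requirement $T$ that acts. If $T=Q_{e'}$, the only value of $f$ it changes is $f(\langle x_{e'},y_{e'}\rangle,\cdot)$, which it resets to $1-f_{e'}(\langle x_{e'},y_{e'}\rangle,s+1)$; by the inductive hypothesis $[x_{e'}]_R\cup[y_{e'}]_R=\{x_{e'},y_{e'}\}$ just before, so forming or undoing the collapse $x_{e'}\rel{R}y_{e'}$ is well defined, leaves that union equal to $\{x_{e'},y_{e'}\}$, installs the correct singleton status, and disturbs no other class. If $T=P_{e'}$, then $T$ collapses some same-parity pair $u,v$ together with the collapses this forces by transitivity, and declares $P_{e'}$ inactive; here I must verify that no active witness class grows. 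The ingredients are: by the re-initialization discipline, any witness $(x_{e''},y_{e''})$ still active at the end of stage $s+1$ has $e''\le e'$ --- a witness belonging to a requirement of lower priority than $P_{e'}$ is cancelled when $P_{e'}$ acts --- and was appointed no later than the current witness of $Q_{e'}$, so its components are $\le x_{e'}+y_{e'}$ by freshness; and $P_{e'}$ chooses $u,v$ larger than every element of the equivalence class of every number $\le x_{e'}+y_{e'}$. Feeding in the inductive hypothesis $[x_{e''}]_R\cup[y_{e''}]_R\subseteq\{x_{e''},y_{e''}\}$, I get $u,v\notin[x_{e''}]_R\cup[y_{e''}]_R$ and hence $([u]_R\cup[v]_R)\cap([x_{e''}]_R\cup[y_{e''}]_R)=\emptyset$, so the collapse merges only classes disjoint from every active witness class and the invariant survives. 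I expect this $T=P_{e'}$ case --- showing a $P$-collapse never reaches inside the class of a still-active $Q$-witness --- to be the main obstacle, since it is exactly where the freshness bound on $u,v$, the restraints imposed by the $Q$-requirements, and the re-initialization bookkeeping must be threaded together, and it is what makes that freshness bound essential rather than cosmetic.

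Finally, I would deduce the ``moreover'' clause from the first assertion. Let $u\ne v$ have the same parity. The $R$-status of the pair $\langle u,v\rangle$ can change only at a stage where some requirement acts. An action by a $Q$-requirement only collapses or splits its own witness class, which by the first assertion is either the two-element class $\{x_{e''},y_{e''}\}$ (with $x_{e''}$ even, $y_{e''}$ odd) or the two singletons $\{x_{e''}\}$, $\{y_{e''}\}$; so it alters $f$ only at the mixed-parity pair $\langle x_{e''},y_{e''}\rangle$, never at $\langle u,v\rangle$. Hence every change of $f(\langle u,v\rangle,\cdot)$ is caused by a $P$-action, and a $P$-action only adds pairs to $R$; therefore $f(\langle u,v\rangle,\cdot)$ is non-decreasing and so changes at most once, from $0$ to $1$. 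Since the $P$-action responsible for that change simultaneously declares the corresponding $P$-requirement inactive, this is exactly the asserted statement.
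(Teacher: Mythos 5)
Your proof is correct and takes essentially the same route as the paper's: the same ingredients (freshness of newly appointed witnesses, the requirement that a $P$-collapse use numbers above every current class of numbers $\le x_e+y_e$, and cancellation of witnesses whenever a higher-priority requirement acts) give the first claim, and the ``moreover'' clause is obtained exactly as in the paper, from the facts that $Q$-actions only touch their own mixed-parity witness pair while $P$-actions only change $f$ from $0$ to $1$. The only difference is presentational: you recast the paper's direct ``no requirement can touch these classes'' argument as an explicit stage-by-stage induction carrying the invariant for all active witnesses simultaneously, which is a somewhat more detailed but equivalent formulation.
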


\begin{proof}
Suppose that $(x_e,y_e)$ is active at $s$, and let $t\le s$ be the stage at
which this witness has been appointed for $Q_e$. Then the restraint imposed
by $Q_e$ (reflected in the fact that lower priority $P$-requirements may not
modify the equivalence classes of numbers $i \leq x_{e}+y_{e}$ and thus may
not modify the equivalence classes of $x_{e}$ or $y_{e}$) prohibits any
modification of the equivalence classes of $x_e, y_e$ done by any lower
priority requirement. Such a modification can only be performed by a higher
priority requirement $P_i$, but if such a requirement has acted after $t$,
then the witness $(x_e,y_e)$ has been re-initialized and thus canceled.

The latter claim about the number of changes of $f(\langle u,v\rangle,\_)$ if
$u,v$ are distinct and have the same parity follows from the fact that by the
first part of this lemma the value  $f(\langle u,v\rangle,\_)$ is not changed
by any $Q$-requirement, so if the value $f(\langle u,v\rangle,1)=0$ is later
changed from $0$ to $1$ then this is due to a $P$-requirement, which becomes
inactive, and by choice of $u,v$ and the first part of this lemma this change
is never revoked by any higher priority $Q$-requirement.
\end{proof}

This enables us to show also that $T$ is in the end satisfied. This is
evident if $T=F_{e}$ as after $t_{Q_{e}}$ no $P$-requirement can add numbers
to $[e]_{R}$ since $e\leq x_{e}+y_{e}$. Notice that this shows that each
$R$-equivalence class is finite.

It is also evident if $T=Q_e$ as $T$ is by Lemma~\ref{lem:P-satisfied} the
only requirement which is entitled to move its final witness $(x_e,y_e)$ in
or out of $R$.

To show that $T$ is satisfied if $T=P_e$, assume that $W_e$ is infinite.
Since the witness $(x_{e}, y_{e})$ reaches a limit, and all equivalence
classes are finite, it follows that $W_{e}$ contains at least two even
numbers or two odd numbers bigger than any element in the equivalence class
of some $i \leq x_{e} + y_{e}$ so that at some point we are able to
$R$-collapse such a pair $u,v$ if $P_{e}$ is still active. So if $W_e$
is infinite then it can not be a transversal of $R$.

By Lemma~\ref{lem:P-satisfied} it is also clear that $R\in \Sigma^{-1}_{a}$,
and the pair $\langle f, \gamma\rangle$ is a $\Sigma^{-1}_{a}$-approximation
to $R$. Indeed on pairs $u,v$ such that $u,v$ have the same parity the
characteristic functions of $R$ may change value on $\langle  u, v \rangle$
only once (from $0$ to $1$). On  pairs of different parity the characteristic
function may change value more times. Indeed, it can do so on $u, v$, with
say $u$ even and $v$ odd for the following reasons. A first possibility is
that for instance we already have $z \rel{R} v$ due to some $Q$-requirement
which has used the pair $(z,v)$ as witness, and some higher priority
$P$-requirement $R$-collapses $u$ and $z$ so that we get $u \rel{R} v$ (or we
already have $z \rel{R} u$  for some odd $z$, and now a higher priority $P$
requirement $R$-collapses $v$ and $z$): but this is compatible with the
relation being in $\Sigma^{-1}_{a}$, as witnessed by $\gamma(\langle u,v
\rangle, \_)$ which decreases from $a$ to $1$. A second possibility is that
the characteristic function of $R$ changes on $\langle u,v \rangle$  if this
value is moved by the diagonalizing strategy of some $Q$-requirement $Q_{e}$,
but then this is done with a number of changes compatible with the relation
being in $\Sigma^{-1}_{a}$, as witnessed by $\gamma(\langle u,v \rangle, \_)$
which mimics $\gamma_{e}(\langle u,v \rangle, \_)$.

\medskip

Finally, we consider the case when $\mathcal{X}=\Pi^{-1}_a$ for some notation
$a$ with $|a|_{\mathcal{O}}>1$. The construction is virtually the same as in
the dual case. We start with $R[0]=\Id_1$ with consequent definitions of $f$
and $\gamma$: $f(x, 0):=1$; $\gamma(x, 0):=a$ for every $x$, and $R[1]$
defined as follows:
\[
x \rel{R[1]} y \Leftrightarrow x=y \lor (\exists i)[\{x,y\}=\{2i, 2i+1\}],
\]
with consequent suitable definitions of $f$ and $\gamma$: in particular
\[
\gamma(\langle x, y\rangle,1)=
\begin{cases}
a, &\textrm{if $x=y \lor (\exists i)[\{x,y\}=\{2i, 2i+1\}]$},\\
2, &\textrm{otherwise,}
\end{cases}
\]
where we have used the assumption that $|a|_{\mathcal{O}}>1$. The
construction now mimics the one for the $\Sigma^{-1}_a$-case, essentially
starting from stage $1$. The idea is to make the first approximation to $R$
to look very much like $\Id$ (as in the proof of
Theorem~\ref{thm:properly-dark}), except for pairs of the type $(2i, 2i+1)$
for which we do not want to add an extra initial change which could spoil our
possibility of playing the extraction/enumerating game exploited by  the
diagonalization strategies. This allows a requirement $P_e$ to be able, if it
requires attention, to $R$-collapse pairs of numbers of the same parity that
have been set as non-$R$-equivalent at stage $1$ (when for this purpose we
have defined $\gamma$ to be $2$ for these pairs).
\end{proof}

\section{The problem of the existence of infima}
Let us fix a notation $a$ for a non-zero computable ordinal. The following
observation is straightforward.

\begin{fact}
The poset of degrees of $\Sigma^{-1}_a$-equivalence relations is not a
lower-semilattice.
\end{fact}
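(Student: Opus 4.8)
The plan is to exhibit two $\Sigma^{-1}_a$-equivalence relations whose set of common $\leq_c$-lower bounds is a strictly increasing chain with no greatest element, so that their $c$-degrees cannot have an infimum. I would take the witnesses to be $\Id$ and a dark ceer $D$: since $\Id$ is computable and $D$ is a ceer, both lie in $\Sigma^{-1}_1 \subseteq \Sigma^{-1}_a$ (alternatively one may use the dark relation produced by Theorem~\ref{thm:properly-dark}). First I would check that these two are $\leq_c$-incomparable. We have $\Id \not\leq_c D$ directly because $D$ is dark. And $D \not\leq_c \Id$ for the following reason, which I will reuse below: if $g \leq_c \Id$ via a computable $f$, then the computable set $T = \{x : (\forall y < x)\,f(y) \ne f(x)\}$ (the least elements of the $f$-preimages) is a transversal of $g$, and it is infinite whenever $g$ has infinitely many classes; so any $g$ with $g\leq_c \Id$ and infinitely many classes satisfies $\Id \leq_c g$, hence is light. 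Applying this with $g = D$ and using that $D$ is dark yields $D \not\leq_c \Id$.

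Next I would determine the common lower bounds. For every $n \geq 1$ we have $\Id_n \leq_c \Id$ trivially and $\Id_n \leq_c D$ because $D$ has infinitely many classes, so the relations $\Id_n$ — whose degrees form the strictly increasing chain $\Id_1 <_c \Id_2 <_c \cdots$ — are all $\leq_c$-below both $\Id$ and $D$.

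The argument then closes immediately. Suppose toward a contradiction that the $c$-degrees of $\Id$ and $D$ have an infimum, realized by some $\Sigma^{-1}_a$-equivalence relation $g$. Since each $\Id_n$ is a common lower bound, $\Id_n \leq_c g$ for all $n$, so $g$ has infinitely many classes; but also $g \leq_c \Id$, so by the observation above $\Id \leq_c g$, whence $g \equiv_c \Id$. Together with $g \leq_c D$ this gives $\Id \leq_c D$, contradicting the darkness of $D$. Hence $\Id$ and $D$ have no infimum in the poset of $\Sigma^{-1}_a$-degrees. The only real ingredient is the existence of a dark relation inside $\Sigma^{-1}_a$, which is already available; the rest is routine bookkeeping, which is why the statement is recorded as a Fact rather than a theorem.
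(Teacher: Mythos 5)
Your proof is correct, but it takes a different route from the paper. The paper disposes of this Fact in one line: it cites the known result from \cite{andrewssorbi2} that the ceers themselves do not form a lower-semilattice, and combines it with the observation that the degrees of ceers form an initial segment of the $\Sigma^{-1}_a$-degrees (so an infimum of two ceers computed in the larger poset would already be a ceer, hence an infimum among ceers). You instead re-prove the relevant instance from scratch: you take the explicit pair $\Id$ and a dark ceer $D$, note that every $\Id_n$ is a common lower bound, and use the elementary fact that any relation reducible to $\Id$ with infinitely many classes has a computable transversal and hence is $\equiv_c \Id$, so a putative infimum would force $\Id\leq_c D$, contradicting darkness. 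Your argument is sound (the transversal of least preimage points is indeed computable and infinite, and $\Id_n\leq_c R$ for any $R$ with at least $n$ classes via a finite lookup table), and it has the merit of being self-contained and of not even needing the initial-segment observation, since it constrains the hypothetical infimum $g$ directly through $g\leq_c\Id$, irrespective of where $g$ sits in the Ershov hierarchy; its only external input is the existence of a dark ceer. What the paper's phrasing buys instead is brevity and the stronger structural framing (ceers sit as an initial segment, so all known failures of meets among ceers transfer verbatim), which is why it is recorded as a Fact with a citation rather than argued in detail.
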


\begin{proof}
This follows immediately from the fact that the degrees of ceers forms an
initial segment of the degrees of $\Sigma^{-1}_a$-equivalence relations, and
on the other hand ceers are known not to form a lower-semilattice, see for
instance \cite{andrewssorbi2}.
\end{proof}

We will show however that we can find properly $\Sigma^{-1}_a$-equivalence
relations with no $\inf$, proving that the $c$-degrees of
$\Sigma^{-1}_a\smallsetminus \Pi^{-1}_a$ equivalence relations do not form a
lower-semilattice.

Andrews and Sorbi~\cite{andrewssorbi2} showed that many questions about the
degree structure of ceers can be fruitfully tackled by inspecting the
interplay between light and dark ceers. By lifting our focus to the class of
$\Delta^0_2$ equivalence relations, we need to introduce the following
relativized version of Definition \ref{defn:light}, which stands as a natural
companion of the analysis of the complexity of transversals of a given ceer
provided in \cite{reducibilityspectra}.

\begin{defn}\label{defn:mutuallydark}
Let $R$ be an equivalence relation, and $A$ any set of numbers. $R$ is
\emph{$A$-dark} if $R$ is infinite and has no infinite
\emph{$A$-transversal}, i.e. there is no infinite $A$-c.e. set $W_e^A$ such
that for all distinct $u,v \in W_e^A$, one has $u \cancel{\rel{R}} v$.

Two equivalence relations $R,S$ are \emph{mutually dark} if $R$ is $S$-dark,
and $S$ is $R$-dark.
\end{defn}

Notice:
\begin{prop}\label{prop:mutual-implies-dark}
If $R,S$ are mutually dark then they are dark.
\end{prop}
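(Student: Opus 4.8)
The plan is to prove the contrapositive in each coordinate: show that if $R$ is not dark then $R$ is not $S$-dark (and symmetrically for $S$), from which the statement follows immediately since mutual darkness requires $R$ to be $S$-dark. So suppose $R$ is not dark. Since mutual darkness presupposes $R$ is infinite (being $S$-dark includes being infinite), $R$ has infinitely many classes, so $R$ not dark means $R$ is light, i.e.\ $\Id \leq_c R$. By the remark following Definition~\ref{defn:light}, lightness is equivalent to the existence of a c.e.\ transversal $A$ of $R$.

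The key observation is then simply that a c.e.\ set is $S$-c.e.\ for \emph{any} oracle set $S$: relativization only adds computational power, so $W_e = W_{e'}^{S}$ for a suitable index $e'$ (uniformly). Hence the c.e.\ transversal $A$ of $R$ is in particular an infinite $S$-c.e.\ set all of whose distinct elements are pairwise $R$-inequivalent; that is, $A$ is an infinite $S$-transversal of $R$. But this contradicts $R$ being $S$-dark. Therefore $R$ must be dark. The argument for $S$ is identical with the roles of $R$ and $S$ interchanged, using that $S$ being $R$-dark forces $S$ to be dark.

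There is no real obstacle here — the only thing to be slightly careful about is bookkeeping the definitional unwinding: ``mutually dark'' unpacks into the conjunction of two $A$-darkness statements (Definition~\ref{defn:mutuallydark}), each of which includes the clause ``$R$ is infinite'', so we are entitled to assume infiniteness when we conclude lightness from non-darkness. The substantive content is the trivial but worth-stating fact that every c.e.\ set is $X$-c.e.\ for every oracle $X$, which is exactly what bridges the plain notion of lightness (Definition~\ref{defn:light}) and the relativized notion of an $S$-transversal.
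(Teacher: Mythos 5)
Your proof is correct and is essentially the paper's own argument: both hinge on the single observation that every c.e.\ set is $S$-c.e.\ for any oracle $S$, so a c.e.\ transversal witnessing lightness would already be an $S$-transversal contradicting $S$-darkness. The only difference is that you phrase it contrapositively while the paper states it directly; the content is identical.
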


\begin{proof}
In fact, if $R$ is such that there is a set $A$ such that $R$ has no infinite
$A$-c.e. set as a transversal then $R$ is dark. This follows from the fact
that a c.e. set is $A$-c.e. relatively to every oracle $A$.
\end{proof}

By Theorem \ref{thm:mutually-dark}, we prove that mutually dark equivalence
relations exist at all levels of the Ershov hierarchy. But before that, let
us offer a nice alternative characterization of mutual darkness. We first
need to relativize computable reducibility in an obvious way (as in
\cite{reducibilityspectra}, where the complexity of $\mathbf{d}$-computable
reductions is considerably explored): Let $\mathbf{d}$ be a Turing degree.
$R$ is  \emph{$\mathbf{d}$-computably reducible} to $S$, denoted
$R\leq_{\mathbf{d}} S$, if there is a total $\mathbf{d}$-computable function
$f$ such that, for all $x,y\in\omega$,
\[
x \rel{R} y \Leftrightarrow f(x)\rel{S}f(y).
\]

\begin{defn}
Define $R|_d S$ if $R,S$ are infinite and $R\nleq_{\deg_T(R)} S$ and
$S\nleq_{\deg_T(S)} R$.
\end{defn}

Hence, $R|_d S$ holds if the information of neither of the two equivalence
relations is enough by itself to compute a reduction into the other. The next
proposition shows that this is the same as asking that $R$ and $S$ are
mutually dark.

\begin{prop}\label{lem:mutually-impies-non-d}
$R |_d  S$ if and only if $R,S$ are mutually dark.
\end{prop}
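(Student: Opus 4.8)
\textbf{Proof plan for Proposition \ref{lem:mutually-impies-non-d}.}
The plan is to prove the two directions separately, by translating between reductions and transversals. The key observation is the standard correspondence, used throughout the paper, between a reduction $f\colon \Id \leq_c R$ (or its relativized analogue) and a transversal of $R$: if $f\colon \Id \leq_{\mathbf d} S$ then $\range(f)$ is an infinite $\mathbf d$-c.e. set on which $S$ takes pairwise distinct classes, i.e. an infinite $\mathbf d$-transversal of $S$; conversely, given an infinite $\mathbf d$-c.e. transversal $W$ of $S$, one obtains a $\mathbf d$-computable injection enumerating $W$ in increasing order, which is a reduction $\Id \leq_{\mathbf d} S$. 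So ``$S$ has an infinite $\mathbf d$-transversal'' is equivalent to ``$\Id \leq_{\mathbf d} S$''.

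First I would handle the contrapositive of ``mutually dark $\Rightarrow R|_d S$''. Suppose $R |_d S$ fails, say $R \leq_{\deg_T(R)} S$ via an $R$-computable $f$. Since $R,S$ are both infinite, $R$ has infinitely many classes, hence (using an $R$-oracle to thin out a list of representatives) $\Id \leq_{\deg_T(R)} R$; composing, $\Id \leq_{\deg_T(R)} S$, so $S$ has an infinite $R$-transversal, i.e. $S$ is not $R$-dark, so $R,S$ are not mutually dark. The symmetric case ($S \leq_{\deg_T(S)} R$) gives that $R$ is not $S$-dark in the same way. This proves one direction.

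For the converse, suppose $R,S$ are not mutually dark; without loss of generality $S$ is not $R$-dark, so there is an infinite $R$-c.e. transversal $W$ of $S$. As above this yields an $R$-computable reduction $g\colon \Id \leq_{\deg_T(R)} S$. It then suffices to produce an $R$-computable reduction $R \leq_{\deg_T(R)} S$, which will witness $\neg(R|_d S)$. The idea is the one already used in the proof of Theorem \ref{theo:dark-minimal}: since $g$ hits infinitely many $S$-classes but we need it to hit enough classes to carry the reduction, we instead argue directly — using an $R$-oracle, map each $x$ to $g(m)$ where $m$ is the least index with $x \rel{R} r_m$ in a fixed $R$-computable list $\{r_m\}$ of representatives of $R$-classes; distinctness of the $r_m$'s and the fact that $g$ is injective on the transversal gives $x \rel{R} y \Leftrightarrow g(m_x) \rel{S} g(m_y)$. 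Thus $R \leq_{\deg_T(R)} S$.

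The main obstacle, and the point deserving care, is the converse direction: a bare infinite $R$-transversal of $S$ only witnesses $\Id \leq_{\deg_T(R)} S$, not $R \leq_{\deg_T(R)} S$, and in general $\Id \leq_{\mathbf d} S$ does not give $R \leq_{\mathbf d} S$ (that would require something like surjectivity on classes). The fix is that with an $R$-oracle we can enumerate representatives of the (at most countably many) $R$-classes and peel off distinct $S$-targets from the transversal one class at a time, so that the resulting map is $R$-computable and reduces $R$ to $S$; one must check carefully that the transversal being infinite is exactly what keeps this peeling-off process total, and that the targets chosen remain $S$-inequivalent precisely because $W$ is a transversal. Aside from this, both directions are routine bookkeeping with oracles and the transversal/reduction dictionary.
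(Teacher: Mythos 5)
Your proof is correct and follows essentially the same route as the paper: both directions come down to the same transversal/reduction dictionary, with the relativized transversal of one relation turned into a reduction of the other by using the oracle to enumerate class representatives (your explicit list $\{r_m\}$ versus the paper's recursive definition of $f$ is just a difference in bookkeeping). The only (shared, harmless) omission is the trivial boundary case where $R$ or $S$ has finitely many classes, which the paper also sets aside by assuming both relations infinite.
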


\begin{proof}
Let $R,S$ be infinite equivalence relations.

Suppose that $R$ is not $S$-dark, and let $g$ be an $S$-computable function
which lists an infinite transversal of $R$. We claim in this case that $S
\leq_{\deg_{T}(S)} R$. Indeed, a suitable $S$-computable function reducing
$S$ to $R$ can be defined by induction as follows: $f(0):=g(0)$; and
\[
f(n+1):=
\begin{cases}
f(i), &\textrm{if $i$ least such that $i\le n$ and $i \rel{S} n+1$},\\
g(n+1), &\textrm{if there is no $i\le n$ such that $i \rel{S} n+1$}.
\end{cases}
\]
In a similar way one can show that if $S$ is not $R$-dark then $R
\leq_{\deg_{T}(R)} S$.

Vice versa suppose that $S \leq_{\deg_{T}(S)} R$, via a function $f \in
\deg_{T}(S)$. As $S$ is infinite, let $g$ be an $S$-computable function
listing an infinite transversal for $S$. Then the function $f \circ g$ is
$S$-computable and lists an infinite transversal of $R$. Thus $R$ is not
$S$-dark. In a similar way, one shows that if $R \leq_{\deg_{T}(R)} S$ then
$S$ is not $R$-dark.
\end{proof}

\begin{thm}\label{thm:mutually-dark}
If $\mathcal{X} \in \{\Sigma^{-1}_a: a \in \mathcal{O}, |a|_\mathcal{O}\ge
1\}\cup \{\Pi^{-1}_a: a \in \mathcal{O}, |a|_\mathcal{O}>1\}$ then there
exist mutually dark equivalence relations having only finite classes and
properly lying in $\mathcal{X}$.
\end{thm}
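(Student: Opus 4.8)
The plan is to build two equivalence relations $R$ and $S$ simultaneously by a finite-injury priority construction, in the same spirit as the proof of Theorem~\ref{thm:properly-dark}, but now with the darkness requirements relativized to the opposite relation. Since $R$ and $S$ are being built to have only finite classes, each of them is (uniformly) a $\Delta^0_2$ oracle, and an $R$-c.e.\ (resp.\ $S$-c.e.) set is $\Delta^0_2$; so we may fix a $\mathbf{0}'$-effective enumeration $(U_e)_{e\in\omega}$ of all $R$-c.e.\ sets and $(V_e)_{e\in\omega}$ of all $S$-c.e.\ sets, with the usual approximations $U_e[s]$, $V_e[s]$ that change as the oracle approximation changes. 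The requirements are, for every $e$: $F^R_e$ and $F^S_e$ saying that $[e]_R$ and $[e]_S$ are finite; $Q^R_e: R\neq E_e$ and $Q^S_e: S\neq E'_e$ diagonalizing against the relevant listing of $\mathcal{X}$-relations (one listing for $R$, one for $S$); and the new \emph{mutual darkness} requirements $P^R_e: U_e$ is not an infinite transversal of $R$, and $P^S_e: V_e$ is not an infinite transversal of $S$. Satisfying all $P^R_e$ and $P^S_e$ gives that $R$ is $S$-dark and $S$ is $R$-dark, hence by Proposition~\ref{prop:mutual-implies-dark} both are dark with infinitely many (finite) classes.

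The strategies are essentially those of Theorem~\ref{thm:properly-dark}, doubled and interleaved. As there, $Q^R_e$ picks a witness pair $(x_e,y_e)$ with $x_e$ even, $y_e$ odd (for the $R$-side), imposes a restraint that lower-priority requirements not disturb the $R$-classes of $i\le x_e+y_e$, and diagonalizes by enumerating/extracting the pair from $R$ following $\gamma_e$; symmetrically for $Q^S_e$. A $P^R_e$-requirement waits for $U_e$ (computed with the current $\mathbf{0}'$-approximation) to exhibit two distinct elements $u,v$ of the same parity, both above all currently-restrained numbers, and then $R$-collapses $u,v$; symmetrically $P^S_e$ collapses a same-parity pair into $S$. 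The same-parity trick guarantees, exactly as in Lemma~\ref{lem:P-satisfied}, that the characteristic function of $R$ (resp.\ $S$) changes at most once on same-parity pairs — from $0$ to $1$ — so the collapses never clash with the $\Sigma^{-1}_a$ (or $\Pi^{-1}_a$) bookkeeping, while the extra mind-changes needed for the $Q$-diagonalizations only occur on different-parity pairs where $\gamma$ has room to decrease. The priority order interleaves all six families; each requirement acts only finitely often, and the usual injury-counting argument (as after the construction in Theorem~\ref{thm:properly-dark}) shows every requirement is eventually satisfied.

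The main subtlety — and the reason the proof is ``more complicated'' as flagged in Remark~\ref{rem:theorems} — is that the enumerations $(U_e)$ and $(V_e)$ are only $\mathbf{0}'$-effective and, worse, depend on $R$ and $S$, which are themselves under construction. So the construction must be run relative to $\mathbf{0}'$ (exactly as in the proof of Theorem~\ref{theo:dark-minimal}, where $R$ was built $\mathbf{0}'$-effectively), and one must check that the $\mathbf{0}'$-approximations $U_e[s]$ stabilize on each element once the $R$-approximation below the relevant use has stabilized; since $R$ has finite classes and each requirement reaches a limit, this is the case on a tail of stages, so if $U_e$ is genuinely an infinite transversal of the final $R$ then cofinitely many of its elements are correctly seen and a same-parity pair of large elements eventually appears, letting $P^R_e$ act — contradiction. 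The verification that $R,S\in\mathcal{X}\smallsetminus\mathcal{X}^d$ (properness) is inherited verbatim from Theorem~\ref{thm:properly-dark}: the $Q$-requirements force $R\neq E_e$ for every $e$ in the dual-class listing, and the $\gamma$-functions witness membership in $\mathcal{X}$. The $\Pi^{-1}_a$ case ($|a|_\mathcal{O}>1$) is handled by the same modification as before — starting both $R$ and $S$ from the approximation that pairs up $\{2i,2i+1\}$ with $\gamma$-value $2$ — so that $P$-requirements can still collapse same-parity pairs. Finally, one notes (as in the remark following Theorem~\ref{theo:dark-minimal}) that every class of $R$ and of $S$ is in fact c.e., which is what allows the oracle enumerations to behave well.
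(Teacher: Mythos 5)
Your plan goes wrong at two points, and they are precisely the points where this theorem differs from Theorem~\ref{thm:properly-dark}. First, the darkness requirements are set up with the wrong oracles. You list the $R$-c.e.\ sets as $(U_e)$ and demand that no $U_e$ be an infinite transversal of $R$ itself (and symmetrically, no $S$-c.e.\ $V_e$ a transversal of $S$). That asserts that $R$ is $R$-dark, which is impossible: any infinite equivalence relation computes an infinite transversal of itself (with oracle $R$, greedily pick the least number inequivalent to all previously chosen ones), so some $U_e$ is an infinite transversal of $R$ no matter what you build, and your family $\{P^R_e\}$ cannot all be satisfied — nor would satisfying it give mutual darkness. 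The requirements must cross the oracles, as in the paper: $W_e^U$ is not an infinite transversal of $V$, and $W_e^V$ is not an infinite transversal of $U$; the $P$-strategy reads a same-parity pair $u,v$ out of $W_e$ computed with one relation as oracle and collapses that pair in the \emph{other} relation.

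Second, once the oracles are crossed, the genuinely new ingredient (the reason Remark~\ref{rem:theorems} calls this proof more complicated) is preserving the Turing computation that put $u,v$ into $W_e^U$: the collapse is performed in $V$, but later actions of other requirements may change $U$ below the use and eject $u,v$ from $W_e^U$, unsatisfying $P_e$. The paper handles this by having every requirement impose, besides the class restraints you describe, finite binary strings $r_{R,U},r_{R,V}$ that are initial segments of the current characteristic functions: the $P$-strategy picks an oracle string $\sigma\subset c_{U[s]}$ with $u,v\in W_e^{\sigma}$, freezes $\sigma$ as its restraint, and is reinitialized if a higher-priority requirement injures it. Your proposal has no such mechanism; requiring $u,v$ to be ``above all currently-restrained numbers'' does not protect the computation, whose use can exceed any fresh witness. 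Finally, the construction cannot be run relative to $\mathbf{0}'$ as you suggest: to get $R,S\in\Sigma^{-1}_a$ (needed for ``properly lying in $\mathcal{X}$'') one must produce \emph{computable} approximating pairs $\langle f,\gamma\rangle$, so the construction must be effective — which is exactly what working with $W_e^{\sigma}$ for finite strings $\sigma$ makes possible, whereas a $\mathbf{0}'$-effective enumeration of ``all $R$-c.e.\ sets'' for the not-yet-built $R$ is both circular and too weak for this purpose (also, $R$-c.e.\ sets for a $\Delta^0_2$ oracle are in general only $\Sigma^0_2$, not $\Delta^0_2$).
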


\begin{proof}
Let us start again with the case $\mathcal{X}=\Sigma^{-1}_a$, with
$|a|_\mathcal{O}\ge 1$. We build equivalence relations $U,V$ satisfying the
following requirements, for every $e \in \omega$:
\begin{align*}
  &F^{U}_{e}: \text{$[e]_{U}$ is finite,}\\
  &F^{V}_{e}: \text{$[e]_{V}$ is finite,}\\
  &P^U_e:  \text{$W_e^U$ is not an infinite transversal for $V$},\\
  &P^V_e:  \text{$W_e^V$ is
       not an infinite transversal for $U$},\\
  &Q^U_e:  U \ne E_e,\\
  &Q^V_e:  V \ne E_e,
\end{align*}
where  $(E_e)_{e \in \omega}$ is a listing of the $\Pi^{-1}_a$-sets, with
$\langle f_{e}, \gamma_{e}\rangle$ a $\Pi^{-1}_a$-approximation to $E_{e}$.
We build $U,V$ via defining $\Sigma^{-1}_a$-approximating pairs $\langle f^U,
\gamma^U \rangle$ and $\langle f^V, \gamma^V \rangle$ to $U, V$ respectively.

The \emph{priority ordering} of the requirements is
\begin{multline*}
F_{0}^{U}< F_{0}^{V}<Q_0^U < Q_0^V <P_0^U< P_0^V < \cdots \\
< F_{e}^{U}< F_{e}^{V} < Q_e^U < Q^V_e < P^{U}_e < P_e^V < \cdots.
\end{multline*}
We say that a requirement $R$ has \emph{priority position $e$}, if $e$ is the
position of $R$ in the above ordering: the priority position of $R$ will be
denoted by $e_{R}$.

\subsubsection*{Strategies for the requirements and their interactions}
The strategies for the requirements are essentially the same as the ones for
the ``corresponding'' requirements in the proof of
Theorem~\ref{thm:properly-dark}. The additional complication is due to the
fact that we have also sometimes to preserve certain Turing-computations. For
this reason we will view the restraint imposed at any stage by a requirement
$R$ as two finite binary strings $r_{R,U}$ and $r_{R,U}$: if $S \in \{U,V\}$
the string $r_{R,S}$ extends $r^-_{R,S}$ (i.e. $r_{R,S} \supseteq r^-_{R,S}$)
which represents the restraint inherited  at that stage by the higher
priority requirements (the string $r^-_{R,S}$ is empty if $R$ is the highest
priority requirement); in turn, $r_{R,S}$ is a string which lower priority
requirements are bound to preserve if $R$ is not re-initialized: this string
automatically becomes $r_{R,S} = r^-_{R',S}$ where $R'$ is the requirement
immediately following $R$ in the priority ordering. The strings $r^-_{R,S}$
and $r_{R,S}$ depend of course on the stage.

The strategy for $R=F^U_e$ (the one for $F^V_e$ is similar) sets up a
restraint requesting that no lower priority requirement change  $[e]_{U}$.

The strategy for $R=Q^U_e$ (the one for $Q^V_e$ is similar) works with a
suitable witness $(x^U_e, y^U_e) \in 2\omega \times (2 \omega+1)$ (consisting
as in the proof of Theorem~\ref{thm:properly-dark} of numbers of different
parity, say of the form $(2i,2i+1)$), and sets up a restraint to preserve the
$U$-equivalence classes of $x^U_e$ and $y^U_e$.

Let us now consider a $P$-requirement $R=P_e^U$ (the case $R=P_{e}^{V}$ is
similar). The strategy for $P^U_e$ consists in seeing if we can define $U,V$
so that there are distinct $u,v\in W_e^{U}$ of the same parity, and $u,v$ can
be $V$-collapsed without injuring higher priority restraints.

$Q$-requirements may be initialized as in the proof of
Theorem~\ref{thm:properly-dark}. A requirement $R$ \emph{requires attention
at stage $s+1$} if either
\begin{enumerate}
 \item $R$ is initialized; or
 \item one of the following holds, for some $e \in \omega$ and $S \in
     \{U,V\}$, where we assume $S=U$, the other case being similar and
     treated by interchanging the roles of $U$ and $V$:
 \begin{enumerate}
   \item $R=P_e^U$, $R$ is \emph{active} at the end of $s$ (i.e. it is
       not already the case that at $s+1$ there are already distinct
       numbers $u,v$ with $u,v \in W_e^{r^-_{R,U}}$ and
       $r^-_{R,V}(\langle u,v \rangle)\downarrow=1$), and there is a pair
       $(\sigma, \tau)$ of strings and a pair $(u,v)$ of distinct numbers
       of the same parity such that:
       \begin{enumerate}
         \item $ r^{-}_{R,U} \subseteq \sigma \subset c_{U[s]}$ (the
             characteristic function of $U[s]$), $u,v\in W_e^{\sigma}$;
         \item $r^{-}_{R,V}\subseteq \tau \subset
             c_{V[s]_{\col{(u,v)}}}$, and $V[s]_{\col{(u,v)}}$ and
             $V[s]$ give the same equivalence classes relatively to all
             $i < e_{R}$ (this is for the sake of not modifying the
             equivalence classes of any such $i$, so that $F^{S}_{i}$
             is not injured), and finally $V[s]_{\col{(u,v)}}$ and
             $V[s]$  give the same equivalence classes relatively to
             all currently defined $x_{i}^V, y_{i}^V$, with $i < e_{R}$
             (this is for the sake of not modifying the equivalence
             classes of any such $x_{i}^V, y_{i}^V$, so that the
             restraint imposed by $Q^{V}_{i}$ is not injured).
       \end{enumerate}

   \item $R=Q_e^U$ and $f^U(\langle x_e^U, y_e^U\rangle,s)=f_e(\langle
       x_e^U, y_e^U\rangle,s+1)$, where $(x_e^U,y_e^U)$ is the witness of
       $R$ at the end of stage $s$.
 \end{enumerate}
\end{enumerate}

\subsubsection*{The construction}
At stage $s$ we define approximations to $f^{S}(\_,s)$ and $\gamma^{S}(\_,s)$
for $S \in \{U,V\}$ ($U[s]$ and $V[s]$ will be the equivalence relations
having $f^{U}(\_,s)$ and $f^{V}(\_,s)$ as characteristic functions,
respectively), and the approximations to the various parameters, including
$r_{R,S}$ and $r^{-}_{R,S}$. We will often omit to mention the stage to which
a given parameter is referred, if this is clear from the context. Unless
otherwise specified at each stage each parameter keeps the same values as at
the previous stage. It will be clear from the construction that at each stage
$s\ge 1$, $U[s]$ and $V[s]$ are equivalence relations with only finite
equivalence classes and such that almost all equivalence classes are
singletons.

\subsubsection*{Stage $0$}
Initialize all $Q$-requirements; no $P$-requirement is inactive; for $S\in
\{U,V\}$ define $f^{S}(x,0):=0$ and $\gamma^{S}(x,0):=a$ for all $x$. Define
$S[0] := \emptyset$.

\subsubsection*{Stage $1$}
For $S\in \{U,V\}$ define $S[1]:= \Id$, with suitable definitions of
$f^{S}(\_,1)$ and $\gamma^{S}(\_,1)$ (similar to the definitions of $f$ and
$\gamma$ at stage $1$ in the proof of Theorem~\ref{thm:properly-dark}).

\subsubsection*{Stage $s+1\ge 2$}
Consider the highest priority requirement $R$ that requires attention.
(Notice that there is always such a requirement since at each stage almost
all $Q$-requirements are initialized). Action:
\begin{enumerate}
  \item If $R$ is initialized, then $R=Q_e^{S}$ for some $e$ and $S \in
      \{U,V\}$. Assume $S=U$: the other case is similar, and is treated by
      interchanging the roles of $U$ and $V$. Choose a new fresh witness
      $(x_e^{U}, y_e^{U})$ for $R$, i.e.\ $(x_e^{U}, y_e^{U})=(2i,2i+1)$
      where $i$ is bigger than all numbers so far used in the construction:
      we may as well suppose that still $f_e(\langle x_e^U, y_e^U
      \rangle,s+1)=1$, but $f^{U}(\langle x_e^U, y_e^U \rangle,s)=0$, and
      $\gamma_e(\langle x_e^U, y_e^U \rangle,s+1)= \gamma^{U}(\langle
      x_e^U, y_e^U \rangle,s)=a$. Let $r_{R,U}$ be the least string
      extending $r^{-}_{R,U}$, such that $r_{R,U}(\langle x_e^U, y_e^U
      \rangle)\downarrow$, and $r_{R,U} \subseteq U[s]$; let
      $r_{R,V}:=r^{-}_{R,V}$. Let $U[s+1]:=U[s]$, and $V[s+1]:=V[s]$.
        \item Otherwise:
  \begin{enumerate}
    \item $R=P_e^{S}$ for some $e$ and $S \in \{U,V\}$. Suppose that
        $S=U$, the other case being similar, and treated by interchanging
        the roles of $U$ and $V$. Then pick the least quadruple $(\sigma,
        \tau, u, v)$ such that the pairs $(\sigma,\tau)$ and $(u,v)$ are
        as in the definition of requiring attention; declare $P_e$
        inactive (and it will remain inactive as long as the requirement
        is not initialized); define $r_{R,U}:=\sigma$ and $r_{R,V}:=\tau$
        (notice that if eventually $\tau$ is an initial segment of $V$
        then the numbers $u,v$ are $V$-collapsed for the sake of $R$). On
        pairs $\langle x,y\rangle$ that are $V$-collapsed following this
        action define $f^{V}(\langle x,y\rangle,s+1):=1$ and
        $\gamma^{V}(\langle x,y\rangle,s+1):=1$. Define $U[s+1]:=U[s]$
        and $V[s+1]:=V[s]_{\col{(u,v)}}$.

    \item $T=Q_e^{S}$: assume that $S=U$, the other case being similar,
        and treated by interchanging the roles of $U$ and $V$. Define
        $f^{U}(\langle x^{U}_e, y^{U}_e \rangle,s+1) \ne f_e(\langle
        x^{U}_e, y^{U}_e \rangle,s+1)$ and $\gamma^{U}(\langle x^{U}_e,
        y^{U}_e\rangle, s+1):=\gamma_e(\langle x^{U}_e, y^{U}_e\rangle,
        s+1)$, so that we diagonalize $U$ against $E_e$ at witness
        $(x_e^U,y_e^U)$.  Define $V[s+1]:=V[s]$ and $U[s+1]$ to be the
        equivalence relation such that $x_e^U \rel{U[s+1]} y_e^U$ if and
        only if $x_e^U \cancel{\rel{U[s]}} y_e^U$ and coinciding with
        $U[s]$ on all other pairs. Set $r_{R,V}:=r_{R,V}^-$, and define
        $r_{R,U}$ to be the least string defined on $\langle x_e^U,
        y_e^U\rangle$ and such that $r_{R,V}^- \subseteq r_{R,U} \subset
        c_{U{s+1}}$.
  \end{enumerate}
\end{enumerate}
At the end of the stage, initialize all lower priority $Q$-requirements.

\subsubsection*{The verification}
A straightforward inductive argument shows again that for every requirement
$R$ there is a least stage $t_{R}$ such that no $R'$ of higher priority than
$R$ nor $R$ itself requires attention or acts at any $s \ge t_{R}$. Hence all
parameters for $R$, including witnesses $(x_{i}^{S}, y_{i}^{S})$ and the
restraints $r_{R,S}, r^-_{R,S}$, for $S \in \{U,V\}$, reach a limit.

An argument similar to Lemma~\ref{lem:P-satisfied} enables us to conclude
that all $U$- and $V$-equivalence classes are finite and each $R$ is
satisfied. Let us check this in the particular case $R=P_e^U$. First of all
notice that whenever we $V$-collapse two numbers $u,v$ for the sake of $R$,
then the $V$-classes of these two numbers will never be separated again:
indeed, no future choice of a string $\sigma=r_{R',V}$, for some
$P$-requirement $R'=P_{i}^V$, can do this when choosing $\sigma$ so that
$u',v' \in W_i^\sigma$, for some pair $u',v'$, because such a $\sigma$ is
chosen so that $T_\sigma$ is the same equivalence relation as at the previous
stage and thus it does not introduce any new change in the corresponding
characteristic function; on the other hand no $Q$-requirement can separate
the $V$-classes of $u,v$ because by the analogue of
Lemma~\ref{lem:P-satisfied}, $Q$-requirements may only move pairs of numbers
of different parity. Now, if $W_e^{U}$ is infinite then by the finiteness of
the $V$-equivalence classes, it enumerates a distinct pair $u,v$ of the same
parity which are not restrained from being $V$-collapsed by higher priority
requirements (which request not to modify the finitely many $V$-equivalence
classes they use). Then there certainly are $\sigma, \tau$ and a stage $t$
such that $R$ is never initialized after $t$, $\sigma$ is an initial segment
of the characteristic function of $U$ and $u,v \in W_{e}^{\sigma}$ at any
stage $s \ge t$, $\tau(\langle u,v \rangle)=1$ and $\tau$ is
$(R,V)$-compatible at any stage $s \ge t$. Then at any such stage $s \ge t$
the pairs $(\sigma, \tau)$ and $(u,v)$ are eligible to make $P_{e}$ require
attention if $P_e$ is still active. In this case the action requested by the
construction at such a stage makes $u,v \in W_{e}^{U}$ and $u \rel{V} v$,
achieving that $W_{e}^{U}$ is not an infinite transversal for $V$.

An argument similar to Lemma~\ref{lem:P-satisfied} enables us also to
conclude that $U,V \in \Sigma^{-1}_{a}$ and that the pairs $\langle f^{U},
\gamma^{U}\rangle$ and $\langle f^{V}, \gamma^{V}\rangle$ are
$\Sigma^{-1}_{a}$-approximations to $U, V$, respectively.

\smallskip

Finally,  the case when $\mathcal{X}=\Pi^{-1}_a$ for some notation $a$ with
$|a|_{\mathcal{O}}>1$ is treated exactly as in
Theorem~\ref{thm:properly-dark}.
\end{proof}

As anticipated by Remark \ref{rem:theorems}, Theorem \ref{thm:properly-dark}
immediately follows from Theorem \ref{thm:mutually-dark} and Proposition
\ref{prop:mutual-implies-dark}.

\smallskip

Our goal now is to prove that no pair of mutually dark equivalence relations
can have infimum. To show this, we make use of the operation $R \mapsto R
\oplus \Id_1$, that has been greatly exploited in \cite{andrewssorbi2}. This
operation can be viewed as an inverse of the operation on equivalence
relations obtained by collapsing two equivalence classes, and leading from an
equivalence relation $R$ to $R_{\col(x,y)}$. In fact, if $R$ is an
equivalence relation and $z$ is a number such that $[z]_R$ is not a
singleton, then define $R_{[z]}$ to be the equivalence relation
\[
x \rel{R_{[z]}} y \Leftrightarrow x =y \lor
[x\rel{R}y \,\&\, z \notin \{x,y\}].
\]
So we see that in getting $R_{[z]}$ instead of collapsing two equivalence
classes we do exactly the opposite, i.e. splitting an equivalence class into
two classes. We have:

\begin{lemma}
For every $z$ such that $[z]_R$ is not a singleton, $R\oplus \Id_1 \equiv_c
R_{[z]}$.
\end{lemma}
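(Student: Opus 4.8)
The plan is to exhibit explicit reductions in both directions. Write $R\oplus\Id_1$ as the equivalence relation on $\omega$ in which we identify the left copy with the pairs $\langle x,0\rangle$ carrying the relation $R$ and the right copy consists of the single extra class $\{\langle 0,1\rangle\}$; fix a standard computable bijection between $\omega$ and $(\omega\times\{0\})\cup\{\langle 0,1\rangle\}$ so that we may speak of reductions to and from $R\oplus\Id_1$ directly. Since $[z]_R$ is not a singleton, pick (by the mere existence of an element) a number $w\neq z$ with $w\rel{R}z$; this $w$ will play the role of a ``landing spot'' for the split-off class.

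First I would define $g\colon R_{[z]}\leq_c R\oplus\Id_1$. The idea is: every element other than $z$ is sent to its left-copy image, while $z$ itself — which in $R_{[z]}$ now forms its own singleton class — is sent to the right-copy point. Concretely, $g(x)=\langle x,0\rangle$ if $x\neq z$, and $g(z)=\langle 0,1\rangle$. To check this is a reduction: if $x,y\neq z$ then $x\rel{R_{[z]}}y\iff x\rel{R}y\iff \langle x,0\rangle$ and $\langle y,0\rangle$ are $(R\oplus\Id_1)$-related; and $z$ is $R_{[z]}$-equivalent only to itself, matching the fact that $\langle 0,1\rangle$ is $(R\oplus\Id_1)$-equivalent only to itself. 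Here it is important that no $x\neq z$ satisfies $x\rel{R_{[z]}}z$, which holds by definition of $R_{[z]}$, so $g$ respects non-equivalence as well.

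Next I would define $h\colon R\oplus\Id_1\leq_c R_{[z]}$. Here the point is that $R_{[z]}$ has ``room'': the class $[z]_R\setminus\{z\}$ (nonempty, containing $w$) and the singleton $\{z\}$ are two distinct $R_{[z]}$-classes, which can absorb the two copies of $R$ and $\Id_1$. Set $h(\langle x,0\rangle)=x$ if $x\neq z$, set $h(\langle z,0\rangle)=w$ (so both $z$ and $w$ on the left copy go to $w$, which is legitimate since $z\rel{R}w$ means they were in the same $R$-class to begin with), and set $h(\langle 0,1\rangle)=z$. One then verifies: two left-copy elements are $(R\oplus\Id_1)$-related iff they are $R$-related iff their $h$-images (using $w$ in place of $z$) are $R_{[z]}$-related — the substitution of $w$ for $z$ is harmless because on elements of $[z]_R$ the relation $R_{[z]}$ agrees with $R$ except at the single point $z$, and $w\neq z$; and the right-copy point $\langle 0,1\rangle$ is related only to itself, matching that $h(\langle 0,1\rangle)=z$ is $R_{[z]}$-related only to itself.

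I do not expect a serious obstacle: the only subtlety is bookkeeping about where $z$ goes and making sure that in $R_{[z]}$ the element $z$ really is isolated from the rest of its old $R$-class, which is exactly the defining clause $z\notin\{x,y\}$. The argument is uniform and purely combinatorial; both $g$ and $h$ are plainly total computable once a code for $z$ (and some $w\rel{R}z$ with $w\neq z$) is fixed, so no effectivity issue arises. It is worth remarking that this lemma makes precise the sense in which $R\mapsto R\oplus\Id_1$ inverts the collapse operation, and it will be the engine for the subsequent no-infimum arguments for mutually dark relations.
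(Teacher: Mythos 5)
Your proposal is correct and matches the paper's proof essentially verbatim: in one direction send $z$ to the extra $\Id_1$-class and everything else to its copy, and in the other direction send the $\Id_1$-part to $z$ and the copy of $z$ to a chosen $w\ne z$ with $w\rel{R}z$, exactly as the paper does. The only difference is your coding of $R\oplus\Id_1$ with a singleton extra class instead of the usual odd-numbers class, which is harmless up to $\equiv_c$.
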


\begin{proof}
To show $R_{[z]} \leq_c R\oplus \Id_1$, consider the computable function $f$
where
\[
f(x)=
\begin{cases}
1, &\text{if $x=z$},\\
2x,&\text{if $x \ne z$}.
\end{cases}
\]
To show $R\oplus \Id_1 \leq_c R_{[z]}$, pick $y\ne z$ in the equivalence class
of $z$, and consider the computable function $g$,
\[
g(x)=
\begin{cases}
\frac{x}{2}, &\text{if $x$ even and $\frac{x}{2} \ne z$},\\
y, &\text{if $x$ even and $\frac{x}{2}= z$},\\
z,&\text{if $x$ is odd}.
\end{cases}
\]
\end{proof}

The following lemma has been proved in \cite{andrewssorbi2} for ceers (see
Observation~4.2 and Lemma~4.6 of \cite{andrewssorbi2}), but the same proof
works whatever equivalence relation $R$ one starts with.

\begin{lemma}\label{lem:singleton}
If $R$ is dark then $R< R \oplus \Id_1$.
\end{lemma}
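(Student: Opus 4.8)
Proof proposal for Lemma~\ref{lem:singleton} ($R$ dark $\Rightarrow R<_c R\oplus\Id_1$).

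The plan is to show both that $R\leq_c R\oplus\Id_1$ (which is trivial) and that $R\oplus\Id_1\not\leq_c R$, the latter being the real content. First I would dispatch the easy direction: the map $x\mapsto 2x$ is a computable reduction $R\leq_c R\oplus\Id_1$, so it only remains to rule out $R\oplus\Id_1\leq_c R$. Suppose toward a contradiction that $g\colon R\oplus\Id_1\leq_c R$. The idea is that $R\oplus\Id_1$ contains, on the ``$\Id_1$-side'', one extra singleton class $[1]_{R\oplus\Id_1}$ which sits outside the copy of $R$ living on the even numbers; a reduction $g$ of $R\oplus\Id_1$ into $R$ must send this extra class somewhere, and then the restriction of $g$ to the even numbers, composed with a suitable correction, will produce a c.e.\ transversal of $R$, contradicting darkness.

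More precisely, let $e=g(1)$, so $[e]_R$ is the $R$-class hitting the image of the $\Id_1$-side. Consider the computable function $h$ defined on $\omega$ by first forming $g'(x):=g(2x)$ (a computable reduction of $R$ into $R$, since the even part of $R\oplus\Id_1$ is a copy of $R$), and then taking the $h$-orbit of a fixed starting point in the style of the proof of Lemma~\ref{lemma:finite-minimal}. The key point is that $g'$ is a self-reduction of $R$ whose range misses the class $[e]_R$: indeed $1$ is not $R\oplus\Id_1$-equivalent to any even number, so $g(2x)\not\rel{R}g(1)=e$ for every $x$, i.e.\ $\range(g')\cap[e]_R=\emptyset$. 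Now iterate: set $x_0:=e$ and recursively let $x_{k+1}:=g'(\text{(a representative of }[x_k]\text{ found effectively)})$, or more cleanly run the $h$-orbit argument of Lemma~\ref{lemma:finite-minimal} with $h:=g'$ and base point a number in $[e]_R$. Since $g'\colon R\leq_c R$ and $\range(g')$ avoids $[e]_R$, the orbit $\{h^k(b):k\in\omega\}$ (for $b$ chosen so that $b\rel R e$) consists of pairwise non-$R$-equivalent elements: if $h^k(b)\rel R h^\ell(b)$ with $k<\ell$ then $b\rel R h^{\ell-k}(b)$, so $e\rel R h^{\ell-k}(b)\in\range(h)=\range(g')$, contradicting $\range(g')\cap[e]_R=\emptyset$. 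This orbit is an infinite c.e.\ transversal of $R$, contradicting darkness of $R$.

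The main obstacle is making sure the self-reduction $g'$ of $R$ genuinely has range disjoint from $[e]_R$ and that the orbit then lands in infinitely many distinct $R$-classes; this is exactly the mechanism isolated in Lemma~\ref{lemma:finite-minimal}, and I expect the cleanest writeup to simply cite that lemma's orbit technique verbatim, with the only new observation being the disjointness $\range(g')\cap[e]_R=\emptyset$, which comes for free from the structure of $R\oplus\Id_1$ (the extra $\Id_1$-class is not $R\oplus\Id_1$-related to anything in the $R$-copy). Once $R\oplus\Id_1\not\leq_c R$ is established, together with $R\leq_c R\oplus\Id_1$ we conclude $R<_c R\oplus\Id_1$.
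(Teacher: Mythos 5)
Your proof is correct and follows essentially the same route as the paper: the paper factors the argument through the notion of self-fullness (dark $\Rightarrow$ every self-reduction of $R$ hits all classes, via exactly your orbit argument), and then notes that self-fullness rules out $R\oplus\Id_1\leq_c R$, which is precisely your observation that a reduction $g$ restricted to the even part gives a self-reduction of $R$ missing $[g(1)]_R$. You have merely inlined the two steps, so no substantive difference.
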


\begin{proof}
We recall how the proof goes. First of all one shows that if $R$ is dark then
$R$ is \emph{self-full} i.e. any reduction $f:R\leq_c R$ must have range that
intersects all equivalence classes: indeed, if $f$ were a reduction missing
say the equivalence class of $a$, then the orbit $\orb_f(a)$ would be easily
seen to provide a transversal for $R$. Next, one easily shows that if $S$ is
a self-full equivalence relation then $S<_c S\oplus \Id_1$ (in fact it is shown
in \cite{andrewssorbi2} that $S$ is self-full if and only if $S<_c S\oplus
\Id_1$).
\end{proof}

\begin{lemma}\label{lem:no-inf}
If $R,S \in \Sigma^{-1}_a$ and $R |_d S$ then $R,S$ do not have $\inf$ in the
$\Sigma^{-1}_a$-equivalence relations.
\end{lemma}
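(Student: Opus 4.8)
The plan is to argue by contradiction: suppose $T$ is a $\Sigma^{-1}_a$-equivalence relation with $T \equiv_c \inf\{R,S\}$. The first step is to dispense with the case where $T$ is light. If $\Id \leq_c T$, then since $T \leq_c R$ and $T\leq_c S$ we get $\Id\leq_c R$ and $\Id\leq_c S$, contradicting Proposition~\ref{prop:mutual-implies-dark}, which tells us $R,S$ are dark. So $T$ is dark (note $T$ has infinitely many classes, since otherwise $\Id_n \equiv_c T$ for some $n$ and $\Id_n$ is not the infimum of two dark relations — any $\Id_{n+1}$ lies below both). By Lemma~\ref{lem:singleton}, $T <_c T\oplus\Id_1$. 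The strategy is now to show that $T\oplus\Id_1$ is \emph{also} a lower bound for $\{R,S\}$, which contradicts $T$ being the infimum.

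So the heart of the matter is: assuming $T\leq_c R$ and $T\leq_c S$ with $T$ dark, show $T\oplus\Id_1 \leq_c R$ and $T\oplus\Id_1\leq_c S$. By symmetry it suffices to do this for $R$. Fix a reduction $f\colon T\leq_c R$. Since $T$ is dark and $T\leq_c R$, the relation $R$ cannot have a $T$-c.e. transversal — but here we want something about plain computability. The key point is to use that $R$ is $S$-dark, hence dark, hence (by the self-fullness argument recalled in the proof of Lemma~\ref{lem:singleton}) any reduction $R\leq_c R$ is surjective on classes; more importantly, $R$ being dark means $R$ has no c.e.\ transversal, so $R$ is infinite but its classes cannot be effectively separated. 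The plan is: since $T\leq_c R$ via $f$ and $T$ is dark with infinitely many classes, $\range(f)$ meets infinitely many $R$-classes; I want to find a number $z$ with $[z]_R$ non-singleton and $z$ outside the "range" of the reduction in the appropriate sense, so that $T\oplus\Id_1\equiv_c R_{[z]}\leq_c R$. Concretely, if some $R$-class $[z]_R$ is non-singleton and $z\notin\range(f)$ while $[z]_R$ still meets $\range(f)$, then $f$ already witnesses $T\leq_c R_{[z]}$ (splitting off the singleton $\{z\}$ doesn't affect $f$-images), and $R_{[z]}\equiv_c R\oplus\Id_1$ by the lemma, and $R\oplus\Id_1 \geq_c R$... wait, that is the wrong direction.

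Let me recalibrate: the correct target is $T\oplus\Id_1\leq_c R$, i.e.\ we want $R$ to sit \emph{above} $T\oplus\Id_1$, which since $T\oplus\Id_1\equiv_c R_{[z]}$ requires a reduction from $R_{[z]}$ into $R$ — but $R_{[z]}\leq_c R$ is trivial via the identity-like map (splitting a class only makes the relation finer on one point, and $R$ already separates everything it needs to). Actually $R_{[z]}\leq_c R$ holds by the map sending $z\mapsto$ (a fixed non-$z$ element of $[z]_R$) wait no that collapses. Hmm — $R_{[z]}\leq_c R$ is \emph{not} automatic. Instead the right move is: $T\oplus\Id_1\leq_c R$ iff there is a reduction, and we build it directly from $f$ together with one extra point. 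Since $T$ is dark it is self-full, so $\range(f)$ meets every $R$-class that it meets "cofinally"; the real obstruction — and I expect this to be \textbf{the main difficulty} — is locating a non-singleton $R$-class together with a point in it not used as an $f$-image, OR showing $R$ has infinitely many non-singleton classes. If $R$ has infinitely many non-singleton classes we can pick one, say $[z]_R$ with $z, z'\in[z]_R$ distinct and $z'\in\range(f)$ (possible by self-fullness/infinitude), and define the reduction $T\oplus\Id_1\to R$ by sending the $\Id_1$-point to $z$ and using $f$ on the $T$-part after rerouting $f^{-1}(z)$'s class to $z'$; this reduces $T\oplus\Id_1$ to $R$. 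If instead $R$ has only finitely many non-singleton classes, then $R$ is essentially $\Id$ on a cofinite set, so $R$ is light — contradicting darkness of $R$. So the dichotomy closes: in all cases $T\oplus\Id_1\leq_c R$, and symmetrically $T\oplus\Id_1\leq_c S$, contradicting $T = \inf\{R,S\}$ since $T <_c T\oplus\Id_1$. This completes the argument; the step I expect to require the most care is verifying that the rerouted map genuinely reduces $T\oplus\Id_1$ (not some coarser relation) to $R$, using self-fullness of $T$ to guarantee the single extra class is not accidentally merged with an $f$-image class.
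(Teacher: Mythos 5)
Your overall skeleton matches the paper's: assume $T$ is an infimum, note $T$ is infinite and dark, show $T\oplus\Id_1$ is also a lower bound, and contradict $T<_c T\oplus\Id_1$ (Lemma~\ref{lem:singleton}). But the heart of the matter---proving $T\oplus\Id_1\leq_c R$---has a genuine gap: you only ever invoke plain darkness of $R$ and $T$, never the hypothesis $R|_d S$, and plain darkness cannot suffice. Indeed, if ``$T,R$ dark and $T\leq_c R$ imply $T\oplus\Id_1\leq_c R$'' were true, taking $T=R$ would give $R\oplus\Id_1\leq_c R$ for every dark $R$, contradicting the self-fullness of dark relations recalled in the proof of Lemma~\ref{lem:singleton}. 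Concretely, both branches of your dichotomy fail. In the first branch, sending the extra $\Id_1$-class to $z$ where $z\rel{R}z'$ and $z'\in\range(f)$ makes the extra class $R$-equivalent to the image of some $T$-class, so the map is not a reduction of $T\oplus\Id_1$; no ``rerouting'' inside $[z]_R$ can repair this---what you need is an $R$-class \emph{wholly disjoint} from $\range(f)$, and a point $z\notin\range(f)$ whose class meets $\range(f)$ is useless. In the second branch, ``finitely many non-singleton classes $\Rightarrow$ $R$ essentially $\Id$ on a cofinite set $\Rightarrow$ $R$ light'' is false: the finitely many non-singleton classes may be infinite sets, and even when infinitely many $R$-classes are singletons, the set of such points need not contain an infinite c.e.\ set---that is exactly what darkness permits. (A smaller slip: a finite $T$ need not satisfy $T\equiv_c\Id_n$ in the Ershov hierarchy, cf.\ the relations $F_X$; your parenthetical argument that $\Id_{n+1}\leq_c R,S$ while $\Id_{n+1}\not\leq_c T$ is what actually shows $T$ is infinite.)

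The missing idea is precisely where the paper uses $R|_d S$ together with the \emph{second} reduction $g\colon T\leq_c S$: one shows $f$ must omit an entire $R$-class. If $f$ hit every $R$-class, the $R$-computable function $f^*(y)=\mu x\,[f(x)\rel{R}y]$ would reduce $R\leq_{\deg_T(R)}T$, and then $g\circ f^*$ would witness $R\leq_{\deg_T(R)}S$, contradicting $R|_d S$. With a witness $y_R$ of an omitted class in hand, mapping the $T$-part by $f$ and the extra class to $y_R$ gives $T\oplus\Id_1\leq_c R$, and symmetrically (using $S|$-side of the hypothesis) $T\oplus\Id_1\leq_c S$; then your intended contradiction with Lemma~\ref{lem:singleton} goes through. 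Without this use of mutual darkness the lemma's conclusion is simply not reachable from the ingredients you allow yourself.
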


\begin{proof}
Let $R,S \in \Sigma^{-1}_a$ be such that $R |_dS$, and suppose that $T$ is an
infimum of $R,S$ in  $\Sigma^{-1}_a$, i.e. $T \leq_c R,S$ and for every $Z$
such that $Z\leq_c R,S$ we have $Z \leq_c T$. It is clear that $T$ is not
finite. Let $f,g$ be computable functions reducing $T$ to $R$ and $S$. We
claim that $f$ and $g$ do not hit respectively all the $R$-classes and all
the $S$-classes, i.e. there exist numbers $y_{R}, y_{S}$ such that for every
$x$, $f(x) \cancel{\rel{R}} y_{R}$ and $g(x) \cancel{\rel{S}} y_{S}$,
respectively. Suppose for instance that for every $y$ there exists $x$ such
that $f(x) \rel{R} y$. Then it is easy to define an $R$-computable function
$f^{*}$ reducing $R\leq_c T$: just set $f^{*}(y)=x$ where $x$ is the least
number such that $f(x) \rel{R} y$. It follows that $g\circ f^{*}$ is an
$R$-computable function reducing $R\leq_{\deg_{T}(R)}S$ contradicting that
$R|_{d}S$. In a similar way one shows that the range of $g$ avoids some
$S$-classes.

We now derive a contradiction by showing that $T\oplus \Id_1 \leq_c R,S$ and
applying Lemma \ref{lem:singleton}. We use the existence of $y_{R}$ to show
that a suitable reducing computable function $f^{-}$ is given by
\[
f^{-}(x)=
\begin{cases}
f(x), &\textrm{if $x$ even},\\
y_{R}, &\textrm{if $x$ odd}.
\end{cases}
\]
A similar argument shows that $T \oplus \Id_1 \leq_c S$.
\end{proof}

We are now in a position to prove:

\begin{thm}
For $a\in \mathcal{O}$ such that $|a|_{\mathcal{O}}>0$, there are two properly
$\Sigma^{-1}_{a}$ equivalence relations without infimum.
\end{thm}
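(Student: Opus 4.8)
The plan is to read the theorem off the machinery assembled in this section. First I would apply Theorem~\ref{thm:mutually-dark} with $\mathcal{X}=\Sigma^{-1}_a$ (legitimate, since $|a|_{\mathcal{O}}>0$ means $|a|_{\mathcal{O}}\geq 1$) to obtain two equivalence relations $R,S$ that are mutually dark, have only finite classes, and lie properly in $\Sigma^{-1}_a$. By Proposition~\ref{lem:mutually-impies-non-d}, mutual darkness of $R,S$ is exactly $R|_d S$, so Lemma~\ref{lem:no-inf} applies and gives that $R,S$ have no infimum in the poset of $\Sigma^{-1}_a$-equivalence relations. Since $R$ and $S$ are themselves properly $\Sigma^{-1}_a$, this already establishes the statement.

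To get the sharper fact that $R,S$ have no infimum even \emph{within} the sub-poset of $\Sigma^{-1}_a\smallsetminus\Pi^{-1}_a$-equivalence relations (so that this sub-poset fails to be a lower-semilattice, as announced), I would revisit the proof of Lemma~\ref{lem:no-inf}. Suppose $T\in\Sigma^{-1}_a\smallsetminus\Pi^{-1}_a$ were such an infimum, witnessed by $f\colon T\leq_c R$ and $g\colon T\leq_c S$. Exactly as in Lemma~\ref{lem:no-inf}, $R|_d S$ forces $f$ to miss some $R$-class and $g$ to miss some $S$-class (in the degenerate case where $T$ has finitely many classes this is automatic, since $R$ and $S$ have infinitely many), so the functions $f^{-},g^{-}$ built there witness $T\oplus\Id_1\leq_c R$ and $T\oplus\Id_1\leq_c S$. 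The one new point is that $T\oplus\Id_1$ is again \emph{properly} $\Sigma^{-1}_a$: as a set of pairs it is $m$-equivalent to $T$ (one only appends a decidable piece and relocates the old pairs computably), and both $\Sigma^{-1}_a$ and $\Pi^{-1}_a$ are closed under $\leq_m$, so $T\oplus\Id_1$ sits at the same level of the Ershov hierarchy as $T$. Hence $T\oplus\Id_1$ lives in the sub-poset, and since $T$ is the infimum of $R,S$ there, we get $T\oplus\Id_1\leq_c T$. Now derive the contradiction: if $T$ is infinite then, being $\leq_c R$ with $R$ dark, $T$ is dark, so Lemma~\ref{lem:singleton} gives $T<_c T\oplus\Id_1$; if $T$ has only finitely many, say $k$, classes then $T\oplus\Id_1$ has $k+1$ classes and cannot reduce to $T$. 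Either way $T\oplus\Id_1\leq_c T$ is impossible.

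The conceptual work is already done — constructing mutually dark, properly $\Sigma^{-1}_a$ relations (Theorem~\ref{thm:mutually-dark}), identifying $|_d$ with mutual darkness (Proposition~\ref{lem:mutually-impies-non-d}), and the $R\mapsto R\oplus\Id_1$ argument that destroys candidate infima (Lemmas~\ref{lem:no-inf} and~\ref{lem:singleton}) — so there is no genuinely new construction here. The only mild obstacle is the Ershov-hierarchy bookkeeping: checking that the $\oplus\Id_1$ move preserves \emph{properly} $\Sigma^{-1}_a$ and not merely $\Sigma^{-1}_a$, and remembering that (unlike in the ceers setting of \cite{andrewssorbi2}) a candidate infimum could a priori have finitely many classes, a case that must be dismissed separately by the trivial class-count. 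I expect nothing harder than this.
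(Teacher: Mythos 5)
Your first paragraph is exactly the paper's proof: invoke Theorem~\ref{thm:mutually-dark} to get mutually dark relations $R,S$ lying properly in $\Sigma^{-1}_a$, identify mutual darkness with $R|_d S$ via Proposition~\ref{lem:mutually-impies-non-d}, and conclude by Lemma~\ref{lem:no-inf}; so for the stated theorem you are correct and on the same route. Your second paragraph goes beyond what the paper writes down: Lemma~\ref{lem:no-inf} only rules out an infimum in the poset of all $\Sigma^{-1}_a$-equivalence relations, whereas the introduction also claims nonexistence of an infimum inside $\Sigma^{-1}_a\smallsetminus\Pi^{-1}_a$, and your argument correctly supplies that missing case: the finite-classes degenerate case is dismissed as you say (in the full poset it is excluded because every $\Id_n$ reduces to $R$ and $S$, but $\Id_n$ is not properly $\Sigma^{-1}_a$, so the separate treatment is genuinely needed), and the observation that $T\oplus\Id_1$ is $m$-equivalent to $T$ as a set of pairs, together with closure of $\Sigma^{-1}_a$ and $\Pi^{-1}_a$ under $\leq_m$, keeps the diagonalizing relation inside the sub-poset before Lemma~\ref{lem:singleton} (or the class count) yields the contradiction. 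So the proposal matches the paper on the theorem itself and adds a correct, slightly stronger conclusion at no extra conceptual cost.
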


\begin{proof}
By Theorem~\ref{thm:mutually-dark} and Lemma~\ref{lem:mutually-impies-non-d}
let $R,S$ be two equivalence relations lying properly in $\Sigma^{-1}_{a}$
such that $R|_{d}S$. Then by Lemma~\ref{lem:no-inf} $R$ and $S$ have no
$\inf$.
\end{proof}

Having shown that  no pair of mutually dark $c$-degrees can have infimum, it is
natural to ask whether one can obtain the same with dark equivalence
relations. The next result answers negatively this question: there are
infinitely many levels of the  Ershov hierarchy which properly contain a pair
of dark equivalence relations with infimum.  This contrasts to the case of
ceers (where no pair of dark ceers have infimum, see \cite{andrewssorbi2})
and thus vindicates the idea that mutual darkness is the correct analogous of
darkness for $\Delta^0_2$ equivalence relations.

\begin{thm}\label{thm:dark-with-inf}
\begin{enumerate}
\item There are dark equivalence relations $R,S \in \Pi^{-1}_2$ such that
    $R$ and $S$ have an infimum.
		
\item For every $a\in\mathcal{O}$ such that $|a|_{\mathcal{O}}>0$, there are dark $\Delta^0_2$ equivalence
    relations $E,F\notin \Sigma^{-1}_a$ such that $E$ and $F$ have an
    infimum.
\end{enumerate}
\end{thm}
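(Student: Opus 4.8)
The plan is to realize both infima as relations of the form $T\oplus\Id_2$, where $T$ is a carefully chosen dark ceer and $R,S$ (resp.\ $E,F$) are obtained from $T$ by attaching a two-class gadget coming from a minimal pair of sets. Fix a dark ceer $T$ which is moreover \emph{dense}, in the sense of the ceer $S_0$ used in the proof of Theorem~\ref{theo:dark-minimal} (every c.e.\ set meeting infinitely many $T$-classes meets all of them; such a $T$ automatically has all classes noncomputable). For part~(1) fix in addition c.e.\ sets $X,Y$ forming a Turing minimal pair, hence also an $m$-minimal pair, with $X\nleq_T Y$ and $Y\nleq_T X$, and set
\[
R:=T\oplus F_X,\qquad S:=T\oplus F_Y,\qquad T^\ast:=T\oplus\Id_2,
\]
with $F_X$ as in~\eqref{equ:F_X}. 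The routine facts come first: $R,S,T^\ast$ are $\Pi^{-1}_2$ (a ceer is $\Sigma^{-1}_1\subseteq\Pi^{-1}_2$, each $F_X$ is $\Pi^{-1}_2$, and $\oplus$ preserves $\Pi^{-1}_2$); they are dark, since joining a dark relation with a relation having finitely many classes is again dark (a c.e.\ transversal would, by the pigeonhole argument, induce a c.e.\ transversal of $T$); $R$ and $S$ are not ceers (some $R$-class is co-c.e.\ but not c.e., being a computable copy of $\overline X$); and $R,S$ are $\leq_c$-incomparable, because a reduction $R\leq_c S$ must map the two-class summand $F_X$ of $R$ into $S$, it cannot do so inside the $T$-summand of $S$ (else $X$ or $\overline X$ would be both c.e.\ and co-c.e., hence computable), so it maps $F_X$ inside the $F_Y$-summand, forcing $X\leq_m Y$ or $X\leq_m\overline Y$ and contradicting $X\nleq_T Y$.

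The substance is the claim $\inf(R,S)=T^\ast$. One inclusion is immediate: $T^\ast=T\oplus\Id_2\leq_c R$ by mapping the $T$-summand identically to the $T$-summand of $R$ and the two $\Id_2$-classes to a fixed element of $X$ and a fixed element of $\overline X$ (legitimate, since $X$ is coinfinite so $\overline X\neq\emptyset$); symmetrically $T^\ast\leq_c S$. For the other direction let $Z$ be a common lower bound, $Z\leq_c R$ via $h$ and $Z\leq_c S$ via $h'$. The set $D:=\{z: h(z)$ lies in the $F_X$-summand of $R\}$ is computable and is a union of at most two $Z$-classes, since that summand has only two classes; likewise $D':=\{z: h'(z)$ lies in the $F_Y$-summand$\}$. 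If $D$ is a union of at most one class, then collapsing the $F_X$-coordinate of $\range(h)$ to a single point — possible since $D$ is computable — shows $Z\leq_c T\oplus\Id_1\leq_c T^\ast$, and symmetrically for $D'$; so assume $D=C_1\sqcup C_2$ and $D'=C_1'\sqcup C_2'$ are unions of exactly two classes. Each of $C_1,C_2$ is $m$-reducible to one of $X,\overline X$ (according to the $F_X$-class it is sent into) and each of $C_1',C_2'$ to one of $Y,\overline Y$. The key step is that all four of these classes are in fact \emph{computable}: a class sent into the gadget on both sides is $\leq_T X$ and $\leq_T Y$, hence computable by the minimal pair; a class sent into the gadget on one side but into the $T$-summand on the other is then the preimage of a $T$-class on that side, and combining this with its $m$-reduction to $X$ (or $\overline X$) and the density — and further genericity — built into $T$ again yields computability. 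Granting this, the assignment sending $D'=C_1'\cup C_2'$ onto the two $\Id_2$-classes (computable, since the $C_i'$ are computable) and $\omega\setminus D'$ to the $T$-summand via $h'$ reduces $Z$ to $T^\ast$; so $T^\ast$ is the greatest common lower bound. (That $T^\ast<_cR,S$, making the infimum genuinely nontrivial, is $R,S$ being dark together with Lemma~\ref{lem:singleton}.)

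For part~(2) one repeats everything with $X,Y$ taken to be a Turing minimal pair lying \emph{outside} $\Sigma^{-1}_a$; such a pair exists by running the standard minimal-pair construction and interleaving, for each $e$, the requirement that $X$ (and $Y$) differ from the $e$-th $\Sigma^{-1}_a$-set, handled by appointing a fresh witness and flipping $X$ on it each time the approximation $\langle f_e,\gamma_e\rangle$ catches up — affordable because $\gamma_e$ may descend only finitely often below $a$ in $\leq_{\mathcal O}$, exactly as in the proof of Theorem~\ref{thm:properly-dark}, and harmless to the (protected) computations of the minimal-pair strategies. Setting $E:=T\oplus F_X$ and $F:=T\oplus F_Y$, these are dark $\Delta^0_2$ equivalence relations with $\inf(E,F)=T\oplus\Id_2$ by the argument above, and $E,F\notin\Sigma^{-1}_a$ because $X$ occurs as a section of $F_X$, so $F_X\notin\Sigma^{-1}_a$, whence $E\notin\Sigma^{-1}_a$ as $\Sigma^{-1}_a$ is closed downward under $\leq_c$ (compose approximations with the reduction).

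The main obstacle is precisely the ``key step'' above: controlling the at most two $Z$-classes per side that can slip into the finite gadget and proving them computable. Two-sided escape falls immediately to the minimal pair, but one-sided escape must be closed by combining the $m$-reduction into $X$ or $\overline X$ with structural properties of $T$, and getting this clean is what pins down the precise specification of $T$; the safest route is to build $T$, $X$, and $Y$ together by one priority construction, additionally arranging that each $T$-class forms a minimal pair with $X$ and with $Y$ (and, for part~(2), carrying the $\Sigma^{-1}_a$-diagonalization in the same construction). Checking that these diagonalization requirements coexist with the minimal-pair requirements is then routine, each ingredient being standard.
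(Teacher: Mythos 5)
Your overall architecture is the same as the paper's: attach the two-class gadgets $F_X,F_Y$ of a minimal pair to a fixed dark ceer and claim the infimum is that ceer joined with $\Id_2$, and for part (2) replace $X,Y$ by a $\Delta^0_2$ Turing-minimal pair avoiding $\Sigma^{-1}_a$ (the paper proves this existence statement carefully, as its Theorem~\ref{minpair}). But the proposal has a genuine gap exactly at what you yourself call the ``key step'': the case of a lower-bound class that lands in the gadget on one side and in the ceer summand on the other. Your two suggested fixes are not carried out and are not routine. ``Density and further genericity built into $T$'' is never specified, and density in the sense of $S_0$ actually works against you: it forces every $T$-class to be noncomputable, so the preimage of a $T$-class under the second reduction is merely a noncomputable c.e.\ set $m$-below $X$ (or $\overline X$), and nothing you have stated makes that computable. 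The fallback -- building $T,X,Y$ jointly so that \emph{every} $T$-class forms a Turing minimal pair with $X$ and with $Y$, while $T$ stays dark and, for part (2), the $\Sigma^{-1}_a$-avoidance is preserved -- is an infinite family of minimal-pair requirements entangled with darkness requirements; calling it ``routine, each ingredient being standard'' is precisely the part of the theorem that would need a proof.

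The paper closes this case with a short structural argument that needs no special properties of the dark ceer $Q$. The two classes of a lower bound $E$ that map into the gadget of $R=F_X\oplus Q$ are exactly $g^{-1}(2X)$ and $g^{-1}(2\overline X)$; they partition the computable set $g^{-1}(2\omega)$, so either both are computable (done) or both are noncomputable, one c.e.\ and one co-c.e. In part (1) the co-c.e.\ noncomputable one cannot be sent, on the $S$-side, into any class of the ceer summand (all $Q$-classes are c.e., and a reduction's preimage of a class is the whole $E$-class, which would then be c.e.\ and co-c.e., hence computable); so it must land on $2\overline Y$, giving a set $m$-below both $X$ and $Y$ and contradicting $m$-minimality. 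In part (2) the same complement trick plus the Turing minimal pair handles both subcases. If you prefer to stay inside your own framework, there is also a cheap repair you missed: take the dark ceer with all classes finite (such ceers exist, and the paper builds such relations at every level in Theorem~\ref{thm:properly-dark}); then the preimage of any ceer-class is computable and your one-sided case evaporates, leaving only the two-sided case, which the minimal pair kills. As written, though, your argument for the crucial inclusion ``every lower bound reduces to $T\oplus\Id_2$'' is incomplete, and the machinery you propose to complete it is both unexecuted and heavier than necessary.
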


\begin{proof}
(1)\ Let $(\mathbf{x}_m,\mathbf{y}_m)$ be a minimal pair of c.e.
$m$-deg\-re\-es. We choose c.e. sets $X\in\mathbf{x}_m$ and
$Y\in\mathbf{y}_m$. We also choose a dark ceer $Q$.

We will show that the equivalence relations $R := F_X\oplus Q$ and $S :=
F_Y\oplus Q$ have infimum $T = \Id_2\oplus Q$, where $F_X,F_Y$ are as in
Lemma~\ref{lemma:F_X}. It is clear that $T$ is a lower bound for $R$ and $S$.
Furthermore, by Lemma~\ref{lemma:F_X} both $R$ and $S$ are $\Pi^{-1}_2$
relations. In addition, the darkness of $Q$ ensures that $R$ and $S$ are
dark.

Assume that $E$ is a lower bound of $R$ and $S$. Consider a reduction
$g\colon E \leq_c R$. Then exactly one of the following three cases holds:
\begin{itemize}
\item[(a)] $\range(g)$ does not contain even numbers. Then the
    function $g_1\colon x \mapsto [g(x)/2]$ is a reduction from $E$ to $Q$,
    and we have $E\leq_c Q \leq_c T$.
	
\item[(b)] There is only one class $[w]_E$ such that $g([w]_E)\subseteq
    2\omega$. Then the set $[w]_E$ is computable, and the function
\[		g_2(x) := \left\{
			\begin{array}{ll}
				0, & \text{if~} x\rel{E} w,\\
				g(x), & \text{otherwise};
			\end{array}
		\right.
\]
gives a reduction from $E$ to $\Id_1\oplus Q$. Thus, $E\leq_c \Id_1
\oplus Q \leq_c T$.
	
\item[(c)] There are two different classes $[u]_E$ and $[v]_E$ such that
    $g([u]_E\cup [v]_E)\subseteq 2\omega$. Note that if $x\cancel{E} u$ and
    $x\cancel{E} v$, then $g(x)$ must be an odd number.

    We distinguish two cases. If the class $[u]_E$ is computable, then it
    is easy to show that $E\leq_c \Id_2\oplus Q$.	Assume that $[u]_E$ is
    non-computable. Without loss of generality, suppose that $[u]_E$ is a
    co-c.e. set and $[v]_E$ is c.e. Hence, $[u]_E\leq_m \overline{X}$.
    Recall that $E\leq_c S$, and the relation $S$ contains only one
    non-computable co-c.e. class, namely, the $S$-class $\{ 2z \,\colon
    z\in \overline{Y}\}$. Thus, we deduce that $[u]_E \leq_m \overline{Y}$.
    Hence, the choice of the sets $X$ and $Y$ implies that the set $[u]_E$
    must be computable (as its complement would be $m$-re\-du\-cib\-le to
    both $X$ and $Y$, and thus would be computable), which gives a
    contradiction.
\end{itemize}
In each of the cases above, we showed that $E\leq_c T$, therefore, $T$ is the
greatest lower bound of $R$ and $S$.

(2) The proof of the second part is similar to the first one, modulo the
following key modification: One needs to choose $\Delta^0_2$ sets $X$ and $Y$
such that $X,Y \not \in \Sigma^{-1}_a$ and the $m$-degrees $\deg_m(X)$ and
$\deg_m(Y)$  form a minimal pair. The existence of such sets is guaranteed by
the following more general theorem: we prove that, in any $\Sigma$-level of
the Ershov hierarchy that corresponds to a successor ordinal (i.e., having
notation $2^{a}$ for some $a$), there are $T$-degrees that form a minimal
pair and do not  contain any set of a lower $\Sigma$-level. Note also that
any minimal pair with respect to $\leq_T$ is also a minimal pair with
respect to $\leq_m$.

\begin{thm}\label{minpair}
For every notation $a\in\mathcal{O}$, there are $\Sigma^{-1}_{2^a}$ sets
$X,Y$ such that $\deg_T(X)$ and $\deg_T(Y)$ form a minimal pair and do not
contain $\Sigma^{-1}_a$ sets.
\end{thm}

\begin{proof}

This theorem is a combination of the two following results. The first one is
Selivanov's result \cite{Seliv1985} about properness of every level in the
Ershov hierarchy relative to Turing reducibility. The second one is Yates'
construction of a minimal pair of c.e.\ Turing degrees (see, e.g.,
\cite{Soare:Book}). Below we sketch how these two constructions can be
combined together.

We satisfy the following infinite sequence of requirements (recall that, by
Posner's trick, we can consider the same index in an $N$-requirement, see
\cite{Soare:Book}):

\begin{align*}
  &N_e: \Phi_e^X = \Phi_e^Y = f \text{\ \ is total}
  \Rightarrow f \text{\ \ is computable},\\
  &Q^X_e: X \neq \Psi_e^{E_e} \vee E_e \neq \Theta_e^X,\\
  &Q^Y_e: Y \neq \Psi_e^{E_e} \vee E_e \neq \Theta_e^Y,
\end{align*}
where $\{ \Phi_e \}_{e \in \omega}$ is an effective list of all Turing
functionals, and $\{ \Psi_e, \Theta_e, E_e \}_{e \in \omega}$ is an effective
list of all possible triples consisting of a pair of Turing functionals
and a $\Sigma^{-1}_a$ set. As before, we consider $\langle f_{e},
\gamma_{e}\rangle$ as a $\Sigma^{-1}_a$-approximation to $E_{e}$. We also
build pairs $\langle f^X, \gamma^X \rangle$ and $\langle f^Y, \gamma^Y
\rangle$ in order to get $X$ and $Y$.

\subsubsection*{Strategies for the requirements and their interactions}
In the following we freely adopt language and terminology (length-agreement
functions, tree of strategies, use functions, etc.: in particular
a small Greek letter denotes the use function of a
Turing functional denoted by the corresponding capital Greek letter) which belong to the
jargon of the (infinite) priority method of proof: for details the reader is
referred to \cite{Soare:Book}.

A $Q$-requirement in isolation can be satisfied using Cooper's idea in
\cite{CooperPhD71}, as adapted by Selivanov \cite{Seliv1985} to the
infinite levels of the Ershov hierarchy. Without loss of generality, we
consider the strategy for $Q^X_e$, also we assume that each functional is
nondecreasing by stage and increasing by argument. For the sake of
convenience consider the following length-agreement function:
\begin{multline*}
l(X,e,s) = \mu z ( \forall x \leq z \ \ (X(x)[s] = \\
\Psi_e^{E_e}(x)[s] \wedge \Theta^X_e \upharpoonright
\psi_e(x)[s] = E_e \upharpoonright \psi_e(x)[s])).
\end{multline*}
Thus the strategy for $Q^X_e$ works as follows:

\begin{itemize}

\item[(1)] Choose a ``big'' witness $x_e = x $ at stage $s_0$, thus
    $f^X(x,s_0) =0$ and $\gamma^X(x,s_0) =2^a$.

\item[(2)] Wait for a stage $s_1 > s_0$ such that $x \leq l(X,e,s_1)$.

\item[(3)] ``Put'' $x$ into $X$, namely define $f^X(x,s_1+1) :=1$ and
    $\gamma^X(x,s_1+1) :=a$.

\item[(4)] Wait for a stage $s_2 > s_1$ such that $x \leq l(X,e,s_2)$.

\item[(5)] Thus at stage $s_2$ we have that $f_e(z,s_1) \neq f_e(z,s_2)$
    and $a \geq_\mathcal{O} \gamma_e(z,s_1) >_\mathcal{O} \gamma_e(z,s_2)$
    for some element $z < \psi_e(x)[s_1]$. Moreover, from this stage on we
    have (in case $X$ changes only at $x$) that if $x \leq l(X,e,s)$ then
    $f^X(x,s) =0$ if and only if $f_e(z,s) = f_e(z,s_1)$. This means that
    by putting and extracting $x$ we force $z$ to go in and out from $E_e$.

\item[(6)] Thus, we define $f^X(x,s_2+1) :=0$ and $\gamma^X(x,s_2+1) :=
    \gamma_e(z,s_2)$.

\item[(7)] At later stages $s$, if we see that $x \leq l(X,e,s)$ then
    define $f^X(x,s+1) := 1 - f^X(x,s) $ and $\gamma^X(x,s+1) :=
    \gamma_e(z,s)$. Clearly, if later $f_e(x,t) \neq f_e(x,s)$ for some
    $t>s$ then $\gamma_e(z,t) <_\mathcal{O} \gamma_e(z,s)$. Thus, at stage
    $t$, if $x \leq l(X,e,t)$ then we can act the same as at stage $s$ and
    define (in particular) $\gamma^X(x,t+1) = \gamma_e(z,t) <_\mathcal{O}
    \gamma^X(x,s+1)$.

\end{itemize}

Notice:

\begin{itemize}

\item If $Q^X_e$ can keep $X$ restrained below $\theta_e(\psi_e(x))$ then
    it is enough for the win.

\item The function $\gamma^X(x,s)$ always has the possibility to be defined
    as notation of a smaller ordinal unless $\gamma_e(z,s) = 1$ (note that
    when $\gamma_e(z,s)$ turns into 1 the function $\gamma^X(x,s)$ has the
    possibility to have a last change, thus after this change for any $t >
    s$ we never see $x < l(X,e,t)$ and the strategy becomes satisfied).

\end{itemize}
Therefore, each $Q^X_e$-strategy changes $X(x)$ finitely many times and wins
(if some initial part of $X$ is restrained). The restraint can easily be
achieved by initialization of lower priority strategies. The strategy for
$Q^Y_e$ works in the same way. Thus, each $Q$-strategy is a finitary strategy
and has only one outcome $fin$ on the tree of strategies.

An $N$-requirement in isolation is satisfied by waiting for an expansionary
stages and restraining either $\Phi_e^X$ or $\Phi_e^Y$. The definition of an
$e$-expansionary stage is as usual. Namely, let the length-agreement function
be defined as follows:
\[
l(e,s) = \mu z ( \forall u \leq z\ \
\Phi_e^{X}(u)[s]\downarrow = \Phi_e^{Y}(u)[s]\downarrow),
\]
and define a stage $s$ to be $e$-expansionary if $l(e,s) > l(e,t)$ for all $t
< s$. The goal of a strategy $N_e$ is to build a computable function: thus
for a given $u$ it waits for the first $e$-expansionary stage which covers
$u$. At this stage $s$ the values of $\Phi_e^{X}(u)[s]\downarrow$ and
$\Phi_e^{Y}(u)[s]$ are the same as $\Phi_e^{X}(u)$ and $\Phi_e^{Y}(u)$
(unless $N$ is initialized). Thus, each $N$-strategy has two outcomes $\infty
< fin$ on the tree of strategies, where it is satisfied vacuously below
outcome $fin$ and build a computable function below outcome $\infty$.

The tree of strategy is a subtree of $\{ \infty < fin \}^{< \omega}$ with the
usual ordering of nodes. At level $k=2e$ we put copies of the strategy $N_e$,
at level $k=4e+1$ we put copies of $Q^X_e$, and at level $k=4e+3$ we put
copies of $Q^Y_e$.

Similar to the construction of a minimal pair we analyze the most problematic
case of interaction between strategies, namely the work of an $N$-strategy
with several $Q$-strategies below its infinite outcome. So, let
$\eta^{\smallfrown}\infty \subset \alpha_1 \subset \alpha_2 \subset \cdots
\subset \alpha_k$. Assume that each $Q$-strategy $\alpha_i$ has current
witness $x_i$. Then it holds that $x_i < \theta_i(\psi_i(x_i)) < x_{i+1} <
\theta_{i+1}(\psi_{i+1}(x_{i+1}))$ for $1 \leq i < k$. Now, if $\alpha_i$
acts at stage $s$ (in particular, either $f^X(x_i,s)$ or $f^Y(x_i,s)$ is
changed) then $x_{i+1}$ and all greater witnesses are canceled. Also we can
visit any of these nodes $\alpha_j$, where $j<i$, only when we get the next
$\eta$-expansionary stage $t>s$, which means that $\Phi^X_e(u)[s] \downarrow
= \Phi^Y_e(u)[s] \downarrow = \Phi^Y_e(u)[t] \downarrow = \Phi^X_e(u)[t]
\downarrow$ for any $u < l(e,s)$. Therefore, if some $\alpha_j$, where $j<i$,
acts at stage $s$ then $\eta$ continues to win, moreover $\alpha_i$ is
initialized and $\alpha_j$ continues to win too since $\theta_j(\psi_j(x_j))$
was smaller than $x_i$ (note also that it does not matter whether $\alpha_j$
works with $X$- or $Y$-side).

Below we sketch construction and verification,  which can easily be expanded
to more formal versions.

\emph{The construction.}  At stage $s+1$ we construct a computable
approximation (with the help of substages) of the true path on the tree of
strategies. Starting from the root node we perform actions relative to the
visited node and decide its outcome, then we visit the node below the outcome
and continue until we reach the node of length $s$. When proceeding to the
next stage, we initialize all nodes to the right to, or below, the visited
one.

If we work with an $N$-strategy $\eta$ at substage $t+1$ then we check
whether the stage $s+1$ is $\eta$-expansionary. If it is so then $\eta$ has
outcome $\infty$, otherwise $\eta$ has outcome $fin$.

If we work with a $Q^X$-strategy (the case of a $Q^Y$-strategy is totally
similar) $\alpha$ at substage $t+1$ then we assign a big witness $x_{\alpha}
= x$ (bigger then any number mentioned so far) and initialize all nodes below
$\alpha$. If witness $x$ was already assigned then check whether $x \leq
l(X,\alpha,s)$. If the answer is ``no'' then just take outcome $fin$; if the
answer is ``yes'' then initialize all nodes below $\alpha$ and also define
$f^X(x,s+1) :=1$ and $\gamma^X(x,s+1) :=a$; moreover if the answer ``yes''
already happened at least one time (after assigning $x$) then there is the
least $z$ such that $f_{\alpha}(z,s_1) \neq f_{\alpha}(z,s_0)$ and
$\gamma_{\alpha}(z,s_1) <_\mathcal{O} \gamma_{\alpha}(z,s_0)
\leq_{\mathcal{O}} a$, where $s_0+1$ is a stage such that $\gamma^X(x,s_0+1)
=a$ and $\gamma^X(x,s_0) =2^a$ (namely, it is the stage when we initiated the
attack using $x$) and $s_1+1$ is the next stage with answer ``yes'', then
define $f^X(x,s+1) :=1 - f^X(x,s)$ and $\gamma^X(x,s+1) :=
\gamma_{\alpha}(z,s)$, and initialize all strategies below $\alpha$. In all
of the cases the outcome is $fin$.

\emph{The verification.} The true path $TP$, defined as the leftmost path on
the tree of strategies visited infinitely often, clearly exists. It remains
to show that each requirement is satisfied by a strategy on the true path.
However, most of the arguments have been already presented in the previous
discussions. Also, by induction it is easy to see that each strategy is
initialized and initializes other strategies only finitely many times.

If $\eta \in TP$ is an $N$-strategy then it clearly satisfies the
corresponding requirement. Namely, waiting for $\eta$-expansionary stages
allows to correctly see the value of the computation (also we assume that it
happens after stage $s_0$ after which $\eta$ is not initialized). If $\alpha
\in TP$ is a $Q^X$-strategy, then let $s_0$ be a stage after which $\alpha$
is not initialized. Henceforth, after assigning the witness $x$ the strategy
$\alpha$ either sees $x > l(X,\alpha,s)$, or several times it sees $x \leq
l(X,\alpha,s)$ (which immediately forces $f^X(x,s)$ to be changed by the
construction). As previously argued, the case $x \leq l(X,\alpha,s)$ can
happen only finitely many times (we always have $\gamma_{\alpha}(z,s) <
\gamma^X(x,s)$ which allows at least one more change for $\gamma^X(x,s)$).
\end{proof}

Going back to the proof of item (2) of Theorem~\ref{thm:dark-with-inf}, we
can now reason as in $(1)$ by using the sets $X,Y$ constructed in the
previous theorem.
\end{proof}

\section{The problem of the existence of suprema}
In this section we consider the problem of when  a given pair of $\Delta^0_2$
equivalence relations has a supremum. The problem  is somehow more delicate
than that the one concerning infima, discussed in the previous section. This
is because a priori the existence of a supremum depends on the level of the
Ershov hierarchy that we choose to consider: E.g., consider $R,S\in
\Sigma^{-1}_a$ and $a<_{\mathcal{O}} b$. Since $R\oplus S \in \Sigma^{-1}_a$
is an upper bound of $\{ R,S\}$, if $T$ is a supremum of $R$ and $S$ at the
$\Sigma^{-1}_b$ level, then $T\in \Sigma^{-1}_a$ as well and $T$ is the
supremum at the $\Sigma^{-1}_a$ level. On the other hand, $R$ and $S$ can
have $\sup$ inside $\Sigma^{-1}_a$, but not have $\sup$ inside
$\Sigma^{-1}_b$. Therefore, we split the problem in two parts: we first
provide a necessary condition for the nonexistence of suprema in
$\Delta^0_2$, and then we restrict the focus to the $\Sigma^{-1}_a$
equivalence relations.

\subsection{Nonexistence of suprema in $\Delta^0_2$}
The next theorem shows that mutual darkness forbids the existence of suprema
in $\Delta^0_2$.

\begin{thm}\label{theo:mut-dark-no-sup}
If  $R,S\in \Delta^0_2$ are mutually dark, then they have no $\sup$ in
$\Delta^0_2$.
\end{thm}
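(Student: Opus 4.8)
The plan is to assume, towards a contradiction, that some $T\in\Delta^0_2$ is a supremum of $R$ and $S$, and then to manufacture a $\Delta^0_2$ upper bound of $\{R,S\}$ that $T$ fails to reduce to. The starting point is that $R\oplus S$ is an upper bound of $\{R,S\}$, so $T\leq_c R\oplus S$ via a computable $h$. Note that $R\oplus S$ is dark: an infinite transversal of $R\oplus S$ would, by the Pigeonhole Principle, restrict to an infinite c.e.\ transversal of $R$ or of $S$, contradicting that $R,S$ are dark (which they are, by Proposition~\ref{prop:mutual-implies-dark}). Since $T$ is infinite ($R\leq_c T$) and $T\leq_c R\oplus S$, $T$ is dark as well, hence self-full by (the proof of) Lemma~\ref{lem:singleton}: every reduction $T\leq_c T$ hits all $T$-classes.

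Next I would use $h$ to decompose $T$. Put $T_R:=h^{-1}(2\omega)$ and $T_S:=h^{-1}(2\omega+1)$; these are computable, they partition $\omega$, and — since $h$ maps each $T$-class into a single $(R\oplus S)$-class, which is all-even or all-odd — each $T$-class lies entirely inside $T_R$ or entirely inside $T_S$. Hence $T\equiv_c T_1\oplus T_2$ with $T_1:=T\upharpoonright T_R$ and $T_2:=T\upharpoonright T_S$, and composing with $h$ gives $T_1\leq_c R$ and $T_2\leq_c S$. The workhorse is the following observation, in which mutual darkness is consumed: if $C$ is a computable set that is a union of infinitely many classes of an equivalence relation $X$ and $X\upharpoonright C\leq_c Y$ via $p$, then, enumerating $C$ and (using $Y$ as an oracle, which decides $p(c)\rel{Y}p(c')$) accepting a representative of each newly seen $X\upharpoonright C$-class, one obtains an infinite $Y$-c.e.\ transversal of $X$; so $X$ is not $Y$-dark (Definition~\ref{defn:mutuallydark}). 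Applying this twice I would establish: (i) $T_1$ and $T_2$ are both infinite — e.g.\ if $T_2$ had only finitely many classes, then a reduction $S\leq_c T\equiv_c T_1\oplus T_2$ would push cofinitely many $S$-classes into the $T_1$-part, so a computable union of infinitely many $S$-classes reduces to $T_1\leq_c R$, whence $S$ is not $R$-dark, contradiction; and (ii) writing $f\colon R\leq_c T$ and $g\colon S\leq_c T$, the map $f$ sends all but finitely many $R$-classes into $T_R$ and $g$ sends all but finitely many $S$-classes into $T_S$ (otherwise $f^{-1}(T_S)$, resp.\ $g^{-1}(T_R)$, is a computable union of infinitely many classes whose restriction reduces to $T_2\leq_c S$, resp.\ $T_1\leq_c R$, again contradicting $S$-darkness of $R$, resp.\ $R$-darkness of $S$).

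With this structure in hand I would build the smaller upper bound. Since $f$ meets only finitely many $T_2$-classes and $T_2$ is infinite, choose $y\in T_S$ lying in a $T_2$-class disjoint from $\ran(f)$; symmetrically choose $x\in T_R$ lying in a $T_1$-class disjoint from $\ran(g)$ (note $[x]_T\neq[y]_T$). Set $U:=T_{\col(x,y)}$; then $U\leq_c T$, so $U\in\Delta^0_2$. Because $\ran(f)$ avoids $[y]_T$ and $\ran(g)$ avoids $[x]_T$, the only new identifications in $U$ (those within $[x]_T\cup[y]_T$) never affect images of $f$ or of $g$, so the very same $f$ and $g$ still witness $R\leq_c U$ and $S\leq_c U$; thus $U$ is a $\Delta^0_2$ upper bound of $\{R,S\}$. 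Finally $T\not\leq_c U$: if $\psi\colon T\leq_c U$, compose it with the canonical reduction $\rho\colon U\to T$ that sends $[y]_T$ to $x$ and fixes everything else (as in the proof of Lemma~\ref{lemma:finite-minimal}); its range misses the $T$-class $[y]_T$, so $\rho\circ\psi\colon T\leq_c T$ is a reduction whose range misses a class, contradicting self-fullness of $T$. Hence $U$ is an upper bound of $\{R,S\}$ with $T\not\leq_c U$, contradicting that $T$ is the \emph{least} upper bound in $\Delta^0_2$.

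The main obstacle is item (ii) and the finiteness bookkeeping around it: one must be careful that the "stray" classes sent to the wrong side are only finitely many — which is exactly what the transversal-extraction observation buys — so that $T_1,T_2$ are genuinely infinite and the avoiding choices of $x,y$ are available; everything else (darkness and self-fullness of $T$, the behaviour of $\rho$) is routine. Note also that $\Delta^0_2$-ness of $T$ is used only to know that $T$ must reduce to the specific $\Delta^0_2$ upper bounds $R\oplus S$ and $U$ we construct.
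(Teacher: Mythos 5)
Your preparatory work is sound and makes good use of mutual darkness: the decomposition of $T$ via $h\colon T\leq_c R\oplus S$ into the computable pieces $T_R,T_S$, the transversal-extraction observation, the conclusions that $T_1,T_2$ are infinite and that $f,g$ send all but finitely many classes to the ``right'' sides, and the verification that $U:=T_{\col(x,y)}$ (with $[y]_T\cap\ran(f)=\emptyset$, $[x]_T\cap\ran(g)=\emptyset$) is a $\Delta^0_2$ upper bound of $\{R,S\}$ are all correct. (The aside ``$U\leq_c T$'' is unjustified, but harmless, since $U\in\Delta^0_2$ follows directly from $T\in\Delta^0_2$.) The genuine gap is the final step, $T\not\leq_c U$. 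Your map $\rho$, which sends $[y]_T$ to $x$ and fixes everything else, is not computable: deciding membership in $[y]_T$ is only a $\Delta^0_2$ question, and nothing forces any class of $T$ to be computable. The analogous device in Lemma~\ref{lemma:finite-minimal} is legitimate there precisely because the collapsed class is assumed computable; here that hypothesis is absent, so $\rho\circ\psi$ is not a computable reduction and self-fullness of $T$ cannot be invoked. Nor is this a cosmetic slip: if $\psi\colon T\leq_c U$, the only problematic scenario is that a single $T$-class $D$ is mapped into the merged class $[x]_T\cup[y]_T$; when $\psi(D)$ stays inside one of $[x]_T$, $[y]_T$, your argument can be repaired ($\psi$ is then already a $T$-to-$T$ reduction missing a class), but when $\psi(D)$ is split between $[x]_T$ and $[y]_T$ no computable post-processing of $\psi$ yields such a reduction, and I see no contradiction extractable from darkness alone in that case. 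So defeating \emph{all} candidate reductions with one collapse fixed in advance is exactly the point that remains unproved.

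This is where your route diverges from the paper, and why the paper does something more dynamic: there $U$ is built by a $\mathbf{0}'$-effective construction on top of $R\oplus S$, merging infinitely many pairs of classes (always one even-side with one odd-side class, which is the same parity discipline that your choice of $x,y$ imitates to preserve $R,S\leq_c U$), and for \emph{each} candidate reduction $\phi_e\colon T\leq_c U$ it searches for witnesses $u,v$ with $\phi_e(u),\phi_e(v)$ of different parity in fresh classes and then collapses them if and only if $u\cancel{\rel{T}}v$ --- so the collapse is tailored to the specific $\phi_e$, and the ``split image'' obstruction never arises. Mutual darkness is consumed there in the termination argument (Lemma~\ref{lem:terminate}), playing the role your transversal-extraction observation plays, rather than in a one-shot structural analysis of $T$. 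To complete your proof you would either have to handle the split-image case (which seems to require new ideas, or further constraints on $x,y$ that you have not established), or fall back on a diagonalization of the paper's kind.
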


\begin{proof}
Towards a contradiction, assume that $T\in \Delta^0_2$ is the supremum
of $R,S$. We build $U \in \Delta^0_2$ such that $U$ is an upper bound of
$R,S$ and avoids the upper cone of $T$. That is to say, we build $U$
satisfying the following requirements

\begin{align*}
  &P_e:  \text{$\phi_e$ does not reduce $T$ to  $U$},\\
  &Q^R:  R \leq_c U,\\
  &Q^S:  S \leq_c U.
\end{align*}

\subsubsection*{Strategy for the requirements}

Call the \emph{$R$-part} (resp.\ \emph{$S$-part}) of $R\oplus S$ the one
consisting of all even (odd) numbers. Our idea is to construct $U\supseteq R
\oplus S$ by injectively merging equivalence classes of the $S$-part of $R
\oplus S$ with classes of $R$-part of $R \oplus S$ (and by these actions
meeting the above requirements). Eventually, each equivalence class of $U$
will consist of the union of exactly two equivalence classes of $R \oplus S$,
one belonging to its $R$-part and the other to its $S$-part.

The strategy for meeting a single $P_e$-requirement is straightforward. We
look for a pair of distinct numbers $u,v$ such that
$\phi_e(u)\downarrow=x_e,\phi_e(v)\downarrow=y_e $ and $x_e, y_e$ have
different parity. When found, we distinguish two cases:

\begin{enumerate}
\item  If $u\rel{T}v$, we do nothing and keep $[x_e]_U$ and $[y_e]_U$
    separate;
\item If $u \cancel{\rel{T}} v$, we merge $[x_e]_U$ and $[y_e]_U$ and keep
    them together.
\end{enumerate}

The strategy prevents $\phi_e$ from being a reduction of $T$ to $U$. Indeed,
if $\phi_e$ is total and fails to converge on elements with different parity,
then $\range(\phi_e)$ must be all contained in either the $R$-part  or the
$S$-part of $R\oplus S$. Without loss of generality assume $\range(\phi_e)$
is a subset of the $S$-part of $R\oplus S$. If so, one can easily construct
from $\phi_e$ a computable $f$ reducing $T$ to $S$. But since $R\leq_c T$, we
would have $R\leq_c S$, contradicting the fact that $R$ and $S$ are
incomparable.

To deal with $Q$-requirements, we let $U$ be initially equal to $R \oplus S$.
So at first stage of the construction we have that $R\leq_c U$ via $\lambda
x.2x$ and $S \leq_c U$ via $\lambda x.2x+1$. We claim that this is not injured
by the subsequent action of any $P$-requirement. To see why, consider for
instance $R$. $R$ is obviously reducible to $R \oplus S$ via $\lambda x.2x$.
Now let $U$ be obtained from $R \oplus S$ by merging a class of its $R$-part
(say, $[2k]_{R \oplus S}$) with a class of its $S$-part (say, $[2j+1]_{R
\oplus S}$). Then  $R\leq_c U$, since no equivalence classes in the range of
$\lambda x.2x$ have been collapsed. A similar reasoning can be made for $S$.
Hence if we allow $P_e$ to merge  only equivalence classes with different
parity, we obtain that the $Q$-requirements are automatically satisfied.

\subsubsection*{Interaction between strategies}
To combine all strategies, we have to address the following difficulty. When
$P_e$ wants to collapse a given pair of equivalence classes we need to be
careful that this does not imply collapsing also, by transitivity,
equivalence classes of the same parity, because if this happens we might
injure a $Q$-requirement. Suppose for instance that $P_i$ wants to collapse
$[2k]_U$ and $[2j+1]_U$ but $P_e$ already collapsed $[2k]_U$ and $[2i+1]_U$.
If we let $P_i$ be free to act, then by transitivity it would collapse two
equivalence classes of the same parity, i.e.,  $[2j+1]_U$ and $[2i+1]_U$. To
avoid this, we define the following priority ordering of the requirements
\[
P_0 < P_1 < \ldots P_e < \ldots
\]
Next, $P_e$  looks for numbers $u,v$ such that $\phi_e(u)$ and $\phi_e(v)$
have different parity and come from \emph{fresh} equivalence classes of
$R\oplus S$, i.e., neither $u$ nor $v$ belongs to equivalence classes already
collapsed in the construction. A similar condition seems $\Sigma^0_2$: $P_e$
asks whether there exist $u,v$ not being in the union of finitely many
$\Delta^0_2$ sets (i.e., the equivalence classes already collapsed).
Nonetheless, by making use of the fact that $R$ and $S$ are mutually dark we
will prove that $\mathbf{0}'$ can decide such condition (see Lemma
\ref{lem:terminate}).

\subsubsection*{The construction}

We build $U$ in stages, i.e., $U=\bigcup_{k\in \omega} U[k]$. During the
construction we keep track of the equivalence classes that we collapse by
putting a witness them in a set $Z$.

\subsubsection*{Stage $0$}
$U[0]=R\oplus S$ and $Z=\emptyset$.

\subsubsection*{Stage $s+1=2e$}
We deal with $P_e$. In doing so, we execute the following algorithm (which is
computable in $\mathbf{0}'$): List all pairs of distinct numbers $(u,v)$
until one of the following cases happens
\begin{enumerate}
\item $\phi_e(u)\downarrow=x_e,$ $\phi_e(v)\downarrow=y_e$, and
\begin{enumerate}
\item either $u \rel{T} v \not\Leftrightarrow x_e \rel{U} y_e$,
\item or $x_e$ and $y_e$ have different parity and $\set{x_e,y_e}\cap
    Z=\emptyset$;
\end{enumerate}
\item there is $\phi_e(x)\uparrow$.
\end{enumerate}

Lemma \ref{lem:terminate} proves that the algorithm always terminate. If it
terminates with outcome $(1.b)$, let $U[s+1]=U[s]_{\col(x_e,y_e)}$ and put
$[x_e]_{R \oplus S}$ and $[y_e]_{R\oplus S}$ in $Z$; if it terminates with
outcome $(1.a)$ or $(2)$, do nothing.

\subsubsection*{Stage $s+1=2e+1$}
We ensure that eventually any $U$-class will be the merging of an $R$-class
and a $S$-class. To do so, let $u_{e}$ (resp.\ $v_{e}$) be the least even (odd)
number not in $Z$. Let $U[s+1]=U[s]_{\col(u_e,v_e)}$ and put $[u_e]_{R \oplus
S}$ and $[v_e]_{R\oplus S}$ in $Z$.

\subsubsection*{The verification}

The verification is based on the following lemma.

\begin{lemma}\label{lem:terminate}
For all $P_e$, the algorithm defined at stage $2e$ terminates.
\end{lemma}

\begin{proof}
Assume that there is $P_e$ for which the algorithm does not terminate. This
means that $\phi_e$ is total (otherwise, the algorithm at some point would
outcome $(2)$) and  $\phi_e$ reduces $T$ to $U$ (otherwise, at some point the
algorithm would outcome $(1.a)$). Moreover, $\phi_e$ can not hit infinitely
many equivalence classes of both the $R$-part and the $S$-part of $R \oplus
S$.  Otherwise, the algorithm would eventually find a pair of fresh
equivalence classes with different parity. Without loss of generality, assume
that $\phi_e$ hits only finitely many classes in the $R$-part of $R \oplus S$
and let $A=\set{a_0,\ldots,a_n}$ be a transversal of such classes and let
$B=\set{b_0,\ldots,b_n}$ be a set of odd numbers such that, for all $0\leq
i\leq n$, $a_i \rel{U} b_i$. The existence of such $B$ is guaranteed by the
fact that each equivalence class of the $R$-part of $R\oplus S$ is merged in
$U$ with an equivalence class of the $S$-part of $R\oplus S$. But then one can
define the following $\deg(R)$-computable reduction from $T$ to $U$ that hits
only the $S$-part of $U$,
\[
f(x)= \begin{cases}
\phi_e(x) &\text{$\phi_e(x)$ is odd,}\\
b_z, \mbox{where $z$ is such that $xRa_z$}  &\text{otherwise.}
\end{cases}
\]

It follows that $R \leq_{\deg(R)}S$, contradicting the fact that $R|_d S$.
\end{proof}

We are in the position now to show that all $P$-requirements are satisfied.
The last lemma guarantees that, given $P_e$, the corresponding strategy
terminates with either disproving that $\phi_e$ is a reduction from $T$ to
$U$ or by providing two equivalence classes that can be collapsed in $U$ to
diagonalize against $	\phi_e$. The $Q$-requirements are also satisfied
because we carefully avoid, within the construction, to collapse classes of
the same parity.
\end{proof}

By modifying the last proof, we can obtain something stronger.

\begin{thm}
If $R,S \in \Delta^{0}_2$ and $R$ is $S$-dark or $S$ is $R$-dark, then $R,S$
have no $\sup$ in $\Delta^0_2$.
\end{thm}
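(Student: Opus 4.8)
The plan is to locate the single point in the proof of Theorem~\ref{theo:mut-dark-no-sup} where full mutual darkness was used and check that one-sided darkness, together with a small amount of extra bookkeeping, still suffices. Without loss of generality I will assume that $R$ is $S$-dark. First note that $R$ is then dark (by the argument proving Proposition~\ref{prop:mutual-implies-dark}), and moreover $R\not\le_c S$: a computable reduction $R\le_c S$ would give $R\le_T S$, and then greedily building a transversal relative to the oracle $S$ would yield an infinite $S$-computable --- hence $S$-c.e.\ --- transversal of $R$, against $R$ being $S$-dark. If in addition $S$ is $R$-dark, then $R,S$ are mutually dark and the conclusion is Theorem~\ref{theo:mut-dark-no-sup}; so I may assume $S$ is not $R$-dark, and then by the proof of Proposition~\ref{lem:mutually-impies-non-d} there is a $\deg_T(R)$-computable reduction $\rho\colon R\le_{\deg_T(R)}S$. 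This $\rho$ will be the only new ingredient.

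Next, towards a contradiction, suppose $T\in\Delta^0_2$ is the supremum of $R$ and $S$, and run verbatim the construction of Theorem~\ref{theo:mut-dark-no-sup}: build a $\Delta^0_2$ upper bound $U\supseteq R\oplus S$ of $R,S$ in which only classes of different parity are ever collapsed, so that $R\le_c U$ via $x\mapsto 2x$, $S\le_c U$ via $x\mapsto 2x+1$, the even part of $U$ stays a copy of $R$, the odd part a copy of $S$, and eventually each $U$-class is the union of one $R$-class with one $S$-class; the same $P_e$-requirements are met by the same $\mathbf{0}'$-computable search. The only clause to re-examine is the termination lemma (Lemma~\ref{lem:terminate}): if the search for some $P_e$ fails to halt, then $\phi_e$ is total, reduces $T$ to $U$, and --- since $R\le_c T$ forces $T$ to have infinitely many classes --- hits only finitely many classes of exactly one of the two parts. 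When $\phi_e$ hits only finitely many $S$-part classes, the original computation produces a $\deg_T(S)$-computable reduction of $T$ to $U$ whose range lies among the even numbers, so $T\le_{\deg_T(S)}R$ and hence (using $S\le_c T$) $S\le_{\deg_T(S)}R$, which contradicts $R$ being $S$-dark.

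The remaining subcase --- $\phi_e$ hitting only finitely many $R$-part classes --- is the step I expect to be the main obstacle, and is where $\rho$ has to be spent. Here the set $F=\{x:\phi_e(x)\text{ is even}\}$ is computable and equals a union of finitely many $T$-classes, and $x\mapsto(\phi_e(x)-1)/2$ reduces $T\upharpoonright\overline{F}$ to $S$; symmetrically to the previous paragraph one gets $R\le_{\deg_T(R)}S$, but that is consistent with $R$ being $S$-dark, so it is not yet a contradiction. What I would try is to upgrade this to a \emph{plain} computable reduction $R\le_c S$ and so contradict $R\not\le_c S$: fix a computable $h\colon R\le_c T$, note that $h^{-1}(F)$ is a union of finitely many $R$-classes, and reroute those (using $\rho$ to recognise which of them one is in) into $\overline{F}$, where the composition with $x\mapsto(\phi_e(x)-1)/2$ already works. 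The delicate point is that this recognition seems to cost the oracle $R$, so carrying the rerouting out inside the \emph{plain} reductions is exactly what must be gotten right; should that fail, the fallback is to modify the construction itself, giving the $R$-side of $U$ enough redundancy (a sequence of copies of $R$, only one of which needs to be kept pristine) so that every diagonalising collapse against $\phi_e$ can be carried out in a currently ``free'' copy, which removes the obstruction at the cost of more elaborate bookkeeping. Everything outside this subcase is a line-by-line copy of the proof of Theorem~\ref{theo:mut-dark-no-sup}.
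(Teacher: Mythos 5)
Your reduction to the mutually dark theorem and your Case handling where $\phi_e$ hits only finitely many classes of the $S$-part are fine, and indeed match the paper's strategy (the paper, with the roles of $R$ and $S$ interchanged, gets $R\leq_{\deg_T(R)}S$ there and contradicts $S$ being $R$-dark). But the subcase you yourself flag as the main obstacle --- $\phi_e$ hitting only finitely many classes of the $R$-part --- is a genuine gap, and your sketched repair does not close it. The intermediate claim that $F=\{x:\phi_e(x)\text{ is even}\}$ is a union of finitely many $T$-classes is false in general: $U$ is obtained precisely by merging classes of \emph{different} parity, so $T$-equivalent inputs can have $\phi_e$-images of different parity. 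More importantly, upgrading to a plain reduction $R\leq_c S$ requires a total computable function to decide which of the finitely many ``even-landing'' $R$-classes an input lies in; this is exactly the information that costs the oracle $R$, and the $\deg_T(R)$-computable map $\rho$ cannot supply it inside a computable reduction. The fallback (``redundant copies of $R$'' in the construction) is only a hope, not an argument. So as written the proof is incomplete precisely at the step where one-sided darkness differs from mutual darkness.

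The paper closes this case without any plain reduction and without $\rho$: fix a computable $h\colon R\leq_c T$ and consider the partial computable function $f(x)=(\phi_e(h(x))-1)/2$, defined only when $\phi_e(h(x))$ is odd. Since $\phi_e\circ h$ reduces $R$ to $U$, distinct $R$-classes go to distinct $U$-classes, and since only finitely many $R$-part classes are hit while $R$ has infinitely many classes, $\range(f)$ meets infinitely many $S$-classes. Using the oracle $S$, greedily extract an infinite $S$-c.e.\ set $A\subseteq\dom(f)$ whose $f$-values are pairwise $S$-inequivalent; for distinct $x,y\in A$ this forces $\phi_e(h(x))\cancel{\rel{U}}\phi_e(h(y))$ and hence $x\cancel{\rel{R}}y$, so $A$ is an infinite $S$-c.e.\ transversal of $R$, contradicting $R$ being $S$-dark. (This is the paper's Case~2 of the re-proved Lemma~\ref{lem:terminate}, stated there with $R$ and $S$ swapped relative to your choice of which relation is dark in which.) Note that this argument also makes your preliminary case split and the auxiliary reduction $\rho$ unnecessary: both cases of the termination lemma contradict the single one-sided darkness hypothesis directly.
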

\begin{proof}[Proof Sketch]
Suppose that $S$ is $R$-dark. Then the proof of
Proposition~\ref{lem:mutually-impies-non-d} shows that $R\nleq_{\deg_T(R)}
S$. 	

Assume that $T$ is a $\Delta^0_2$ equivalence relation, and $T = \sup
\{R,S\}$. We construct a $\Delta^0_2$ equivalence relation $U$ by employing
precisely the same construction as in Theorem~\ref{theo:mut-dark-no-sup}. In
order to verify the construction, it is sufficient to re-prove
Lemma~\ref{lem:terminate} as follows. 	

Suppose that for some $e\in\omega$, the algorithm for $P_e$ does not
terminate. Then, arguing as above, we may assume that:
\begin{enumerate}

\item the function $\varphi_e$ is total,
		
\item $\varphi_e\colon T\leq_cU$, and
		
\item $\varphi_e$ does not hit infinitely many classes of either the $R$-part
    or the $S$-part of the relation $R\oplus S$.
\end{enumerate}
Thus, one of the following two cases holds.
	
\emph{Case 1.}
The function $\varphi_e$ hits only finitely many classes in the $R$-part and
infinitely many classes in the $S$-part. Then the same argument as in the
proof of Lemma~\ref{lem:terminate} shows that $R\leq_{\deg_T(R)} S$, which
contradicts the $R$-darkness of $S$.
	
\emph{Case 2.}
Assume that $\varphi_e$ hits infinitely many classes in the $R$-part and only
finitely many classes in the $S$-part. Then choose a computable function
$h\colon S\leq_cT$, and consider a partial computable function
\[
f(x) :=  \left\{
			\begin{array}{ll}
				\varphi_e(h(x))/2, & \text{if~} \varphi_e(h(x)) \text{~is even},\\
				\uparrow, & \text{otherwise}.
			\end{array}
		\right.
\]
Since the function $\varphi_e\circ h$ reduces $S$ to $U$, the c.e.\ set
$\range(f)$ intersects infinitely many $R$-classes. Therefore, one can choose
an infinite $R$-c.e. set $A$ with the following properties: $A\subseteq
\dom(f)$ and $f(x)\cancel{\rel{R}} f(y)$ for distinct $x,y\in A$. Note that
the condition $f(x)\cancel{\rel{R}} f(y)$ implies that
$\varphi_e(h(x))\cancel{U}\varphi_e(h(y))$, and this, in turn, implies
$x\cancel{S}y$. Hence, $A$ is an $R$-c.e. transversal of $S$, which
contradicts the $R$-darkness of $S$. 	

Therefore, one can re-prove Lemma~\ref{lem:terminate} and verify the
construction.
\end{proof}

The above result contrasts with the fact that $\Id$ and a dark ceer have
always $\sup$ in the ceers: see \cite[Observation~5.1]{andrewssorbi2}.

\subsection{Nonexistence of suprema at the same level of Ershov hierarchy}
We turn now to the problem of whether there are equivalence relations $R,S
\in \Sigma^{-1}_{a}$ with no supremum in $\Sigma^{-1}_{a}$. We know from
\cite{andrewssorbi2} that this is the case for ceers: in particular, there
are light ceers with no sup (in the class of ceers). The next proposition
extends this fact to all levels of the Ershov hierarchy, with the exception
of co-ceers.

\begin{prop}\label{lemma:nosup}
Suppose that $\mathcal{X}\in \{ \Sigma^{-1}_a, \Pi^{-1}_a \,\colon
|a|_{\mathcal{O}} \geq 2\}$. There are light equivalence relations $R$ and
$S$ such that both $R$ and $S$  properly belong to $\mathcal{X}$, and $R,S$
have no $\sup$ in the $\mathcal{X}$-equivalence relations.
\end{prop}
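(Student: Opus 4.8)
The plan is to reduce Proposition~\ref{lemma:nosup} to the known result for ceers by ``lifting'' a pair of light ceers without supremum into a properly $\mathcal{X}$-level via a construction that adds just enough extra mind-changes to climb to level $\mathcal{X}$ while preserving the structural obstruction to having a sup. Concretely, fix light ceers $R_0, S_0$ (with infinite $c.e.$ transversals) which have no $\sup$ in the ceers, as provided by \cite{andrewssorbi2}. Also fix a properly $\mathcal{X}$ set $X$ (so $X \in \Sigma^{-1}_a \smallsetminus \Pi^{-1}_a$, or dually, depending on which horn of $\mathcal{X}$ we are in; recall $|a|_\mathcal{O}\ge 2$ guarantees such an $X$ exists). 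First I would form $R := R_0 \oplus F_X$ and $S := S_0 \oplus F_X$, where $F_X$ is the two-class relation from Lemma~\ref{lemma:F_X}. Since $F_X \in \Pi^{-1}_{2n}$ for a suitable $n$ (and, by adjusting $X$, can be made to lie properly at the relevant level), $R$ and $S$ are light (they inherit a $c.e.$ transversal from the $R_0$-, $S_0$-parts together with two more points from $F_X$), and both lie properly in $\mathcal{X}$: they are in $\Sigma^{-1}_a$ because a finite direct sum of sets in the class stays in the class, and they are not in $\Pi^{-1}_a$ because $F_X$ already forces a proper two-class piece (one shows $R \in \Pi^{-1}_a$ would give $F_X \in \Pi^{-1}_a$ by restricting to the relevant coordinate, contradicting the choice of $X$).

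The heart of the argument is to show $R, S$ have no $\sup$ in $\mathcal{X}$. Suppose toward a contradiction that $T \in \mathcal{X}$ is such a supremum. The key observation is that $R \oplus S \equiv_c (R_0 \oplus S_0) \oplus F_X$ up to the obvious rearrangement, and more importantly that any upper bound $U$ of $\{R, S\}$ in $\mathcal{X}$ restricts to an upper bound of $\{R_0, S_0\}$: from $R \leq_c U$ via some $f$ and $S \leq_c U$ via some $g$, we recover reductions of $R_0, S_0$ into $U$ by precomposing with the inclusion of the first summand. The step I would carry out next is to argue that a supremum $T$ for $R,S$ in $\mathcal{X}$ would yield a supremum for $R_0, S_0$ in the ceers. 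For this one uses the fact that $R_0, S_0$ are ceers while $T$, being a sup of two light relations, must itself be light and, by a minimality/projection argument analogous to the ones in \S4, its ``ceer part'' $T' \leq_c T$ obtained by deleting the finitely many classes contributed by the $F_X$-components is a ceer that bounds $R_0$ and $S_0$; conversely any ceer bounding $R_0, S_0$ bounds $R, S$ after $\oplus F_X$ (using $F_X \leq_c F_X$ trivially and the fact that we can always throw in the two extra classes), hence factors through $T$, hence through $T'$. This exhibits $T'$ as a $\sup$ of $R_0, S_0$ among ceers, contradicting \cite{andrewssorbi2}.

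The main obstacle I anticipate is making the ``deleting the $F_X$-part'' step precise: $F_X$ contributes exactly two classes to $R$ and to $S$, but under an arbitrary reduction into $U$ these classes can be merged with $R_0$- or $S_0$-classes in $U$, so one cannot literally excise them. The fix is the same case analysis used in the proof of Theorem~\ref{thm:dark-with-inf}(1): for any lower/upper bound one examines how the (at most two) distinguished classes sit relative to the reduction, splitting into the cases where they are hit by computable preimages versus noncomputable ones, and in each case one extracts the desired ceer-level reduction. The point is that $F_X$ has only \emph{finitely many} classes, so only finitely many classes of $R$ (resp.\ $S$) are ``contaminated,'' and finite perturbations do not affect $c$-degrees of infinite relations in the relevant way; this is exactly the kind of bookkeeping that makes the argument go through. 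The dual case $\mathcal{X} = \Pi^{-1}_a$ is handled symmetrically, replacing $F_X$ by $F_{\overline{X}}$ and the $\Sigma$-approximations by $\Pi$-approximations, exactly as the proofs of Theorems~\ref{thm:properly-dark} and~\ref{thm:mutually-dark} reduce the $\Pi$ case to the $\Sigma$ case; indeed one can alternatively just run a direct priority construction mirroring Theorem~\ref{thm:mutually-dark}, building $R, S$ together with requirements asserting that no candidate $T$ of the right level is their supremum, but the reduction-to-ceers route is cleaner if the finite-perturbation lemmas can be quoted.
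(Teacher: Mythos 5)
There are two genuine gaps. First, the gadget you use to lift the pair to level $\mathcal{X}$ does not land at level $\mathcal{X}$. If $X$ is properly $\Sigma^{-1}_a$, then $F_X$ is \emph{not} in $\Sigma^{-1}_a$ in general: deciding whether a pair $(x,y)$ is in $F_X$ requires tracking the membership of \emph{two} points in $X$ (and the relation holds positively also when both points are \emph{outside} $X$), which is exactly why Lemma~\ref{lemma:F_X} only places $F_X$ in $\Pi^{-1}_{2n}$ rather than in $\Sigma^{-1}_n$. So $R_0\oplus F_X$ need not belong to $\Sigma^{-1}_a$ at all, and your closure-under-$\oplus$ remark does not apply; the phrase ``by adjusting $X$'' is precisely the missing construction. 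The paper sidesteps this by coding a properly $\mathcal{X}$ set $A$ \emph{one bit per pair}, via $Q:=\Id\cup\{(2y,2y+1),(2y+1,2y): y\in A\}$, so that the characteristic function of $Q$ on the pair $\langle 2y,2y+1\rangle$ is literally $A(y)$; this keeps the relation exactly at level $\mathcal{X}$ (and not in the dual class, since $A$ is $m$-reducible to $Q$ viewed as a set of pairs), while remaining light.

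Second, the reduction of ``no $\sup$ in $\mathcal{X}$'' to the Andrews--Sorbi ``no $\sup$ in ceers'' result is not established and is unlikely to work as sketched. A hypothetical supremum $T\in\mathcal{X}$ of $R,S$ is just an abstract $\mathcal{X}$-relation: it carries no canonical ``$F_X$-component,'' and deleting finitely many classes from a properly $\Sigma^{-1}_a$ relation does not produce a ceer, so the object $T'$ you need is undefined. Moreover, even if you had a ceer $T'$ bounding $R_0,S_0$, to contradict \cite{andrewssorbi2} you must show $T'\leq_c W$ for \emph{every} ceer upper bound $W$ of $R_0,S_0$; your route only gives $T\leq_c W\oplus F_X$ (and even this presupposes $W\oplus F_X\in\mathcal{X}$, which fails by the first gap), and the passage from $T\leq_c W\oplus F_X$ to $T'\leq_c W$ is exactly the unproved crux. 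Note also that the ambient class of upper bounds changes when one moves from ceers to $\mathcal{X}$, so nonexistence of a supremum does not transfer formally. The paper's proof avoids all of this with a direct, concrete argument: take $\leq_m$-incomparable c.e.\ sets $U,V$, set $R:=R_U\oplus Q$, $S:=R_V\oplus Q$, and play a putative supremum $T$ against two explicit upper bounds, $E:=(R_U\oplus R_V)_{\col(0,1)}\oplus Q$ and $F:=(R_U\oplus R_V)\oplus Q$. From $T\leq_c E$ one gets that $T$ is essentially $1$-dimensional with unique noncomputable class $[a]_T$ satisfying $U\leq_m[a]_T$ and $V\leq_m[a]_T$, while $T\leq_c F$ forces $[a]_T\leq_m U$ or $[a]_T\leq_m V$, contradicting incomparability. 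If you want to salvage your write-up, you should replace $F_X$ by the $Q$-coding and replace the reduction-to-ceers step by an argument of this ``essential dimension plus $m$-incomparability'' type; your closing suggestion of a direct priority construction against ``$T$ is a supremum'' is also problematic as stated, since supremum-hood is a global property one cannot straightforwardly diagonalize against.
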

\begin{proof}
Fix a set $A$ properly belonging to the class $\mathcal{X}$. Consider two
c.e. sets $U$ and $V$ such that $U$ and $V$ are $\leq_m$-incomparable. We
define the relations $Q$, $R$, and $S$ as follows:
\begin{gather*}
	Q := \Id \cup \{ (2y,2y+1), (2y+1,2y) \,\colon y\in A\},\\
	R := R_U \oplus Q, \quad S = R_V \oplus Q.
\end{gather*}
It is easy to show that each of the relations $Q,R,S$ is light and properly
belongs to $\mathcal{X}$.

Assume that $T$ is the supremum of $\{ R, S\}$. Without loss of generality,
suppose that $0\in U\cap V$. Since the relation
\[
E :=  (R_U\oplus R_V)_{\col(0,1)} \oplus Q
\]
is an upper bound for $R$ and $S$, we have $T\leq_c E$, and $T$ must be
essentially 1-dimensional. Let $[a]_T$ be the unique non-computable
$T$-class. The conditions $R\leq_c T$ and $S\leq_c T$ imply that
\begin{equation}\label{equ:lem-5.3}
	U\leq_m [a]_T \text{~and~} V \leq_m [a]_T.
\end{equation}

Consider the essentially 2-dimensional relation $F := (R_U \oplus R_V)\oplus
Q$. Since $T$ should be reducible to $F$, we have either $[a]_T\leq_m U$ or
$[a]_T\leq_m V$. This fact and (\ref{equ:lem-5.3}) together contradict the
choice of $U$ and $V$. Thus, $R$ and $S$ have no supremum.
\end{proof}

In the previous proof making $R$ and $S$  properly lying in a given class
$\mathcal{X}$  has as a consequence that $R$ and $S$ are light. Nonetheless,
we can build dark (and even mutually dark) equivalence relations with no
supremum at the same level of the Ershov hierarchy. The next theorem proves
more: we combine what we know of the nonexistence of infima and suprema to
build a pair of equivalence relations that fails to have either.

\begin{thm}\label{thm:omegainf}
There are mutually dark $\omega$-c.e.\ equivalence relations with neither
$\inf$ nor $\sup$ in the degrees of $\omega$-c.e. equivalence relations.
\end{thm}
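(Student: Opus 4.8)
The plan is to combine the construction of Theorem~\ref{thm:mutually-dark} (building mutually dark equivalence relations $U,V$ with finite classes, properly $\omega$-c.e.) with the supremum-killing strategy of Theorem~\ref{theo:mut-dark-no-sup}. The key observation is that once we have produced $\omega$-c.e.\ relations $U,V$ that are mutually dark, the non-existence of an infimum among $\omega$-c.e.\ relations follows \emph{for free} from Lemma~\ref{lem:no-inf}: indeed $U|_d V$ by Lemma~\ref{lem:mutually-impies-non-d}, so $U,V$ have no infimum even in the wider class of $\Sigma^{-1}_a$-relations (here we are in the $\omega$-c.e.\ case, which is $\Sigma^{-1}_{\omega}$ for a suitable notation $a$ of $\omega$; the argument of Lemma~\ref{lem:no-inf} is insensitive to the level). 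So the real content is to additionally guarantee that $U$ and $V$ have no \emph{supremum} in the $\omega$-c.e.\ degrees.

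For the supremum part, suppose towards a contradiction that $T$ is an $\omega$-c.e.\ equivalence relation with $T=\sup\{U,V\}$. I would then run the construction of Theorem~\ref{theo:mut-dark-no-sup}, building a third equivalence relation $W \supseteq U \oplus V$ by injectively merging $V$-classes with $U$-classes, so as to satisfy $U\leq_c W$, $V\leq_c W$, and $\phi_e$ does not reduce $T$ to $W$, for every $e$. The crucial point that made that construction work was Lemma~\ref{lem:terminate}, whose proof used exactly that $U|_d V$ (i.e.\ mutual darkness): if $\phi_e$ totally and correctly reduces $T$ to $W$ but hits only finitely many classes on one side, one extracts a $U$-c.e.\ transversal of $V$ (or vice versa), contradicting mutual darkness. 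Since our $U,V$ from the (adapted) Theorem~\ref{thm:mutually-dark} construction are mutually dark, the same argument goes through verbatim. The one genuinely new thing to check is that the resulting $W$ can be kept $\omega$-c.e.: here I would note that the only changes made to $c_W$ beyond the initial value $c_{U\oplus V}$ are \emph{collapses} of pairs, each collapse being performed at most once per pair $\langle x,y\rangle$ and never undone, so each bit of the characteristic function of $W$ changes at most finitely often — in fact the number of mind-changes on $\langle x,y\rangle$ is bounded by the (fixed) number of mind-changes of $U\oplus V$ on that pair plus one, hence $W$ is $\omega$-c.e.\ (indeed, since $U,V$ are finite-class relations with essentially singleton classes, $W$ even has a clean $\omega$-c.e.\ approximation). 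Thus $W$ is an $\omega$-c.e.\ upper bound of $\{U,V\}$ not above $T$, contradicting $T=\sup\{U,V\}$.

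Putting the pieces together: by the $\mathcal{X}=\Sigma^{-1}_\omega$ instance of Theorem~\ref{thm:mutually-dark}, fix mutually dark $\omega$-c.e.\ equivalence relations $U,V$ (with finite classes, properly $\omega$-c.e.); by Lemma~\ref{lem:mutually-impies-non-d} we have $U|_d V$; by Lemma~\ref{lem:no-inf}, $U$ and $V$ have no infimum in the $\omega$-c.e.\ degrees; and by the supremum-killing construction above, they have no supremum in the $\omega$-c.e.\ degrees either. I expect the main obstacle to be purely bookkeeping: verifying that the supremum construction of Theorem~\ref{theo:mut-dark-no-sup}, which was stated for $\Delta^0_2$ relations and used a $\mathbf{0}'$-oracle to find fresh pairs of classes, produces an $\omega$-c.e.\ (not merely $\Delta^0_2$) relation $W$. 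The resolution is that the $\mathbf{0}'$-oracle is used only to \emph{decide where to collapse}, not to build $c_W$ with extra mind-changes; the approximation $W[s]$ still has the mind-change bound inherited from $U\oplus V$ (finite, since $U,V$ are $\omega$-c.e.\ with uniformly bounded — in fact, finite — numbers of changes on each pair of classes), so $W$ is genuinely $\omega$-c.e. A secondary, minor point is to confirm that all of this is compatible with $U,V$ lying \emph{properly} at the $\omega$-c.e.\ level, which is already part of the conclusion of Theorem~\ref{thm:mutually-dark}.
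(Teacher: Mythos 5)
There is a genuine gap in the supremum half of your argument: the claim that the upper bound $W$ produced by the construction of Theorem~\ref{theo:mut-dark-no-sup} ``can be kept $\omega$-c.e.'' To certify $W\in\Sigma^{-1}_a$ (here $a$ a notation for $\omega$) you must exhibit a \emph{computable} approximation to $c_W$ together with a \emph{computable} bound on its mind-changes. The construction in Theorem~\ref{theo:mut-dark-no-sup} is a $\mathbf{0}'$-construction: at stage $2e$ it consults $\mathbf{0}'$ to decide which case of the algorithm applies (totality of $\phi_e$, $T$- and $U$-facts, freshness of classes, i.e.\ membership in $Z$), and at stage $2e+1$ it again consults $\mathbf{0}'$ to find the least even/odd numbers outside $Z$. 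Your bound ``mind-changes of $U\oplus V$ plus one'' counts only the changes made by the \emph{true} (limit) construction on each pair; it does not bound the changes of a computable approximation, which must also approximate all these $\mathbf{0}'$-decisions. Each time the guess about which pair of classes gets merged at some stage flips, the approximation to $c_W$ flips on all the affected pairs, and there is no computable bound on how often this happens. So the $W$ you get is $\Delta^0_2$ but not, on the face of it, $\omega$-c.e., and the contradiction with $T=\sup\{U,V\}$ \emph{in the $\omega$-c.e.\ degrees} does not follow. (Note that if your argument went through, it would show that mutual darkness excludes suprema at every level $\Sigma^{-1}_a$ within that level, a much stronger statement than the paper proves.)

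The paper's actual route for the supremum half is different and avoids this issue entirely: it constructs (by a new, finite-injury, genuinely computable construction with an explicit mind-change bound) mutually dark $\omega$-c.e.\ relations $U,V$ that are in addition essentially $1$-dimensional, with unique noncomputable classes $X_U$, $X_V$ satisfying $X_U|_m X_V$. Then no $\omega$-c.e.\ supremum $T$ can exist by a static $m$-degree argument as in Proposition~\ref{lemma:nosup}: since $(U\oplus V)_{\col(0,1)}$ is an $\omega$-c.e.\ upper bound, $T$ would be essentially $1$-dimensional with noncomputable class $X_T$ satisfying $X_U,X_V\leq_m X_T$, while $T\leq_c U\oplus V$ forces $X_T\leq_m X_U$ or $X_T\leq_m X_V$, contradicting $X_U|_m X_V$. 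The infimum half of your proposal (Theorem~\ref{thm:mutually-dark} plus Lemma~\ref{lem:mutually-impies-non-d} plus the argument of Lemma~\ref{lem:no-inf} run inside the $\omega$-c.e.\ relations) is correct and is exactly what the paper does; but to repair the supremum half you would either have to redo the cone-avoiding construction as a priority argument with computable approximations and a verified mind-change bound, or adopt the paper's essentially-$1$-dimensional strategy.
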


\begin{proof}
If $U, V$ are $\omega$-c.e. equivalence relations which are mutually dark
then a simplified version  of the argument used in the proof of
Lemma~\ref{lem:no-inf} shows that they have no $\inf$ in the degrees of
$\omega$-c.e. equivalence relations.

If in addition $U,V$ are essentially $1$-dimensional, with one non-computable
class $X_{U}$ for $U$ and $X_{V}$ for $V$, such that $X_{U}|_{m} X_{V}$ then
$U,V$ have no $\sup$ either in the $\omega$-c.e.\ equivalence relations. The
argument for showing this is essentially as in the proof of
Proposition~\ref{lemma:nosup}. Indeed, assume $0 \in X_U \cap X_V$ and
suppose by contradiction that $T$ is such a $\sup$; now, consider the
$\omega$-c.e.\ essentially $1$-dimensional equivalence relation $(U\oplus
V)_{\col(0,1)}$, which is an upper bound of both $U,V$, thus $T
\leq_c(U\oplus V)_{\col(0,1)}$. But then $T$ is essentially $1$-dimensional
too, and thus has exactly one noncomputable equivalence class, say $X_{T}$.
Since $U,V\leq_cT$ it must be $X_{U} \leq_{m} X_{T}$ and $X_{V} \leq_{m}
X_{T}$. On the other hand, as $U\oplus V$ is an upper bound of both $U,V$, we
have that $T \leq_cU\oplus V$, and thus $X_{T} \leq_{m} X_{U}$ or $X_{T}
\leq_{m} X_{V}$, giving $X_{V} \leq_{m} X_{U}$ or $X_{U} \leq_{m} X_{V}$,
contradiction.
\end{proof}

It remains to show that $\omega$-c.e. equivalence relations $U,V$ as the ones
used in the proof of Theorem~\ref{thm:omegainf} exist: this is the goal of the
following lemma.

\begin{lemma}
There exist essentially $1$-dimensional $\omega$-c.e. equivalence relations
$U,V$ which are mutually dark and such that all equivalence classes of $U$
and $V$ are finite with the exception of exactly one $U$-equivalence class
$X_U$ and one $V$-equivalence class $X_V$ for which we have that $X_U |_m
X_V$.
\end{lemma}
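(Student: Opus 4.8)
The plan is to extend the construction of Theorem~\ref{thm:mutually-dark} by growing one class of each relation into a noncomputable, $m$-incomparable class. Fix a notation $a$ of $\omega$ and build $U,V$ together with $\omega$-c.e.\ approximating pairs, exactly in the style of Theorem~\ref{thm:mutually-dark}. Reserve computable coinfinite sets $Z_U\ni 0$ and $Z_V\ni 0$ with $\overline{Z_U},\overline{Z_V}$ infinite; the exceptional classes will be $X_U:=[0]_U$ and $X_V:=[0]_V$, and the construction will keep $X_U\subseteq Z_U$, $X_V\subseteq Z_V$, and will only ever \emph{add} elements to $X_U$ and $X_V$, so that $X_U,X_V$ are c.e. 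The requirements, for all $e\in\omega$, are
\begin{align*}
&F^U_e\ (e\notin Z_U):\ [e]_U\text{ is finite},\qquad F^V_e\ (e\notin Z_V):\ [e]_V\text{ is finite},\\
&N^U_e:\ \phi_e\text{ total}\Rightarrow X_U\ne\phi_e^{-1}(X_V),\qquad N^V_e:\ \phi_e\text{ total}\Rightarrow X_V\ne\phi_e^{-1}(X_U),\\
&P^U_e:\ W_e^{V}\text{ is not an infinite transversal of }U,\\
&P^V_e:\ W_e^{U}\text{ is not an infinite transversal of }V,
\end{align*}
together with a supply of background elements of $Z_U$ (resp.\ $Z_V$) permanently enumerated into $X_U$ (resp.\ $X_V$) to keep it infinite. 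Meeting these yields the lemma: the $F$-requirements keep every class but $X_U$ finite, so $U$ is essentially $1$-dimensional; the $N$-requirements give $X_U\nleq_m X_V$ and $X_V\nleq_m X_U$ (hence $X_U,X_V$ are noncomputable and $X_U|_m X_V$); and the $P$-requirements give mutual darkness.

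The $N^U_e$-strategy appoints a large fresh follower $x\in Z_U$ and waits for $\phi_e(x)\!\downarrow\,=y$; then (respecting higher-priority commitments) it arranges $x\in X_U\iff y\notin X_V$: if $y\notin Z_V$ or $y$ has been committed out of $X_V$, it enumerates $x$ into $X_U$; if $y\in X_V$ already it does nothing; otherwise it enumerates $y$ into $X_V$ and keeps $x$ out of $X_U$. This is a single action (bar injury), so $X_U,X_V$ stay c.e. The $P^U_e$-strategy is the one of Theorem~\ref{thm:mutually-dark}, run on finite oracle strings $\tau\subset c_{V[s]}$: it searches for distinct $u,v\in W_e^{\tau}$ that are outside $X_U$ and larger than every follower and every number used by higher-priority requirements, then merges $[u]_U$ and $[v]_U$, commits $u,v$ out of $X_U$, and becomes inactive (distinctness of $u,v$ in a transversal already forces $[u]_U\ne[v]_U$, so the merge is genuine). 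All requirements are interleaved in a priority list of order type $\omega$; the construction is run computably, with the mind-change bookkeeping of Theorem~\ref{thm:mutually-dark}, so $U$ and $V$ come out $\omega$-c.e.\ — the characteristic function of each relation changes on a given pair only boundedly often, with a bound computable in that pair, since only the finitely many higher-priority requirements that may touch it ever act on it.

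The verification is a routine finite-injury argument. The $F$-requirements and the background enumeration make $U$ essentially $1$-dimensional with the stated finite classes, and symmetrically for $V$; the $N$-requirements are met as described, which in particular forces $X_U$ and $X_V$ to be noncomputable (a computable $X_U$ would $m$-reduce to $X_V$, defeating some $N^U_e$), hence $X_U|_m X_V$. For a $P^U_e$-requirement: were $W_e^{V}$ an infinite transversal of $U$, then, meeting infinitely many $U$-classes, at most one of which is $X_U$, and containing at most finitely many followers of higher-priority $N$-requirements, it would contain two elements $u,v$ that $P^U_e$ may legally merge, contradicting transversality. Merging never injures an $F$-requirement: exactly as in Theorem~\ref{thm:mutually-dark}, each $P$-strategy acts only finitely often and always merges classes disjoint from $X_U$ and from every object protected by a higher-priority requirement, and the elements it consumes are committed out of $X_U$, so no $N$-strategy later drags them back in.

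The step I expect to be the main obstacle is exactly this last interaction — the bookkeeping on the track $Z_U$ (dually $Z_V$). The darkness strategies need the freedom to merge within $Z_U$, since a purported infinite $V$-c.e.\ transversal of $U$ may well hide among the unused track elements, while the $N$-strategies keep enumerating chosen track elements into $X_U$ and must not be made to drag ``stray'' elements along — for, summed over all $e$, such strays could spoil $X_U|_m X_V$. Proving that fresh, legally mergeable material is always available to the $P$-strategies (so that they need never touch a higher-priority follower or an element already committed to $X_U$), while every element they do consume is committed out of $X_U$ once and for all, is the one point that requires genuine care; granting it, the remaining interactions are precisely those already handled in Theorem~\ref{thm:mutually-dark}.
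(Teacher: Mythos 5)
Your proposal is correct and follows essentially the same route as the paper's own proof: a finite-injury construction with finiteness requirements for the non-distinguished classes, anti-transversal requirements that fix a finite oracle string $\tau\subset c_{V}$ (resp.\ $c_{U}$) and collapse two of its outputs in the other relation, and Friedberg-style diagonalization requirements making the two distinguished classes $m$-incomparable (the paper's $F$-, $P$-, and $I$-requirements). The only differences are cosmetic -- you reserve computable tracks $Z_U,Z_V$ and enumerate $X_U,X_V$ monotonically, while the paper takes $X_U=[0]_U$, $X_V=[0]_V$ and controls them via equivalence-string restraints reset on initialization, obtaining genuinely $\omega$-c.e.\ approximations -- and the points you leave implicit (restraining the oracle string $\tau$ itself, and protecting the classes of track elements not absorbed into $X_U$) are exactly the restraint bookkeeping you import from Theorem~\ref{thm:mutually-dark}.
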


\begin{proof}
We build equivalence relations $U, V$ so that the equivalence classes
$X_U:=[0]_U$ and $X_V:=[0]_V$ are such that $X_U |_m X_V$, all other classes
are finite, and $U,V$ are mutually dark.

\subsubsection*{Requirements and strategies}

To achieve our goals, we build $U$ and $V$ to satisfy the following
requirements, for every $e \in \omega$:

\begin{align*}
F^V_e:      &\; \text{$e \cancel{\rel{V}} 0 \Rightarrow [e]_V$
                           finite,}\\
F^U_e:      &\; \text{$e \cancel{\rel{U}} 0 \Rightarrow [e]_U$
                           finite,}\\
P_e^V:      &\; \text{$W_e^V$ is not an infinite transversal
                           for $U$,}\\
P_e^U:      &\; \text{$W_e^U$ is not an infinite transversal
                           for $V$,}\\
I^{U,V}_e:  &\; \text{$\phi_e$ does not $m$-reduce $[0]_U$ to $[0]_V$,}\\
I^{V,U}_e:  &\; \text{$\phi_e$ does not $m$-reduce $[0]_V$ to $[0]_U$}.
\end{align*}
The priority ordering $<$ of the requirements is given by
\[
\cdots < F^V_e< F^U_e < P_e^V < P_e^U
< I^{U,V}_e < I^{V,U}_e < \cdots .
\]
As in the proof of Theorem~\ref{thm:mutually-dark} we say that a requirement
$R$ has priority position $e$, if $e$ is the position of $R$ in the above
ordering: the priority position of $R$ will be denoted by $e_{R}$.

The strategies for these requirements will turn out to be finitary: each
strategy will act only finitely often, and when acting it will modify only a
finite amount of $U$ and $V$. The strategies will be similar to the ones
in the proof of Theorem~\ref{thm:mutually-dark} with the simplification that
we do not have to worry about limiting too much the number of changes in
the characteristic functions of $U$ and $V$.

More precisely, as in the proof of Theorem~\ref{thm:mutually-dark} at any
stage of the construction a requirement $R$ will inherit by the higher
priority requirements two finite binary strings $r^-_{R,U}$ and $r^-_{R,V}$
(these strings are empty if $R$ is the highest priority requirement) which
depend of course on the stage, and are initial segments of the current
approximations of the characteristic functions of $U$ and $V$, respectively:
these strings are the \emph{restraints} which the strategy for $R$ is bound
to preserve. In turn, $R$ will provide its own restraints, i.e. strings
$r_{R,U} \supseteq r^-_{R,U}$ and $r_{R,V} \supseteq r^-_{R,V}$ which lower
priority requirements are bound to preserve.

If $\sigma$ is a finite binary string then we say that $\sigma$ is an
\emph{equivalence string} if the set of pairs $\{(x,y): \sigma(\langle x,y
\rangle) \downarrow) =1\} \cup \Id$ is an equivalence relation: call
$T_{\sigma}$ this equivalence relation. Clearly, $T_{\sigma}$ is an
equivalence relation in which all equivalence classes are finite, and almost
all of them are singletons.

\smallskip

\emph{Strategy for $F^V_e$ and $F^U_e$}. If $T \in \{U,V\}$ then the strategy
for $R=F^T_e$ consists in not allowing lower priority requirements to
$T$-collapse any number to $e$ if at the given stage $e \cancel{\rel{T}} 0$.
\emph{Outcome}: If eventually $e \cancel{\rel{T}} 0$ then $[e]_T$ is finite
as only higher priority requirements can add numbers to this equivalence
class, but each such requirement acts only finitely often. The restraint
strings imposed by the requirement are $r_{R,U}$ and $r_{R,V}$ which equal
the least equivalence strings of length $\ge e_{R}$ which extend
$r^{-}_{R,U}$ and $r^{-}_{R,V}$, respectively.

\smallskip
\emph{Strategy for $P^V_e$ and $P^U_e$}. Consider the case $R=P^V_e$, the
other one being similar. The strategy for $P^V_e$ works within the restraints
imposed by $r^-_{R,U}$ and $r^-_{R,V}$. If $W_e^V$ hits infinitely many
$U$-classes (which is the case if $W_e^V$ is an infinite transversal of $U$)
then by finiteness of the restraints we can pick $a,b$, with $a \ne b$, such
that $a,b$ are not so restrained from $U$-collapsing, and such that we can
redefine $V$ respecting higher priority restraints so that $a, b\in W_e^V$.
More precisely: $R$ waits for numbers $a,b$ and an equivalence string $\sigma
\supseteq r^-_{R,V}$ such that $a \ne b$, $a,b \in W_e^\sigma$ and $a, b
\cancel{\rel{U}} i$ for every $i\leq e_{R}$. \emph{Action}: If and when the
wait is over, requirement $R$ $U$-collapses $a,b$, sets $r_{R,V}$ to be the
least such equivalence string $\sigma$, and $r_{R,U}$ to be the least
equivalence string extending $r^{-}_{R,U}$ so that $r_{R,U}(\langle a, b
\rangle\downarrow) = 1$. \emph{Outcomes}: If the strategy awaits forever,
then $W_e^V$ hits only finitely many $U$-classes, and thus is not a
transversal for $U$; otherwise the action guarantees that $W_e^V$ is not a
transversal for $U$.

\smallskip

\emph{Strategy for $I^{U,V}_e$ and $I^{V,U}_e$}. Let us consider only the
case $R=I_{e}^{U,V}$, the other one being similar. The strategy \emph{acts a
first time} by appointing a new witness $x_{R}$ (being new, it is not  in any
of the finitely many $U$-classes that are restrained by higher priority
requirements, nor is in the current $[0]_U$); waits for $\phi_e(x_{R})$ to
converge, meanwhile restraining $x_{R}$ out of $[0]_U$ (setting $r_{R,U}$ to
be the least equivalence string extending $r^-_{R,U}$ so that
$r_{R,U}(\langle 0, x_{R} \rangle)\downarrow =0$). \emph{Outcome of the first
action}: If the strategy waits forever then $\phi_e$ is not total and thus
the requirement is satisfied. If $\phi_e(x_{R})$ converges then $R$
\emph{acts a second time}: if already $r^-_{R,V}(\langle 0,
\phi_e(x_{R})\rangle) \downarrow =1$ then it keeps the same $r_{R,U}$, and
sets $r_{R,V}=r^-_{R,V}$; otherwise restrains $\phi_e(x_{R})\notin [0]_V$,
and $U$-collapses $0$ and $x_{R}$ (setting $r_{R,U}$ to be the least
equivalence string extending $r^-_{R,U}$ so that $r_{R,U}(\langle 0,
x_{R}\rangle)\downarrow =1$ and $r_{R,V}$ to be the least equivalence string
extending $r^-_{R,V}$ so that $r_{R,V}(\langle 0, \phi_e(x_{R})
\rangle)\downarrow =0$). \emph{Outcome of the second action}: The outcome is
a straightforward diagonalization showing that $\phi_e$ fails to be a
reduction on input $x_{R}$.

\smallskip

As we see, at each stage each requirement $R$ contributes, through $r_{R, U}$
and $r_{R, V}$, finite initial segments to the characteristic functions of
the current approximations to $U$ and $V$, respectively. We will see that
eventually these initial segments stabilize in the limit, and if $S$ has
lower priority than $R$ then the limit strings proposed by $S$ extend those
proposed by $R$: the characteristic functions of the final $U$ and $V$ will
be the unions of these limit finite initial segments proposed by the
requirements.

\smallskip
When we \emph{initialize} a requirement $R$ at a stage $s$ we set as
undefined its parameters $x_{R}$ and its restraints $r_{R,U}$ and
$r_{R,V}$. An $F$-requirement \emph{requires attention} at $s+1$ if it is
initialized; a $P$-requirement \emph{requires attention} at $s+1$ if it is
initialized, or has not as yet acted after its last initialization but is now
ready to act, i.e. we have found suitable $a,b, \sigma$. An $I$-requirement
\emph{requires attention} at $s+1$ if it is initialized, or has not as yet
acted after its last initialization but it is now ready to act either by acting a
first time, or by acting a second time.

\subsubsection*{The construction}
The construction is in stages. Unless otherwise specified at each stage each
parameter keeps the same value as at the previous stage. We will often omit
to mention the stage to which a given parameter is referred, if this is clear
from the context.

\smallskip

\emph{Stage $0$}. Initialize all requirements, and let $U[0]=V[0]:=\Id$.

\smallskip

\emph{Stage $s+1$}. Let $R$ be the highest priority requirement requiring
attention at $s+1$: notice that such an $R$ exists since almost all
requirements are initialized, and thus requiring attention.

We distinguish the various possible cases for $R$, where $T \in \{U,V\}$.

\smallskip
$R=F_e^T$: Set $r_{R,U}$ and $r_{R,V}$ to be the least equivalence strings of
length $\ge e_{R}$, which extend $r^-_{R,U}$ and $r^-_{R,V}$, respectively.

\smallskip
$R=P_e^T$:
\begin{enumerate}
\item If $R$ is initialized then set $r_{R,U}=r^-_{R,U}$ and
    $r_{R,V}=r^-_{R,V}$;

\item otherwise we find suitable $a,b, \sigma$: act as in the description
    of the strategy, picking the least such suitable triple and setting new
    suitable values for $r_{R,U}$ and $r_{R,V}$;
\end{enumerate}

\smallskip
$R=I_e^{U,V}$ or $R=I_e^{V,U}$:
\begin{enumerate}
\item If $R$ is initialized then define a new witness $x_{R}$, and act as
    in the description of the strategy by setting new suitable values for
    $r_{R,U}$ and $r_{R,V}$;

\item otherwise $\phi_e(x_{R})$ has already converged, so act as in the
    description of the strategy and setting new suitable values for
    $r_{R,U}$ and $r_{R,V}$.

\end{enumerate}

Let $U[s+1]:=T_{r_{R,U}}$, $V[s+1]:=T_{r_{R,V}}$. Initialize all strategies
of lower priority than $R$ and go to next stage.

At the end of the construction, $U$ and $V$ are the relations formed by the
pairs which appear at co-finitely many stages in the approximations  $U[s]$
and $V[s]$.

\subsubsection*{The verification}
We show by induction on the priority ordering that each requirement requires
attention (and thus acts) only finitely often and it is eventually satisfied.

Suppose that the claim is true of every requirement $R'<R$, and let $t$ be
the least stage after which no $R'<R$ requires attention. It is now immediate
to see that  after $t$, $R$ requires attention only finitely many times. It
requires attention a first time at $t$ because it is still initialized, and
then: if $R$ is an $F$-requirement then it will never require attention
again; if $R$ is a $P$-requirement then it may require attention at most once
more if later finds suitable $a,b$ and $\sigma$; finally if $R$ is an
$I$-requirement trying to achieve that $\phi_{e}$ is not a reduction, then it
may require attention at most once more again if $\phi_{e}(x_{R})$ converges.
This shows that there is a least stage $t_{R}$ after which neither any $R'<R$ nor
$R$ requires attention.

Next, we show that for every $R$, with priority position $e_{R}$, the number
of equivalence classes $[i]_{T}$, for $i < e_{R}$ and $T\in \{U,V\}$, does
not change after $t_{R}$, nor does any such equivalence class, except for
$[0]_{T}$, changes after this stage. This is so because no requirement of
priority position $j \ge e_{R}$ can add numbers to $[i]_{T}$ if $i < e_{R}$
and $i\cancel{\rel{T}} 0$,  due to the restraints imposed by the
$F$-requirements of priority position $<e_{R}$: these restraints are
respected by the lower priority $P$-requirements as after $t_{R}$ they may
collapse only numbers currently not in these equivalence classes (nor in
$[0]_{T}$ as well), and by the $I$-requirements as by initialization they
pick their witnesses not in the current approximations to these equivalence
classes. This claim shows also that the $F$-requirements are satisfied.

In the other cases, the last action performed by $R$ at the last stage at
which it requires attention achieves satisfaction of $R$, as is clear by the
description of the strategies given before the construction. We verify this
in more detail only for the $P$-requirements, leaving to the reader the other
cases. Suppose $R=P^{U}_{e}$, the case $R=P^{V}_{e}$ being similar. Assume
that $W_{e}^{V}$ is an infinite transversal for $U$. Then since the number of
equivalence classes $[i]_{T}$, for $i < e_{R}$ and $T\in \{U,V\}$, does not
change after $t_{R}$ and is finite, there is a stage $t \ge t_{R}$ at which
$W_{e}^{\sigma}$, with $\sigma \subset c_{V}$, enumerates two distinct
numbers $a,b$ not in any of these equivalence classes. But any finite string
which is an initial segment of $c_{V}$ is an equivalence string, so we can
certainly find at $t$ an equivalence string $\sigma \supseteq r^-_{R,V}$ with
$a, b \in W_{e}^{\sigma}$ at $t$, so that $R$'s action (if $R$ is still
active) guarantees that $a \rel{U} b$ and $a, b \in W_{e}^V$, contradicting
that $W_{e}^{V}$ is a transversal.

We finally show that each $T \in \{U, V\}$ is $\omega$-c.e.\,. Let $u \in
\omega$, and let $R$ be the least $F$-requirement (say $R= F^T_e$) with
$e_{R}> u$. The last time $R$ requires attention it sets a restraint $r_{R,
T}$ of length $> u$ which hereinafter will be an initial segment of the
approximation to the final characteristic function of $T$, and thus the value
$c_T(u)$ will never change again. This shows that the number of possible
changes of $c_T(u)$ is bound by the number of times $R$ is initialized, which
is bound by the number of times requirements $R' < R$ act. Since such a
requirement $R' < R$ can act at most twice after its last initialization,
this shows that the number of possible changes of $c_T(u)$ is bound by
$2^{e_{R}}$, where $e_{R}$ can be effectively computed from $u$. In
conclusion $T$ is $\omega$-c.e.\,.
\end{proof}

We conclude the paper with the following open question.

\begin{question}
For which $a \in \mathcal{O}$, there exist equivalence relations properly in
$\Sigma^{-1}_a$ with $\sup$ in $\Sigma^{-1}_a$?
\end{question}

\medskip

\subsection*{Acknowledgements}
Part of the research contained in this paper was carried out while Bazhenov,
Yamaleev, and San Mauro were visiting the Department of Mathematics of
Nazarbayev University, Astana. These authors wish to thank Nazarbayev
University for its hospitality.

\bibliographystyle{plain}


\end{document}